\documentclass[11pt]{amsart}

\usepackage{graphicx}
\usepackage[english]{babel}
\usepackage[T1]{fontenc}
\usepackage[latin1]{inputenc}
\usepackage{amsfonts}
\usepackage{amssymb}
\usepackage{amsthm}
\usepackage{amsmath}
\usepackage{tikz}
\usepackage{float}
\usepackage{enumerate}
\usepackage{accents}
\usepackage{mathtools}
\usepackage{mathrsfs}
\usepackage{comment}
\usepackage{afterpage}
\usepackage{bbm}
\usepackage{dsfont}
\usepackage{stmaryrd}
\usepackage{hyperref}
\usepackage{mleftright}
\usepackage{subfig}
\usepackage{soul}
\usepackage{tikz-cd} 
\usepackage{overpic}

\theoremstyle{plain}
\newtheorem{thm}{Theorem}[section]
\newtheorem{lem}[thm]{Lemma}
\newtheorem{prop}[thm]{Proposition}
\newtheorem{cor}[thm]{Corollary}
\newtheorem*{thm*}{Theorem}
\newtheorem*{prop*}{Proposition}
\newtheorem*{cor*}{Corollary}
\newtheorem{thmintro}{Theorem}

\newtheorem{propintro}[thmintro]{Proposition}

\theoremstyle{definition}
\newtheorem{defn}[thm]{Definition}
\newtheorem{ex}[thm]{Example}
\newtheorem{rmk}[thm]{Remark}
\newtheorem*{rmk*}{Remark}
\newtheorem{conj}{Conjecture}
\newtheorem{quest}[conj]{Question}
\newtheorem*{quest*}{Question}

\newtheorem*{defn*}{Definition}

\renewcommand{\o}{\circ}

\newcommand{\R}{\mathbb{R}}
\newcommand{\Z}{\mathbb{Z}}
\newcommand{\N}{\mathbb{N}}

\newcommand{\s}{\sigma}
\newcommand{\ra}{\rightarrow}

\newcommand{\cu}{\subseteq}
\newcommand{\g}{\gamma}
\newcommand{\G}{\Gamma}

\newcommand{\mc}{\mathcal}
\newcommand{\mf}{\mathfrak}
\newcommand{\x}{\times}
\newcommand{\eps}{\epsilon}
\newcommand{\Om}{\Omega}

\newcommand{\Aut}{\mathrm{Aut}}

\newcommand{\acts}{\curvearrowright}

\newcommand{\mscr}{\mathscr}

\newcommand{\CAT}{{\rm CAT(0)}}

\newcommand{\Out}{\mathrm{Out}}

\DeclareMathOperator{\diam}{diam} 
\DeclareMathOperator{\supp}{supp}
\DeclareMathOperator{\comp}{comp}
\renewcommand{\L}{\Lambda}
\newcommand{\parclass}{\mathrm{Para}}
\newcommand{\cmp}{\mathrm{Cmp}}
\newcommand{\trans}{\mathrm{Trans}}

\begin{document}

\title{Deforming cubulations of hyperbolic groups}
\author[E. Fioravanti]{Elia Fioravanti}\address{Max Planck Institute for Mathematics, Bonn, Germany}\email{fioravanti@mpim-bonn.mpg.de} 
\author[M. Hagen]{Mark Hagen}\thanks{Hagen was supported by EPSRC grant EP/R042187/1.}\address{School of Mathematics, 
University of Bristol, Bristol, UK}
\email{markfhagen@posteo.net}

\begin{abstract}
We describe a procedure to deform cubulations of hyperbolic groups by ``bending hyperplanes''. Our construction 
is inspired by related constructions like Thurston's Mickey Mouse example \cite[Example~8.7.3]{Thurston}, walls in fibred 
hyperbolic $3$--manifolds~\cite{Dufour} and free-by-$\mathbb Z$ groups~\cite{Hagen-Wise2}, 
and Hsu--Wise \emph{turns} \cite[Definition~4.1]{Hsu-Wise}.

As an application, we show that every cocompactly cubulated Gromov-hyperbolic group admits a proper, cocompact, essential 
action on a $\CAT$ cube complex with a single orbit of hyperplanes.  This answers (in the negative) a question of Wise 
\cite[Problem~5.2]{Wise-antenna}, who proved the result in the case of free groups. 

We also study those cubulations of a general group $G$ that are not susceptible to trivial deformations.  We name these 
\emph{bald cubulations} and observe that every cocompactly cubulated group admits at least one bald cubulation. We then 
apply the hyperplane-bending construction to prove that every cocompactly cubulated hyperbolic group $G$ admits infinitely 
many 
bald cubulations, provided $G$ is not a virtually free group with $\Out(G)$ finite. By contrast, we show that the Burger--Mozes examples each admit a unique bald cubulation.
\end{abstract}

\maketitle

\setcounter{tocdepth}{1}
\tableofcontents

\section{Introduction.}

The theory of group actions on $\CAT$ cube complexes, and in particular its applications to 
$3$--manifolds~\cite{Agol,Wise-qch}, has recently exerted a large influence in group theory and topology.  ``Cubulating'' a 
group --- constructing a proper action on a $\CAT$ cube complex, usually via the method introduced by Sageev in~\cite{Sag95} 
--- reveals a great deal about the group's structure.  This is particularly true when the group $G$ is hyperbolic: in this 
case, when the \emph{codimension--$1$ subgroups} used to cubulate the group are quasiconvex, the action is 
also cocompact~\cite{Sag97,Niblo-Reeves,Hruska-Wise-finiteness}; we say $G$ is \emph{cocompactly cubulated} if it acts 
properly and cocompactly on a $\CAT$ cube complex.  In this case, work of Agol~\cite{Agol} and 
Haglund--Wise~\cite{Haglund-Wise-GAFA} shows that such a hyperbolic group $G$ has many useful properties, e.g.\ 
$\Z$--linearity, separability of quasiconvex subgroups, etc.

Many of the procedures for cubulating hyperbolic groups $G$ arising in nature make it clear that cubulations of $G$ are 
non-canonical and proving their existence is often non-constructive.  Proofs that a given $G$ is cubulated often proceed as 
follows.  First, one describes a general procedure for finding quasiconvex codimension--$1$ subgroups in $G$.  Then, one 
shows that any two points in the Gromov boundary of $G$ can be separated by the limit set of some coset of a codimension--$1$ 
subgroup of the given type: one constructs a particular subgroup ``targeted'' at the given pair of boundary points.  Then one 
applies a theorem of Bergeron--Wise~\cite{Bergeron-Wise}, relying on a compactness argument, to extract a finite collection 
of codimension--$1$ subgroups that suffice to ensure a proper action on a $\CAT$ cube complex.  

For example, when $G$ is the fundamental group of a hyperbolic $3$--manifold $M$, 
the codimension--$1$ subgroups can be taken to be fundamental groups
of quasi-Fuchsian surfaces immersed in $M$. The work of Kahn and Markovic \cite{Kahn-Markovic} 
shows that these are enough to separate any two points in the boundary of $G$.

While a lot of information about $G$ can be gleaned from the mere fact of it being cubulated, one usually does not know 
much about the specific cube complex.  It is therefore natural to want some sort of ``space of all cocompact cubulations'' 
of a given hyperbolic group $G$.

One way to proceed is by analogy to deformation spaces of actions on trees, introduced by Forester in~\cite{Forester}.  
There, one considers all \emph{minimal} actions of $G$ on trees in which the set of elliptic subgroups is held fixed.  In our setting, 
one might wish to consider all of the proper, cocompact actions of $G$ on $\CAT$ cube complexes.  The right notion of a ``minimal'' 
action on a $\CAT$ cube complex $X$ should at least include the requirement that there is no $G$--invariant convex 
subcomplex, so one should restrict to actions that are \emph{essential} in the sense of Caprace--Sageev~\cite{CS}; this avoids 
distractions like attaching a leaf edge to each vertex, or taking the product of $X$ with a finite cube complex.  A result 
in~\cite{CS} makes this a safe restriction, since any cocompact cubulation can be replaced with an essential one without 
really changing much.

However, one should also impose some additional restrictions that are best illustrated by considering the simple case where 
$G=\Z$.  The most obvious cubulation, the action by translations on the tiling of $\R$ by $1$--cubes, seems intuitively 
better than the action on the cube complex obtained by, say, stringing together countably many squares, with each 
intersecting the next in a single vertex.  Both of these cubulations are essential, and in both cases the 
hyperplane-stabilisers are trivial, but only in the first case is the action of $\{1\}$ on each hyperplane the ``right'' 
cubulation of the trivial group.

So, we ask that $X$ is \emph{hyperplane-essential}: every hyperplane-stabiliser acts essentially on its hyperplane.  This, 
too, turns out to be reasonable, in the sense that, if $G$ admits a proper, cocompact action on a $\CAT$ cube complex, then 
it admits a proper, cocompact, hyperplane-essential action~\cite{Hagen-Touikan}.  Passing to a hyperplane-essential action 
is somewhat more violent than making an action essential, since it seriously changes which subgroups are 
hyperplane-stabilisers.

\begin{rmk*}[Hyperplane-essentiality]
Essentiality and hyperplane-essentiality are defined precisely in Section~\ref{sec:prelim}. When $G$ is the fundamental 
group of a hyperbolic $3$--manifold, the cubulations provided by using Kahn--Markovic surfaces are automatically essential 
and hyperplane-essential (see Remark~\ref{automatic hyperplane-essentiality}).  

There are also stronger conditions that one 
might want to impose on a cubulation $G\acts X$.  For example, $X$ has \emph{codimension--$k$} hyperplanes, each of which is 
the intersection of $k$ pairwise-transverse hyperplanes; one could ask that the stabiliser of each hyperplane of each 
codimension acts essentially on it.  This condition is strictly stronger than hyperplane-essentiality (which imposes restrictions 
only on codimension--$1$ hyperplanes) and is equivalent to $X$ having no free faces; equivalently, the CAT(0) metric has the 
\emph{geodesic extension property}~\cite[Proposition II.5.10]{BH}.  However, it is not at all clear whether one can pass from an 
arbitrary cocompact cubulation to a cubulation with no free faces, so we work with hyperplane-essential actions instead of 
the stronger version.
\end{rmk*}

Restricting to proper, cocompact, essential, hyperplane-essential actions seems to be reasonable for the purpose of 
considering the ``space of cubulations'' of $G$ because of the following theorem of Beyrer and the first author~\cite{BF2}. 
For hyperbolic groups $G$ acting on cube complexes $X$ with the above properties, the action $G\acts X$ is completely 
determined, up to $G$--equivariant cubical isomorphism, by the \emph{length function} $\ell_X\colon G\to\N$.  
This is the function $\ell_X(g)=\inf_{x\in X}d(x,gx)$, where $d$ is the $\ell_1$ 
metric on $X$.  Throughout this paper, we will say that cubulations $G\acts X,G\acts Y$ are \emph{equivalent} 
if there exists a $G$--equivariant cubical isomorphism $X\to Y$.

This suggests a natural topology for the space of 
such cubulations~\cite{BF1}.  First, we allow ourselves to continuously vary the $\ell_1$ metric on $X$ by replacing the 
cubes by \emph{cuboids} --- the side-lengths need not be $1$, and we can continuously vary length functions by rescaling 
edges (always assigning the same length to parallel edges).  From this point of view, passing from $X$ to a cubical 
subdivision --- equivariantly replacing each hyperplane with several parallel copies but keeping the metric fixed --- has no effect 
on the length function, which now takes values in $\R_{\ge0}$.  Regarding each geometric, essential, 
hyperplane-essential action on a $\CAT$ cuboid complex as a length function gives a map from the set of such cubulations of 
$G$ to $\R^G-\{0\}$, which we equip with the product topology.  One can also projectivise, regarding as equivalent 
any two cubulations inducing homothetic length functions.  

This suggests that we should not consider cubulations $G\acts X,G\acts Y$ essentially different if they admit a common 
subdivision.  This motivates us to consider only cubulations in which no two halfspaces are at finite Hausdorff distance
(see the notion of ``bald'' cubulation in Definition~\ref{defn:bald} below).

In this paper, we concern ourselves with deformations of a cubulation $G\acts X$, i.e.\ with moving around in the space of 
cubulations.  We leave discussion of the subject from the point of view of the above topology for later work.  
Instead, we are concerned with a much more basic question: 
\begin{quest*}
\emph{For which (hyperbolic) cocompactly cubulated groups $G$ is the space of 
essential, hyperplane-essential cubulations infinite, even up to subdivision?}  
\end{quest*}

One way to move in the space of cubulations is using the action of $\Out(G)$; this varies the $G$--action, but not the 
underlying cube complex.  It is not hard to show that, if $\Out(G)$ is infinite, then the existence of a cocompact 
cubulation of $G$ ensures that there are infinitely many with the above properties. However, a common 
situation 
is where $G$ is a one-ended hyperbolic group that does not admit any two-ended splittings, so $\Out(G)$ 
is finite.  

In general, one needs to vary the cube complex as well as the action.  In this paper, we \emph{bend hyperplanes} to 
transform one cubulation into another and answer the above question.

\subsection*{Bending hyperplanes}
Let $G$ be a one-ended hyperbolic group acting properly and cocompactly on an essential, hyperplane-essential $\CAT$ cube 
complex $X$.  We now describe the \emph{bending} procedure for deforming $G\acts X$ into a new cubulation.  

The idea 
is to produce, given the hyperplanes of $X$, a new \emph{crooked hyperplane} $\mc{C}$ built from pieces of old hyperplanes. 
Each piece is obtained from some hyperplane $\mf{u}\subseteq X$ by cutting $\mf{u}$ along a family $\{\mf{w}_i\}$ of pairwise-disjoint hyperplanes 
that intersect it. Each $\mf{w}_i$ is itself cut along a family of hyperplanes containing $\mf{u}$, and also contributes a piece to the crooked hyperplane.
Finally, the various pieces are glued along their boundaries, which are codimension--$2$ hyperplanes in $X$. This is depicted in Figure~\ref{fig:crooked}.

\begin{figure}
\begin{tikzpicture}
\draw [line width=0.025cm] (-2.5,0) -- (-0.5,0);
\draw [dashed] (-1.5,-1) -- (-1.5,1);
\draw [-Latex] (-0.25,0) -- (0.25,0);
\draw [dashed] (1.5,0) -- (0.5,0);
\draw [dashed] (1.5,-1) -- (1.5,0);
\draw [line width=0.05cm,blue] (2.5,0) -- (1.5,0);
\draw [line width=0.05cm,blue] (1.5,0) -- (1.5,1);
\node [above right] at (-1.5,-1) {$\mf{w}_i$}; 
\node [above right] at (-2.5,0) {$\mf{u}$};
\node [above right] at (1.5,-1) {$\mf{w}_i$}; 
\node [above right] at (0.5,0) {$\mf{u}$};
\end{tikzpicture}
\begin{tikzpicture}
\draw [opacity=0] (-2.5,0) -- (-0.5,0);
\draw [-Latex] (-0.25,0) -- (0.25,0);
\draw [-Latex] (-2.25,0) -- (-1.75,0);
\draw [line width=0.05cm,blue] (1,-.5) -- (2,-.5);
\draw [line width=0.05cm,blue] (1,-.5) -- (.5,.5);
\draw [line width=0.05cm,blue] (2,-.5) --  (2.5,.5);
\draw [line width=0.05cm,blue] (.5,.5) -- (.75,1);
\draw [line width=0.05cm,blue] (2.5,.5) -- (2.25,1);
\draw [dashed] (.5,-.5) -- (2.5,-.5);
\draw [dashed] (.25,1) -- (1.25,-1);
\draw [dashed] (1.75,-1) --  (2.75,1);
\node [right] at (2.25,0) {\textcolor{blue}{\pmb{$\mc{C}$}}};
\node [] at (-1,0) {$\dots$};
\node [below left] at (2.75,-.5) {$\mf{u}$};
\node [below left] at (.35,1) {$\mf{w}_i$};
\node [below right] at (2.65,1) {$\mf{w}_j$};
\end{tikzpicture}
\caption{Bending hyperplanes into a crooked hyperplane.}
\label{fig:crooked}
\end{figure}

Since $G$ is hyperbolic, results of Agol~\cite{Agol} and Haglund--Wise~\cite{Haglund-Wise-GAFA} imply that 
hyperplane-stabilisers are separable.  This allows one to choose the pieces so that their bounding codimension--$2$ 
hyperplanes are all far apart, which in turn allows one to produce a crooked hyperplane that is $2$--sided and quasiconvex 
in $X$. If the pieces are constructed equivariantly with respect to a finite-index subgroup of $G$, then the crooked hyperplane is 
also acted upon cocompactly by its stabiliser.  Hence each crooked hyperplane corresponds to a 
quasiconvex codimension--$1$ subgroup of $G$, along with a specified wall; applying Sageev's construction~\cite{Sag95,Sag97} 
yields a cocompact, essential $G$--action on a new cube complex $Y$ with a single orbit of hyperplanes.  

With a bit more 
care, we can do the bending in such a way that every infinite-order $g\in G$ has its axis cut by some translate of the 
crooked hyperplane, ensuring, by~\cite{Bergeron-Wise}, that the $G$--action on $Y$ is proper.

As an application of the bending procedure, we can therefore answer a question asked by Wise in~\cite[Problem 
5.2]{Wise-antenna}:

\begin{thmintro}\label{one orbit intro}
Let $G$ be a Gromov-hyperbolic group that admits a proper, cocompact action on a $\CAT$ cube complex.  Then there exists a 
$\CAT$ cube complex $X$ on which $G$ acts properly, cocompactly, and essentially with a single orbit of hyperplanes.
\end{thmintro}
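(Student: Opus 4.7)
The plan is to start from a proper, cocompact, essential, hyperplane-essential cubulation $G \acts X$, which exists by the Hagen--Touikan upgrade cited in the introduction, and to apply the hyperplane-bending construction to produce a single crooked hyperplane $\mc{C} \subseteq X$ whose $G$--translates serve as the walls of a new cubulation. I will focus on the main case in which $G$ is one-ended, the setting of the bending construction; the multi-ended (virtually free) case reduces to Wise's result from~\cite{Wise-antenna}.

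The crooked hyperplane $\mc{C}$ should be built to combine pieces of representatives of every $G$--orbit of hyperplanes of $X$. Using quasiconvexity of hyperplane-stabilisers and their separability in $G$ (Agol~\cite{Agol}, Haglund--Wise~\cite{Haglund-Wise-GAFA}), one can pass to a finite-index subgroup and take the codimension--$2$ gluing loci very far apart in $X$. This ensures that $\mc{C}$ is two-sided and quasiconvex in $X$, and that the $\mathrm{Stab}_G(\mc{C})$--action on $\mc{C}$ is cocompact. Moreover, because every original orbit of hyperplanes contributes pieces, by choosing those pieces suitably one can guarantee that for every infinite-order element $g \in G$ some $G$--translate of $\mc{C}$ separates the two endpoints of the axis of $g$ in $\partial_\infty G$.

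With $\mc{C}$ in hand, the conclusion follows from standard cube-complex machinery. The subgroup $H := \mathrm{Stab}_G(\mc{C})$ is a quasiconvex codimension--$1$ subgroup equipped with a chosen wall, so Sageev's construction~\cite{Sag95,Sag97} applied to the single $G$--orbit of walls $\{g\mc{C}\}_{g \in G}$ produces a $\CAT$ cube complex $Y$ carrying a $G$--action with precisely one orbit of hyperplanes. Essentiality follows from the fact that each half of $\mc{C}$ penetrates deeply into $X$. Cocompactness follows from the Sageev--Niblo--Reeves framework combined with~\cite{Hruska-Wise-finiteness}, using quasiconvexity of $H$. Properness follows from the Bergeron--Wise criterion~\cite{Bergeron-Wise}, using the axis-separation property of $\mc{C}$.

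The main obstacle is the compression of data: bending collapses many orbits of hyperplanes of $X$ into a single orbit in $Y$, so one must arrange the pieces making up $\mc{C}$ so that no essential separation is lost. Guaranteeing the axis-separation condition for \emph{every} infinite-order element, while simultaneously keeping $\mc{C}$ two-sided, quasiconvex, and stabilised cocompactly, is the delicate combinatorial task; the separability of quasiconvex subgroups is what provides the flexibility needed to push the gluing loci deep enough for all these conditions to hold at once.
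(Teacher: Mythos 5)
Your one-ended argument is a correct high-level sketch that matches the paper's approach: bending hyperplanes via a full $n$--system of switches, using separability to push the gluing loci (codimension--$2$ intersections) far apart, obtaining a quasiconvex, two-sided crooked hyperplane with cocompact stabiliser, and invoking the machinery of Proposition~\ref{cubulating with abstract hyperplanes} (via \cite{Sag95,Sag97,Hruska-Wise-finiteness,Bergeron-Wise}) to get the desired cubulation. You correctly identify that the delicate point is arranging, via Proposition~\ref{skewering}, that every infinite-order element has its limit points separated by some translate of the crooked hyperplane.

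However, there is a genuine gap in your reduction for groups with more than one end. You write that ``the multi-ended (virtually free) case reduces to Wise's result from~\cite{Wise-antenna}.'' This conflates two things. A hyperbolic group with infinitely many ends need \emph{not} be virtually free: by Stallings--Dunwoody, it is a finite graph of groups with finite edge groups in which the vertex groups are finite or one-ended, and the one-ended vertex groups can certainly be infinite (e.g.\ the free product of two surface groups is infinitely-ended but far from virtually free). Wise's antenna construction in~\cite{Wise-antenna} only applies to free groups, so it covers neither these ``mixed'' groups nor even virtually free groups with torsion directly. The paper's Section~\ref{infinitely-ended sect} is devoted precisely to this: it builds an orbicomplex $\overline X$ adapted to the Dunwoody decomposition, constructs an antenna subtree $\mc A$ of the Bass--Serre tree $\mc T$, produces crooked hyperplanes $U_i$ in each one-ended vertex space $X_i$, and then assembles a single wall $U\subseteq X$ by gluing the $U_i$ to the antenna via connecting paths $\beta_i$; properness then requires a case analysis (Lemma after Lemma~\ref{labelling of components}) for elements acting loxodromically on $\mc T$ versus those fixing a vertex. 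Additionally, for the purely virtually-free case with torsion, one needs Lemma~\ref{supergroups} to pass from a torsion-free finite-index subgroup back to $G$; this is not automatic either. Your proposal as written leaves the non-one-ended, non-free-group case entirely unaddressed.
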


The above theorem was proved by Wise in the case where $G$ is a free group. He used an ingenious \emph{antenna} 
construction to produce a codimension--$1$ subgroup $H$ of $G$, and an associated $H$--wall, so that the 
action on the resulting dual cube complex has the claimed properties. In fact, Wise goes 
considerably further in the free group case: his construction shows that one can choose $H$ to be an \emph{arbitrary} 
finitely generated infinite-index subgroup.

As described above, our proof proceeds along completely different lines in the one-ended case, relying on bending hyperplanes. 
When $G$ is a surface group, the resulting cubulation essentially originates from a single (necessarily non-simple) filling closed curve on the surface.

In the general 
case, we split $G$ as a finite graph of groups with finite edge groups and use a hybrid technique: we apply a version of 
Wise's antenna construction to the Bass--Serre tree, apply the bending construction to the various one-ended vertex groups, 
and glue up the pieces to get the required wall.

\begin{rmk*}
In the one-ended case, our proof of Theorem~\ref{one orbit intro} actually shows more. 
Let $K\leq G$ be a hyperplane-stabiliser in a proper, cocompact action of $G$ on an essential, hyperplane-essential 
$\CAT$ cube complex. Then, for every open neighbourhood $U\cu\partial_{\infty}G$ of the limit set of $K$, the cubulation in Theorem~\ref{one orbit intro} can be chosen to have a hyperplane-stabiliser $H$ with limit set contained in $U$. The two sides of the $H$--wall can similarly be picked arbitrarily close to the two sides of the $K$--wall. 

In other words, an arbitrarily small deformation of one of the original walls always suffices to obtain a proper action
with a single orbit of hyperplanes.
\end{rmk*}

\subsection*{Bald cubulations}
Let $G$ be a (not necessarily hyperbolic) group acting properly and cocompactly on a $\CAT$ cube complex $X$.  It is not difficult to produce infinitely 
many cocompact cubulations of $G$, no two of which are equivalent.  This is because of various relatively uninteresting 
procedures:
\begin{itemize}
     \item we can cubically subdivide $X$ indefinitely (or just subdivide a $G$--orbit of hyperplanes);
     \item given a vertex $v\in X$ and a finite $\CAT$ cube complex $F$, we can $G$--equivariantly attach a copy of $F$ to 
each vertex in $G\cdot v$;
\item for every $n\geq 2$, we can \emph{breed} a $G$--orbit of hyperplanes of $X$ with an $n$--cube. This procedure is 
described in the case $n=2$ in \cite[Example~5.5]{BF2}, but the extension to a general $n$ is straightforward.
\end{itemize}

Such procedures for creating new cubulations from old ones are not very interesting because they always result in actions 
$G\acts X$ with some of the following properties:
\begin{itemize}
     \item $G\acts X$ \emph{inessentially}: there is some hyperplane $\mf w$ of $X$ and some component $\mf h$ of $X-\mf w$ 
such that each $G$--orbit intersects $\mf h$ in a set at bounded distance from $\mf w$;
\item $G$ acts \emph{hyperplane-inessentially}: there is some hyperplane $\mf w$ such that the action of 
$\mathrm{Stab}_G(\mf w)$ on $\mf w$ is inessential;
\item $X$ contains two hyperplanes $\mf w_1,\mf w_2$ that have associated 
halfspaces $\mf h_1,\mf h_2$ lying at finite Hausdorff distance from each other.
\end{itemize}

If $G$ is hyperbolic and acts properly and cocompactly on several essential $\CAT$ cube complexes $X_1,\dots,X_k$, 
there is an additional ``cheap'' procedure to create new cubulations. We can cubulate $G$ using all of the 
codimension--$1$ subgroups 
arising as hyperplane-stabilisers in the various $G\acts X_i$. Again, the resulting action $G\acts X$ can fail to be hyperplane-essential,
even if all original actions $G\acts X_i$ had this property.

As discussed above, we wish to restrict to essential actions, in light of \cite[Proposition 3.5]{CS}.  A procedure called \emph{panel 
collapse} reduces an arbitrary hyperplane-inessential cocompact cubulation to a 
hyperplane-essential one \cite{Hagen-Touikan}, so it makes sense to consider only hyperplane-essential actions.

This motivates the following definition:

\begin{defn}[Bald]\label{defn:bald}
A \emph{bald cubulation} of a group $G$ is a proper, cocompact, essential, hyperplane-essential action 
by cubical automorphisms on a $\CAT$ cube complex $X$ with the following additional property. 
Suppose that $\mf w_1,\mf w_2$ are hyperplanes of $X$ that bound halfspaces at finite Hausdorff distance.
Then there is a cubical splitting
$X=\R\x Y$ such that $\mf w_1$ and $\mf w_2$ are hyperplanes of the $\R$--factor.
\end{defn}

As an example, it follows from the Cubical Flat Torus theorem of Wood\-house--Wise~\cite{WW} that, for every bald cubulation of a free abelian group, 
the underlying $\CAT$ cube complex is isomorphic to the standard tiling of $\R^n$ with vertex set the integer lattice (Proposition~\ref{bald Z^n}).

To make an arbitrary cocompact cubulation bald, we \emph{shave} it. This is Proposition~\ref{facts about panel collapse}, the core
of which lies in \cite{Hagen-Touikan}.

\begin{propintro}\label{bald exist intro}
Every cocompactly cubulated group admits a bald cubulation.
\end{propintro}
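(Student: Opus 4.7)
The plan is to reduce an arbitrary cocompact cubulation to a bald one via a sequence of standard simplifications. Let $G\acts X_0$ be proper and cocompact. First, I would pass to a minimal $G$--invariant convex subcomplex of $X_0$ to make the action essential, using \cite[Proposition~3.5]{CS}. Next, I would apply the panel collapse procedure of \cite{Hagen-Touikan} to obtain a proper, cocompact, essential, hyperplane-essential cubulation $G\acts X_1$.

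The only remaining issue is the ``no duplicate halfspaces'' clause of Definition~\ref{defn:bald}. Suppose that $X_1$ contains hyperplanes $\mf{w}_1,\mf{w}_2$ bounding halfspaces $\mf{h}_1,\mf{h}_2$ at finite Hausdorff distance. In a $\CAT$ cube complex one shows, after swapping sides if necessary, that $\mf{h}_1\supseteq\mf{h}_2$ and that the region $\mf{h}_1\setminus\mf{h}_2$ between $\mf{w}_1$ and $\mf{w}_2$ is a cubical subcomplex isomorphic to $\mf{w}_1\x I$ for some compact interval $I$. If this strip, together with its $G$--translates, fills $X_1$, then $X_1$ admits a $G$--invariant cubical splitting $X_1=\R\x Y$ in which $\mf{w}_1,\mf{w}_2$ are horizontal, and no violation of baldness occurs. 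Otherwise, I would $G$--equivariantly collapse the strip in an operation that I call \emph{shaving}, obtaining a proper, cocompact, essential, hyperplane-essential cubulation $G\acts X_2$ with strictly fewer orbits of hyperplanes. The verification that shaving preserves all these properties is a variant of the argument in \cite{Hagen-Touikan}, since shaving is essentially panel collapse applied to the ``panels'' provided by the bridges.

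Since the number of $G$--orbits of hyperplanes of $X_0$ is finite by cocompactness and strictly decreases at each step, the procedure terminates in a bald cubulation. The main obstacle is verifying that shaving does not introduce new pairs of halfspaces at finite Hausdorff distance outside of an $\R$--factor, and that properness of the action is preserved. For properness, the key point is that $\mf{w}_1$ and $\mf{w}_2$ separate essentially the same pairs of $G$--orbits, so discarding one does not destroy proper separation. For the absence of new redundancies, I would argue that any pair of hyperplanes in $X_2$ at finite Hausdorff distance lifts to such a pair in $X_1$, and then conclude by induction on the orbit count.
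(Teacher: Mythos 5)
Your proposal follows the same two-stage outline as the paper: first use the $G$--essential core and panel collapse of \cite{CS,Hagen-Touikan} to obtain a proper, cocompact, essential, hyperplane-essential cubulation, then remove duplicate halfspaces. The first stage matches the paper exactly (including iterating panel collapse and tracking the lexicographic complexity $\#(X)$). The second stage is where you diverge, and this is where your argument is underspecified.

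You describe ``shaving'' as ``essentially panel collapse applied to the panels provided by the bridges,'' and propose to collapse one $G$--orbit of strips $\mf{w}_1\times I$ at a time, inducting on the number of hyperplane orbits. This is not how the paper proceeds, and the identification with panel collapse is misleading. Panel collapse removes a strip precisely because one of its ``long'' sides is \emph{inessential} in the ambient hyperplane; after the first stage there are no such strips left, so Theorem~A of \cite{Hagen-Touikan} is not available for the second stage, and you cannot appeal to that paper for ``the verification that shaving preserves all these properties.'' Instead, the paper constructs the bald complex in one step by passing to the pocset $\parclass(X)$ of parallelism classes of halfspaces (Lemma~\ref{structure of SP hyp} shows each class is totally ordered, and either every hyperplane not bounding a class member is transverse to all of its members or to none), and then taking the dual cube complex $\cmp(X)$ of this quotient pocset (Lemma~\ref{pocset lemma}). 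Lemma~\ref{removing SP hyp} then shows directly that $\cmp(X)$ is locally finite, cocompact, essential, hyperplane-essential, has no strongly parallel hyperplanes, and is $G$--equivariantly bi-Lipschitz to $X$ --- which gives properness, skewering control (needed for part~(2) of Proposition~\ref{facts about panel collapse}), and the impossibility of creating new parallelisms all at once, rather than by an orbit-counting induction.

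There is also a genuine gap in your handling of the $\R$--factor case. You test whether ``the strip together with its $G$--translates fills $X_1$,'' but the real dichotomy is whether the parallelism class $\mscr{P}(\mf{h}_1)$ is finite or a bi-infinite chain; the latter forces an $\R$--factor by Lemma~\ref{structure of SP hyp}(3), and the compression map $\mscr{P}\colon\mscr{H}(X)\to\parclass(X)$ then has infinite fibres, so preimages of DCC ultrafilters need not be DCC and $\iota$ need not be defined. The paper therefore first isolates \emph{all} $\R$--factors at once ($Z=\R^m\times W$) and compresses only $W$. Your iterative version would need an analogous first step, since collapsing a single orbit of strips inside a chain that is actually bi-infinite does not decrease the orbit count of hyperplanes in the way you assert. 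With these repairs your approach could be made to work, but it would amount to reproving Lemma~\ref{removing SP hyp} in a less transparent way.
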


Because of the uninteresting procedures described above, every group $G$ has infinitely many different cocompact
cubulations, as soon as it has one. It is a much less trivial question whether $G$ admits infinitely many different
\emph{bald} cubulations. The reason is that shaving --- in particular, panel collapse --- can radically shrink
the hyperplane-stabilisers. We are not aware of any general technique to ensure that different essential cubulations
will not get shaved into the same bald cubulation.

In fact, there can be no general procedure to produce distinct bald cubulations of a cocompactly cubulated group.
In Proposition~\ref{unique bald cubulation} we show:

\begin{propintro}\label{bald intro}
Let $T_1,T_2$ be locally finite trees with all vertices of degree at least $3$.  Let 
$U_1,U_2\leq\Aut(T_1),\Aut(T_2)$ be closed, locally primitive subgroups generated by edge-stabilisers and satisfying Tits' 
independence property.  Let $\Gamma\leq U_1\times U_2$ be a uniform lattice with dense projections to $U_1$ and $U_2$.  
Then the standard action of $\Gamma$ on $T_1\times T_2$ is the only bald cubulation of $\Gamma$.
\end{propintro}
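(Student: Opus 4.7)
Suppose $\Gamma\acts X$ is a bald cubulation; the goal is to produce a $\Gamma$--equivariant cubical isomorphism $X\to T_1\x T_2$ onto the standard action. The plan is to decompose $X$ equivariantly as a product of two factors, and then identify each factor with the corresponding tree. The first step is to apply the Caprace--Sageev canonical product decomposition of the essential $\CAT$ cube complex $X$: one obtains $X=X_1\x\dots\x X_k$ with each $X_i$ irreducible, and a finite-index subgroup $\Gamma_0\leq\Gamma$ preserves each factor. Using the rigidity of Burger--Mozes--type lattices (for instance, via the restricted normal subgroup structure of $\Gamma$), and replacing $U_1,U_2$ by the closures of the projections of $\Gamma_0$ if necessary, one can arrange that $\Gamma_0=\Gamma$, so that $\Gamma$ itself acts on each irreducible factor $X_i$.

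Next, one argues that each irreducible factor $X_i$ is a tree, and more specifically, is $\Gamma$--equivariantly isomorphic to one of $T_1,T_2$. The key input is a classification of the codimension--$1$ subgroups of $\Gamma$: thanks to the hypotheses that the $U_j$ are generated by edge-stabilisers and satisfy Tits' independence, together with the density of projections, every such subgroup should turn out to be commensurable in $\Gamma$ with $\Gamma \cap \pi_j^{-1}(\mathrm{Stab}_{U_j}(e))$ for some edge $e$ of some $T_j$. Combined with baldness and hyperplane-essentiality, this forces the hyperplanes of $X_i$ to come from a single tree $T_j$, and hence forces $X_i\cong T_j$ as a $\Gamma$--complex (with $\Gamma$ acting through its projection into $U_j$).

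Finally, one shows $k=2$ and matches up the two factors with $T_1,T_2$. There must be at least two factors because $\Gamma$ is irreducible: a single dense projection into some $U_j$ is not a discrete embedding, so walls from only one tree cannot produce a proper action. There cannot be more than two factors, because the only codimension--$1$ subgroup systems available in $\Gamma$ are the two pulled back from the two projections. Hence $k=2$ and $\{X_1,X_2\}=\{T_1,T_2\}$, and the $\Gamma$--action is the standard product action, with baldness ruling out any parallel hyperplanes or cubical subdivisions. The main obstacle is the classification of codimension--$1$ subgroups: showing that each is commensurable with an edge-stabiliser pulled back from one of the $T_j$. This is where all the structural hypotheses on the $U_j$ --- local primitivity, Tits' independence, and generation by edge-stabilisers --- must be deployed in concert with the density of $\Gamma$'s projections.
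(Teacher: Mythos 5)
Your outline correctly identifies the de Rham decomposition as the first step and correctly anticipates that each factor must be one of the trees, but the proof has a genuine gap at its centre, one you yourself flag: the claimed classification of codimension--$1$ subgroups of $\Gamma$ as being, up to commensurability, pullbacks of edge-stabilisers. This is not an off-the-shelf fact, and it is not obvious how to prove it directly. It is essentially the entire content of the proposition, and deferring it to ``deploying the structural hypotheses in concert'' does not close the argument.

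The paper takes a different route that circumvents the need for such a classification. Working with the finite-index subgroup $\Gamma_0$ preserving the de Rham factors (you cannot, in general, arrange $\Gamma_0=\Gamma$, nor do you need to), the paper first uses the simplicity of $U_1,U_2$ (via Tits' argument, cf.\ \cite{Caprace-DeMedts}) together with Shalom's vanishing theorem \cite{Shalom00} to kill additive characters of $\Gamma_0$, hence to show that $\Gamma_0$ has no finite orbits in $\partial_\infty X_j$, and that $\Gamma_0$ still projects densely to each $U_i$. This sets up the hypotheses of the Chatterji--Fern\'os--Iozzi superrigidity theorem \cite{CFI}: the $\Gamma_0$--action on each factor $X_j$ extends to a continuous action of some $U_{i_j}$ on a $\Gamma_0$--invariant median subalgebra, which by Lemma~\ref{median sub-algebras} (using baldness) is all of $X_j^{(0)}$. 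Then \cite[Theorem~A]{Caprace-DeMedts} forces the $U_{i_j}$--hyperplane-stabilisers to be compact, so hyperplanes of $X_j$ are compact; hyperplane-essentiality then forces $X_j$ to be a tree, and a lemma of \cite{BMZ} pins it down as $T_{i_j}$. So the indirect control on codimension--$1$ subgroups that you want is exactly what superrigidity provides, at the level of the extended $U_{i_j}$--action; it is not extracted by a direct combinatorial analysis inside $\Gamma$.

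Two smaller issues: your step of ``replacing $U_1,U_2$ by the closures of the projections of $\Gamma_0$'' to get $\Gamma_0=\Gamma$ is not sound --- the point is rather that simplicity of $U_i$ guarantees the projection of a finite-index subgroup is still dense, so one simply works with $\Gamma_0$ and recovers the $\Gamma$--equivariant statement at the end. And the identification of each factor with a specific tree is not automatic from knowing it is a tree; the paper invokes \cite[Lemma~1.4.7]{BMZ} for this.
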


Proposition~\ref{bald intro} implies that the irreducible lattices in products of trees constructed by 
Burger--Mozes in~\cite{BM1,BM2} have a unique bald cubulation (see in particular \cite[Theorem~6.3]{BM2}).  

An interesting question is to what extent the hypotheses of Proposition~\ref{bald intro} can be relaxed.  Specifically, a BMW group (for 
Burger--Mozes--Wise) is a group $\G$ admitting a free, vertex-transitive action $\G\to\Aut(T_1)\times\Aut(T_2)$, where 
$T_1,T_2$ are finite valence regular trees \cite{Caprace-BMW}.
Many examples of BMW groups have been studied, beginning with the aforementioned 
work of Burger--Mozes and contemporaneous work of Wise~\cite{Wise-thesis,Wise-trees}.  Which irreducible BMW groups have a 
unique bald cubulation?  


The proof of Proposition~\ref{bald intro} proceeds by studying the de Rham decomposition of 
$X$, where $\G\acts X$ is a bald cubulation provided by Proposition~\ref{bald exist intro}.  Combining results 
from~\cite{Tits,Shalom00} enables us to apply the superrigidity theorem of Chatterji--Fern\'os--Iozzi \cite{CFI}, which, in conjunction 
with a result of Caprace--de Medts~\cite{Caprace-DeMedts}, implies that each factor of the de Rham decomposition is a $\CAT$ 
cube complex with compact hyperplanes.  Baldness --- in particular, hyperplane-essentiality --- then implies that each 
factor is a tree.  A result in~\cite{BMZ} finally shows that $X$ is equivariantly isomorphic to the product of two trees we 
started with.

Proposition~\ref{bald intro} stands in sharp contrast to the situation for (most) hyperbolic groups:

\begin{thmintro}[Infinitely many bald cubulations]\label{infinitely many intro}
Let $G$ be a nonelementary Gromov-hyperbolic group acting properly and cocompactly on a $\CAT$ cube complex.  Suppose that 
{\bf \emph{at least one}} of the following holds:
\begin{itemize}
     \item $G$ is not virtually free;
\item $\Out(G)$ is infinite.
\end{itemize}
Then $G$ admits infinitely many pairwise-inequivalent bald cubulations.
\end{thmintro}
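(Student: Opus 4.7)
The plan is to split into two cases according to which of the two hypotheses holds. In both cases, the key tool will be the length function rigidity stated in the introduction: an essential, hyperplane-essential cocompact cubulation of a hyperbolic group $G$ is determined, up to $G$-equivariant cubical isomorphism, by its length function $\ell_X\colon G\to\N$.

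\textbf{Case 1: $\Out(G)$ is infinite.} I would fix any bald cubulation $G\acts X$ provided by Proposition~\ref{bald exist intro} and, for each $\alpha\in\Aut(G)$, form the twisted cubulation $X^\alpha$ on the same underlying cube complex, with the action $g\cdot_\alpha x:=\alpha(g)x$. Since essentiality, hyperplane-essentiality, and the no-parallel-halfspaces condition are all invariant under relabelling of $G$-orbits, every $X^\alpha$ is again bald. The length function of $X^\alpha$ is $\ell_X\circ\alpha$, so by rigidity, $X^\alpha$ and $X^\beta$ are equivalent cubulations precisely when the class of $\alpha\beta^{-1}$ lies in the stabiliser $\mathrm{Stab}(\ell_X)\leq\Out(G)$. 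The key sub-claim is that this stabiliser is finite: any length-preserving outer automorphism lifts, via rigidity, to a cubical automorphism of $X$ normalising the image of $G$, and the normaliser $N_{\Aut(X)}(G)$ contains $G$ with finite index by properness and cocompactness of the action (together with finiteness of the centre of $G$, which holds because $G$ is nonelementary hyperbolic). This yields infinitely many pairwise inequivalent bald cubulations.

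\textbf{Case 2: $G$ is not virtually free.} By Case~1 one may assume $\Out(G)$ is finite. Here the plan is to iterate the bending construction. Starting from a bald cubulation $X_0$ and a chosen hyperplane-stabiliser $K_0$ of $X_0$, I would apply Theorem~\ref{one orbit intro} to produce a proper cocompact essential cubulation $Y_1$ with a single orbit of hyperplanes, whose stabiliser $H_1$ has limit set inside a prescribed open neighbourhood $U_1\cu\partial_\infty G$ of the limit set of $K_0$ (using the remark following Theorem~\ref{one orbit intro}). Shaving $Y_1$ via Proposition~\ref{bald exist intro} gives a bald cubulation $X_1$. Iterating with nested neighbourhoods $U_1\supsetneq U_2\supsetneq\cdots$ produces a sequence $\{X_n\}_{n\geq 0}$ of bald cubulations. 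Since $\Out(G)$ is finite, each $\Out(G)$-orbit of bald cubulations is finite, so it will suffice to show the sequence takes infinitely many values.

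The hard part is ensuring that shaving does not collapse substantially different bent cubulations to equivalent bald ones. To rule this out, I would exhibit an invariant of $X_n$ --- for instance, the limit set of some hyperplane-stabiliser, or the translation length $\ell_{X_n}(g)$ of a carefully chosen $g\in G$ whose axis is cut by the crooked hyperplanes introduced at step $n$ --- that varies strictly with $n$, tracking the shrinking neighbourhoods $U_n$ through the shaving procedure. The role of the non-virtually-free hypothesis will be to guarantee the existence of infinite quasiconvex codimension-$1$ subgroups with connected limit sets inside a one-ended splitting factor of $G$, providing sufficient flexibility for the bending to actually alter $\ell_{X_n}$ from step to step; in a virtually free group, by contrast, essential hyperplane-essential cubulations are just trees and no such deformation exists. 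Combined with the finiteness of $\Out(G)$, this will confirm that the $X_n$ cannot all lie in a single $\Out(G)$-orbit of equivalence classes, giving the required infinitely many inequivalent bald cubulations.
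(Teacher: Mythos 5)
Your Case~1 (infinite $\Out(G)$) is close in spirit to the paper's Lemma~\ref{lem:virtually_free}, but you take a detour through the length-function rigidity theorem, whereas the paper gives a direct and more elementary displacement-counting argument that avoids discussing $N_{\Aut(X)}(\rho(G))$ at all. Your stated sub-claim that $\rho(G)$ has finite index in its normaliser is not justified as written: $\Aut(X)$ is typically non-discrete, so the centraliser $Z_{\Aut(X)}(\rho(G))$ is only compact, not obviously finite, and $[N:\rho(G)]$ can fail to be finite. Fortunately you do not actually need this --- the correct observation is that $N/\rho(G)Z$ is compact and embeds in the discrete group $\Out(G)$, hence is finite, and $\mathrm{Stab}(\ell_X)$ lands in this quotient. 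So the idea is salvageable, but the assertion as written is a gap. The paper's argument is also stronger in that it applies to arbitrary cocompactly cubulated groups, not just hyperbolic ones.

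Case~2 is where the real substance of the theorem lies, and here your proposal is missing the central idea. You correctly identify the danger --- ``shaving does not collapse substantially different bent cubulations to equivalent bald ones'' --- but you offer only the vague plan of ``exhibiting an invariant that varies strictly with $n$.'' This will not work as stated, because after shaving one only controls that the new hyperplane's limit set is \emph{contained in} that of the crooked hyperplane $U(\G_n)$; shaving can shrink it drastically and unpredictably, so neither the limit set nor the translation length of a fixed $g$ is under any quantitative control. The paper resolves this by arguing by contradiction: if there were finitely many bald cubulations, one could choose a hyperplane $\mf{w}$ whose limit set is \emph{minimal} among limit sets of hyperplanes of bald cubulations (Lemma~\ref{minimal hyperplane}). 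One then bends along $\mf{w}$ to produce crooked hyperplanes $\G_n$ whose limit sets converge to $\L\mf{w}$ but never contain all of it, and which are still skewered by a fixed $g$. Shaving produces hyperplanes $\mf{u}_n$ with $\L\mf{u}_n\subseteq\L U(\G_n)$, and finiteness of the set of bald cubulations forces infinitely many of these (nonempty, by one-endedness) limit sets into finitely many $\langle g\rangle$-orbits. Both the ``translated'' case and the ``constant'' case then contradict either the convergence of $\L U(\G_n)$ to $\L\mf{w}$ or the minimality of $\L\mf{w}$. This minimality device and the ensuing dichotomy are precisely what your proposal lacks, and without them the argument does not close.

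Two smaller points: your Case~2 also omits the reduction of a general $G$ (one-ended vertex group in a graph of groups with finite edge groups) to the one-ended case, which the paper supplies via Lemma~\ref{assembling cubulations}; and your parenthetical claim that essential, hyperplane-essential cubulations of virtually free groups ``are just trees'' is only true when hyperplane-stabilisers are finite --- the paper explicitly leaves open the virtually-free/finite-$\Out$ case because this need not hold.
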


The case where $\Out(G)$ is infinite is straightforward and is dealt with in Lemma~\ref{lem:virtually_free}.  The main 
content of Theorem~\ref{infinitely many intro} is the case where $G$ splits as a (possibly trivial) finite graph of groups 
with finite edge groups and at least one vertex group one-ended.

Groups to which the theorem applies include fundamental groups of hyperbolic surfaces, fundamental groups of 
hyperbolic $3$--manifolds~\cite{Kahn-Markovic,Bergeron-Wise}, non-virtually free groups with finite $C'(\frac16)$ 
presentations~\cite{Wise-GAFA} (and hence random groups at sufficiently low density in Gromov's 
model~\cite{Ollivier-Wise}), non-virtually free hyperbolic Coxeter groups~\cite{Niblo-Reeves}, hyperbolic free-by-cyclic 
groups~\cite{Hagen-Wise1}, non-virtually free one-relator groups with torsion~\cite{Wise-qch}, Bourdon groups \cite{Bourdon-GAFA}, 
and others.  

The theorem does not apply in the case where $G$ is virtually free and $\Out(G)$ is finite; such groups were 
characterised by Pettet~\cite{Pettet} and include certain right-angled Coxeter groups (see~\cite[Proposition 
5.5]{Healy} and, more generally, \cite[Theorem~1.1]{LS} and \cite[Theorem~1.4]{GPR}).  
We discuss the virtually free case more below. 

\begin{rmk*}[Strategy of the proof of Theorem~\ref{infinitely many intro}]  Lemmas~\ref{lem:virtually_free} 
and Lemma~\ref{assembling cubulations} reduce the claim to the case where $G$ is one-ended.  

The strategy in the one-ended case is as follows.  First, we assume for a contradiction that $G$ admits only finitely many 
bald cubulations.  From this, in Lemma~\ref{minimal hyperplane}, we deduce that some such cubulation $G\acts X$ has a 
hyperplane $\mf w$ whose limit set in $\partial_\infty G$ is ``minimal'', in the sense that it does not properly contain the 
limit set of any hyperplane of any bald cubulation.  

We choose an infinite-order element $g\in G$ whose axis is cut by 
$\mf w$.  We then apply hyperplane-bending along $\mf w$ to produce crooked hyperplanes $\mf u_n$ with three key properties:
\begin{enumerate}
     \item[(i)] the fixed points of $g$ in $\partial_\infty G$ lie in different components of the complement in 
$\partial_\infty G$ of the limit set of $\mf u_n$;
\item[(ii)] the limit set of $\mf w$ is not contained in that of $\mf u_n$;
\item[(iii)] every neighbourhood in $\partial_\infty G$ of the limit set of $\mf w$ contains the limit set of $\mf u_n$ for all 
sufficiently large $n$.
\end{enumerate}

For each $n$, we add to the hyperplanes of $X$ the $G$--orbit of $\mf u_n$ and cubulate, to get a new cubulation $G\acts 
X_n$. Shaving these cubulations, we obtain bald cubulations $G\acts Y_n$ with the property that there exist 
hyperplanes $\mf{v}_n\cu Y_n$ cutting the axis of $g$, whose limit set is contained in that of $\mf{u}_n$. 
This is our only form of control on how shaving affects hyperplane-stabilisers.

By property~(iii), the limit sets of the $\mf v_n$ must Hausdorff-converge in $\partial_{\infty}G$ to a subset of the limit set
of $\mf{w}$. From the assumption that there are only finitely many bald cubulations, we see that there 
are only $\langle g\rangle$--finitely many limit sets $\mf v_n$. From this it is straightforward to conclude that one of the
$\mf{v}_n$ has its limit set contained in that of $\mf{w}$ and, from property~(ii), this is a proper inclusion. This
contradicts the ``minimality'' of $\mf{w}$. 

In this argument, it is crucial that $\mf v_n$ have nonempty limit set, which is where 
one-endedness of $G$ comes in: no cubulation of a one-ended group can have a bounded hyperplane.

\end{rmk*}

\subsection*{Further questions.}
Our results and techniques raise various questions.  

The application of hyperplane-bending used to prove Theorem~\ref{one orbit intro} in the one-ended case, and its 
combination with the ideas from~\cite{Wise-antenna} in the general case, does not in general yield a hyperplane-essential 
cubulation.  This raises the following question:

\begin{quest}\label{quest:one_orbit_HE}
Does there exist a hyperbolic group $G$ such that no hyperplane-essential cubulation of $G$ has a single orbit of 
hyperplanes? Does there exist such a $G$ that is one-ended?     
\end{quest}

When $G$ is a free group, the ``exotic'' cubulations from~\cite{Wise-antenna} are not hyperplane-essential. They are thus 
susceptible to the panel collapse procedure from~\cite{Hagen-Touikan} (summarised in Proposition~\ref{facts about panel 
collapse} below), which shrinks the hyperplane-stabilisers.  It is unknown whether every cubulation of a free group with a single 
orbit of hyperplanes panel collapses to a tree.

In some cases, the proof of Theorem~\ref{infinitely many intro} relies on twisting a fixed cubulation by the action of 
$\Out(G)$, and in other cases, it does not.  This motivates the following question:

\begin{quest}\label{quest:infinite_up_to_out}
     Let $G$ be a cocompactly cubulated one-ended hyperbolic group.  Are there infinitely many bald 
cubulations up to the 
$\Out(G)$--action?  
\end{quest}

When $G$ has no two-ended splitting, $\Out(G)$ is finite~\cite[Corollary~1.3]{BF-stable}, and one gets a positive answer from 
Theorem~\ref{infinitely many intro}.  More generally, by Levitt's characterisation of hyperbolic 
groups with infinite outer automorphism group~\cite[Theorem 1.4]{Levitt}, $\Out(G)$ is finite provided $G$ 
does not split over a two-ended subgroup with infinite centre.  So, for example, there are examples of 
hyperbolic right-angled Coxeter groups $G$ that split over $D_{\infty}$ but have $\Out(G)$ finite, and so 
Theorem~\ref{infinitely many intro} applies to give a positive answer to the question in those cases.

One can ask more refined versions of the question by measuring the complexity of bald cubulations $G\acts  X$ in some way, 
and then asking if there are infinitely many bald cubulations, up to the action of $\Out(G)$, with at most a given 
complexity.  Examples of complexity include the dimension of $X$, the number of $G$--orbits of hyperplanes in $X$, etc.

When $\Out(G)$ is infinite, one can often make fixed elements $g\in G$ have arbitrarily large translation 
length in bald cubulations of $G$.  This motivates:

\begin{quest}
Given a cocompactly cubulated hyperbolic group that is not virtually free, can each infinite-order element $g\in G$ become 
arbitrarily long in the bald cubulations of $G$?
\end{quest}

Finally, the groups we have shown to admit unique bald cubulations are not virtually special (irreducible BMW groups).  This motivates:

\begin{quest}
Which (non-hyperbolic) virtually special groups admit infinitely many bald cubulations?
\end{quest}

Among hyperbolic cocompactly cubulated groups, in view of Theorem~\ref{infinitely many intro}, the only remaining 
question is about virtually free groups with finite outer automorphism groups.  

We observe that if $G$ is a virtually free group and $\Out(G)$ is finite, then the existence of infinitely many bald 
cubulations of $G$ will require the construction of bald cubulations $G\acts X$ where $X$ has some infinite 
hyperplane-stabilisers.  Indeed, if all hyperplane-stabilisers are finite, then baldness implies that $X$ is a tree.  All of 
the proper, cocompact actions of $G$ on trees belong to the same deformation space $\mathcal D$ in the sense 
of~\cite{Guirardel-Levitt}, and $\mathcal D$ is $\Out(G)$--finite (up to projectivising), by~\cite[Proposition 
8.6]{Guirardel-Levitt}.  So it appears some other idea is needed, possibly along the lines of the proof of 
Theorem~\ref{infinitely many intro}.  

\subsection*{Outline of the paper}
In Section~\ref{sec:prelim}, we first discuss background on $\CAT$ cube complexes and cubulating groups.  We then prove 
various technical lemmas which will be used later.  We also discuss the notion of an \emph{abstract hyperplane}.  The 
procedure for ``shaving'' a $\CAT$ cube complex into a bald one is also discussed in this 
section, proving Proposition~\ref{bald exist intro}.  In Section~\ref{sec:bending}, we describe hyperplane-bending, and 
also generalise Wise's \emph{antenna} construction, to prove Theorem~\ref{one orbit intro}.  In 
Section~\ref{sec:bald_hyperbolic}, we prove Theorem~\ref{infinitely many intro}, and in Section~\ref{sec:unique_bald} we 
prove Proposition~\ref{bald intro}.

\subsection*{Acknowledgements.} 
We are grateful to Pierre-Emmanuel Caprace for directing us to Lemma~1.4.7 in \cite{BMZ}, to Anthony Genevois 
for pointing out \cite{Levitt}, and to Daniel Groves, Jason Manning and Henry Wilton for discussions. 
We also thank the anonymous referee for their many helpful comments.

Fioravanti thanks Max Planck Institute for Mathematics in Bonn for its hospitality and financial support.

\section{Preliminaries.}\label{sec:prelim}
For basic notions related to $\CAT$ cube complexes, we direct the reader to 
e.g.~\cite{Chepoi,Haglund,Haglund-GD,Sageev,Sag95,Wise-book}.  We recall some of these presently.

Throughout this section, $X$ denotes a $\CAT$ cube complex.

\subsection{$\CAT$ cube complexes.}\label{sec:cube_complex_prelim}

\subsubsection{Hyperplanes, halfspaces, separation, transversality}
We denote by $\mscr{W}(X)$ the set of hyperplanes of $X$ 
and by $\mscr{H}(X)$ the set of halfspaces.  For each $\mf w\in\mscr{W}(X)$, the two components of $X-\mf w$ are the 
halfspaces $\mf h,\mf h^*$ \emph{associated} to $\mf w$.  Each $\mf h\in\mscr{H}(X)$ is associated to (bounded by) a unique 
hyperplane $\mf w$, and $\mf h^*$ always denotes the other halfspace associated to $\mf w$.

Given $\mf w\in\mscr{W}(X)$ and $A,B\subseteq X$, we say that $\mf w$ \emph{separates} $A$ and $B$ if there is a halfspace 
$\mf h$ associated to $\mf w$ such that $A\subseteq \mf h$ and $B\subseteq\mf h^*$.  Let $\mscr{W}(A|B)$ denote the set of 
hyperplanes $\mf w$ separating $A$ from $B$. For ease of notation, we will write $\mscr{W}(x|y,z)$, rather than $\mscr{W}(x|\{y,z\})$.
   
Hyperplanes $\mf u,\mf w$ are \emph{transverse} if they are distinct and satisfy $\mf u\cap\mf w\neq\emptyset$.  
Equivalently, letting $\mf a,\mf b$ be halfspaces associated to $\mf u,\mf w$ respectively, each of the four intersections 
$\mf a\cap\mf b,\mf a^*\cap\mf b,\mf a^*\cap\mf b^*,\mf a\cap\mf b^*$ is nonempty. We also say that the halfspaces $\mf{a}$ and $\mf{b}$ are \emph{transverse}.

\begin{defn}[Facing triple, chain]\label{defn:facing_triple}
The pairwise disjoint hyperplanes $\mf u,\mf v,\mf w$ of $X$ form a \emph{facing triple} in $X$ if no two of $\mf u,\mf 
v,\mf w$ are separated by the third; equivalently, there exist disjoint halfspaces $\mf a,\mf b,\mf c$ respectively 
associated to $\mf u,\mf v,\mf w$.

The distinct hyperplanes $\mf w_1,\mf w_2,\dots$ form a \emph{chain} if, for each $i$, there exists a 
halfspace $\mf h_i$ associated to $\mf w_i$ such that (up to relabelling), we have $\mf h_i\subsetneq\mf 
h_{i+1}$ for all $i$.
\end{defn}

For each $\mf w\in\mscr{W}(X)$, recall that $\mf w$ is a $\CAT$ cube complex whose cubes are \emph{midcubes} of cubes $c$ of 
$X$ with $c\cap \mf w\neq\emptyset$.  Accordingly, we will sometimes abuse language and refer to a ``vertex of $\mf w$'' --- 
by this we mean a $0$--cube of $\mf w$ when the latter is regarded as a cube complex; equivalently, vertices of $\mf w$ are 
midpoints of edges of $X$ dual to $\mf w$.  The hyperplanes of $\mf w$ are exactly the subspaces $\mf w\cap\mf u$, as $\mf 
u$ varies over the hyperplanes of $X$ transverse to $\mf w$.

\subsubsection{The $\ell_1$ metric}
In this paper, we always work with the $\ell_1$ metric on $X$, which we denote $d$.  We will only ever be interested in 
distances between vertices of $X$, or between vertices of the cubical subdivision of $X$.  Accordingly, we just need the 
following facts about $d$:
\begin{itemize}
     \item If $x,y\in X^{(0)}$, then $d(x,y)=\#\mscr{W}(x|y)$.
     \item The metric $d$ restricts on $X^{(0)}$ to the metric induced by the usual graph metric on $X^{(1)}$.
     \item In particular, combinatorial geodesics in $X^{(1)}$ are exactly combinatorial paths containing at most one edge 
intersecting each hyperplane.
\end{itemize}

\subsubsection{The median}
Recall from e.g.~\cite{Chepoi} that a graph $\Gamma$ is \emph{median} if there exists a ternary operator 
$\mu\colon (\Gamma^{(0)})^3\to\Gamma^{(0)}$ such that (letting $d$ denote the usual graph metric), we have 
$d(x_i,x_j)=d(x_i,\mu(x_1,x_2,x_3))+d(x_j,\mu(x_1,x_2,x_3))$ for $i\neq j$ and all vertices $x_1,x_2,x_3$.  A \emph{discrete 
median algebra} is the vertex-set of a median graph, equipped with the median operator.  (This is not the standard 
definition, but it is equivalent by~\cite[Proposition 2.17]{Roller}.)

By~\cite[Theorem 6.1]{Chepoi}, $X^{(1)}$, with the graph-metric $d$, is a median graph, and conversely each median graph is 
the $1$--skeleton of a uniquely determined $\CAT$ cube complex.  Letting $\mu$ denote the median on $X^{(0)}$, we have for 
all $x,y,z\in X^{(0)}$ that $\mscr{W}(x|\mu(x,y,z))=\mscr{W}(x|y,z)$.

Fixing $p\in X^{(0)}$, the Gromov product at $p$ therefore satisfies $(x\cdot y)_p=\#\mscr{W}(p|x,y)$.  Indeed: 
\[(x\cdot y)_p=d(p,\mu(p,x,y))=\#\mscr{W}(p|\mu(p,x,y))=\#\mscr{W}(p|x,y).\]

\subsubsection{Convexity, gate-projection, and bridges}
A subset $S\subseteq X^{(0)}$ is \emph{convex} if $\mu(x,y,z)\in S$ for all $x,y\in S$, $z\in X$.  A 
subcomplex $Y$ of $X$ is \emph{convex} if $Y^{(0)}$ is convex and $Y$ is \emph{full}, in the sense that $Y$ contains every 
cube $c$ of $X$ for which $c^{(0)}\subseteq Y^{(0)}$. Equivalently, $Y$ is the largest 
subcomplex contained in the intersection of all halfspaces containing $Y$.  (For subcomplexes, this notion 
of convexity agrees with CAT(0) metric convexity~\cite{Haglund}.)   

If $Y\cu X$ is a convex subcomplex, then any combinatorial geodesic with endpoints on $Y$ lies in $Y$. Moreover, $Y$ is itself a $\CAT$ cube complex. We identify $\mscr{W}(Y)$ with the subset of $\mscr{W}(X)$ of hyperplanes that intersect $Y$.

Given $A\subseteq X$, its \emph{cubical convex hull} is the intersection of all convex subcomplexes containing $A$.  It is 
common to use the term \emph{interval} to refer to the set $I(x,y)$ of vertices $z$ such that $\mu(x,y,z)=z$, where 
$x,y\in X^{(0)}$.  The interval $I(x,y)$ is just the convex hull of $\{x,y\}$.

If $Y\subseteq X$ is a convex subcomplex, there is a \emph{gate-projection} $\pi_Y\colon X^{(0)}\to Y^{(0)}$ characterised by 
the property that any hyperplane $\mf w$ separates $x\in X^{(0)}$ from $\pi_Y(x)$ if and only if $\mf w$ separates $x$ from 
$Y$.  If $x,y\in X^{(0)}$, then $\mscr{W}(\pi_Y(x)|\pi_Y(y))=\mscr{W}(x|y)\cap\mscr{W}(Y)$, so $\pi_Y$ is $1$--Lipschitz.

The vertex $\pi_Y(x)$ is the unique closest point of $Y$ to $x$.  In fact, one can extend $\pi_Y$ to a cubical map 
$\pi_Y\colon X\to Y$; see~\cite[Section 2.1]{HHSI}.  

Let $Y,Z$ be convex subcomplexes of $X$.  Then $\pi_Y(Z)$ is a convex subcomplex of $X$, and the hyperplanes intersecting 
$\pi_Y(Z)$ are exactly the hyperplanes intersecting both $Y$ and $Z$.  In particular, if there is no such hyperplane, 
$\pi_Y(Z)$ is a single vertex.  

The convex hull $B(Y,Z)$ of $\pi_Y(Z)\cup\pi_Z(Y)$ is therefore a $\CAT$ cube complex whose 
set of hyperplanes has the form $(\mscr{W}(Y)\cap\mscr{W}(Z))\sqcup \mscr{W}(Y|Z)$.  By e.g.~\cite[Proposition 2.5]{CS}, 
we get $B(Y,Z)\cong \pi_Y(Z)\times H\cong\pi_Z(Y)\times H$, where $H$ 
is isomorphic to the interval $I(\pi_Z(y),\pi_Y(\pi_Z(y)))$ for any vertex $y\in Y$.  In particular, $\pi_Y(Z)$ and 
$\pi_Z(Y)$ are isomorphic $\CAT$ cube complexes; the maps $\pi_Y,\pi_Z$ restrict to cubical isomorphisms on these sets. The subcomplex $B(Y,Z)$ is the disjoint union of the intervals $I(y,z)$ as $(y,z)$ varies over the pairs in $Y\times Z$ with $d(y,z)=d(Y,Z)$. We refer to $B(Y,Z)$ as the \emph{bridge} between $Y$ and $Z$.

\subsubsection{Walls and median subalgebras}
This will only be used in Section~\ref{sec:unique_bald}.

A \emph{subalgebra} of $X^{(0)}$ is a subset $A$ such that $\mu(a,b,c)\in A$ whenever $a,b,c\in A$.  Given a subalgebra $A$, 
a subset $B\subseteq A$ is \emph{median-convex} in $A$ if $\mu(a,b,c)\in B$ whenever $a,b\in B$ and $c\in A$.  
Note that this coincides with our usual notion of convexity when $A=X^{(0)}$.

A \emph{wall} in $A$ is a partition $A=\mf a\sqcup \mf a^*$, where $\mf a,\mf a^*$ are nonempty and median-convex in $A$. 
When $A=X^{(0)}$, such partitions always originate from hyperplanes of $X$, by intersecting $X^{(0)}$ with the two associated halfspaces. 
For a general subalgebra, we still refer to the sets $\mf a,\mf a^*$ as \emph{halfspaces}. 
Let $\mscr{W}(A)$ and $\mscr{H}(A)$ be the set of walls and the set of halfspaces of $A$.

If $S\cu X^{(0)}$ is a convex subset, and $A$ is a subalgebra of $X^{(0)}$, then $S\cap A$ is median-convex in $A$. It follows that if 
$\mf{w}\in\mscr{W}(X)$ is a hyperplane such that $A$ intersects both associated halfspaces $\mf{h},\mf{h}^*$, 
then the partition $(\mf h\cap A)\sqcup(\mf h^*\cap A)$ is a wall in $A$. By~\cite[Lemma 6.5]{Bow1}, all walls of $A$ actually arise this way. 

\subsubsection{Cubical subdivision}
Recall that $X$ admits a \emph{cubical subdivision} $X'$ --- see Definition~2.4 in~\cite{Haglund} --- which is the $\CAT$ cube 
complex constructed as follows.

Given a cube $c\cong[-\frac12,\frac12]^n$, let $c'$ be the cube complex obtained by subdividing each factor 
$[-\frac12,\frac12]$ so that it is a graph isomorphic to $K_{1,2}$ (but with edges of length $\frac12$), and taking the product cell structure.

The subdivision $X'$ is formed from $X$ by replacing each cube $c$ by $c'$.  Then $X'$ is a $\CAT$ cube complex.   

The obvious identity maps $c\to c'$ induce a map $X\to X'$; the preimage of the vertex set of $X'$ is the set of barycentres 
of cubes of $X$.  Letting $d'$ be the $\ell_1$ metric on $X'$ (regarded as an abstract $\CAT$ cube complex 
whose cubes have side length $1$), we have $d'(x,y)=2d(x,y)$.

Each hyperplane of $X$ is a convex subcomplex of $X'$, and $X'\to X$ induces a two-to-one map $\mscr{W}(X')\to\mscr{W}(X)$ in 
an obvious way.

Letting $\mu'$ denote the median on $(X')^{(0)}$, we have that $\mu'(x,y,z)=\mu(x,y,z)$ for $x,y,z\in 
X^{(0)}\subset(X')^{(0)}$.  By working in $X'$, we can thus extend the notion of convexity to subspaces of $X$ that become 
subcomplexes upon subdivision, and this is in particular true for hyperplanes and halfspaces.  In particular, it makes sense 
to talk about the gate projection $\pi_{\mf h}\colon X\to\mf h$ where $\mf h$ is a hyperplane or halfspace, the bridge between 
two hyperplanes or two halfspaces, etc.

\subsubsection{Facts about group actions}
We denote by $\Aut(X)$ the group of cubical automorphisms of $X$.  The action of $\Aut(X)$ is an isometric action on 
$(X,d)$ and an action by median isomorphisms on $(X,\mu)$. It induces natural actions on the sets $\mscr{W}(X)$ and $\mscr{H}(X)$. 

We implicitly assume all group actions $G\acts X$ to be by cubical automorphisms.
We say that a group $G$ is \emph{cubulated} if there exists a $\CAT$ cube complex $X$ and a \emph{proper} action $\rho\colon G\to\Aut(X)$.  
If, in addition, $\rho$ can be chosen to be cocompact, then we say $G$ is \emph{cocompactly 
cubulated}.

We will discuss further properties of actions later; for the moment, we recall some facts from~\cite{Haglund}.  Let 
$g\in\Aut(X)$ and let $\mf w\in\mscr{W}(X)$.  We say that $g$ \emph{has an inversion along $\mf w$} if $g\mf w=\mf w$ and 
$g\mf h=\mf h^*$, where $\mf h$ is one of the halfspaces associated to $\mf w$.  We say that $g$ acts \emph{without 
inversions} if $g$ does not have an inversion along any hyperplane, and $g$ acts \emph{stably without inversions} if $g^n$ 
acts without inversions for all $n\in\mathbb Z$.  We have:
\begin{itemize}
     \item If $g\in\Aut(X)$ acts stably without inversions and does not fix a vertex, then $g$ is \emph{combinatorially 
hyperbolic}, i.e.\ there is a combinatorial geodesic $\gamma$ preserved by $g$, on which $g$ acts as a nontrivial translation.
\item $\Aut(X)$ acts naturally on the cubical subdivision $X'$, and every $g\in\Aut(X)$ acts stably without inversions on 
$X'$. 
\end{itemize}

If $X$ is finite-dimensional, then any $g\in\Aut(X)$ has a power acting stably without inversions.
Indeed, the hyperplanes along which the powers of $g$ have inversions are pairwise transverse.

\subsection{Pocsets}\label{sec:pocsets}
A \emph{pocset} is a triple $(\mc{P},\preceq,*)$, where the pair $(\mc{P},\preceq)$ is a poset and $*$ is an order-reversing 
involution.  Two distinct elements $a,b\in\mc{P}$ are \emph{incomparable} if $a\not\preceq b$, $b\not\preceq a$ and $a\neq b^*$. We 
say that $a$ and $b$ are \emph{transverse} if $a$ and $a^*$ are incomparable with $b$ and $b^*$. The \emph{dimension} of 
$\mc{P}$ is the maximal cardinality of a subset of pairwise-transverse elements.

An \emph{ultrafilter} is a subset $\s\cu\mc{P}$ such that:
\begin{enumerate}
\item there do not exist $a,b\in\s$ with $a\preceq b^*$;
\item for every $a\in\mc{P}$, we have $\#(\s\cap\{a,a^*\})=1$.
\end{enumerate}
Equivalently, $\s$ is a maximal subset satisfying (1). We say that $\s$ is a \emph{DCC ultrafilter} if, in addition, $\s$ does not contain any infinite descending chains. We denote by $\min\s\cu\s$ the subset of $\preceq$--minimal elements. Two ultrafilters are \emph{almost equal} if their symmetric difference is finite.

For every $\CAT$ cube complex $X$, the triple $(\mscr{H}(X),\cu,*)$ is a pocset, and the notions of transversality and dimension coincide with the usual ones. For every $v\in X^{(0)}$, the set $\{\mf{h}\in\mscr{H}(X)\mid v\in\mf{h}\}$ is a DCC ultrafilter, and, if $X$ is finite-dimensional, all DCC ultrafilters arise this way (in particular, any two of them are almost equal).

Conversely, to each pair $(\mc{P},\s)$, where $\mc{P}$ is a pocset and $\s\cu\mc{P}$ is an ultrafilter, we can associate a $\CAT$ cube complex $X=X(\mc{P},\s)$. Vertices of $X$ are exactly ultrafilters on $\mc{P}$ that are almost equal to $\s$. Two vertices are joined by an edge exactly when the symmetric difference of the corresponding ultrafilters has only two elements (the minimal possible size). See \cite{Sag95,Roller,Guralnik} for details of the construction.

\begin{lem}\label{pocset lemma}
Let $\mc{P}$ be a finite-dimensional pocset. 
\begin{enumerate}
\item For every maximal subset $\tau\cu\mc{P}$ of pairwise-transverse elements, there exists a unique DCC ultrafilter $\s\cu\mc{P}$ with $\tau\cu\min\s$. 
\item For every DCC ultrafilter $\s\cu\mc{P}$, there exists a subset $\tau\cu\min\s$ such that $\tau$ is a
maximal pairwise-transverse subset of $\mc{P}$.
\item Any two DCC ultrafilters on $\mc{P}$ are almost equal.
\end{enumerate}
Let a group $\Delta$ act on $\mc{P}$ preserving the pocset structure. 
\begin{enumerate}
\setcounter{enumi}{3}
\item If there are only finitely many $\Delta$--orbits of maximal pairwise-transverse  subsets of $\mc{P}$, then the induced 
action $\Delta\acts X(\mc{P},\s)$ is cocompact.
\end{enumerate}
\end{lem}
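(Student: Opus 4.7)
The plan is to prove (1) by an explicit construction, and then deduce (2)--(4) from the resulting correspondence between maximal pairwise-transverse subsets and DCC ultrafilters.

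For (1), I would set
\[
\s \ := \ \{a \in \mc{P} : t \preceq a \text{ for some } t \in \tau\} \ \cup \ \{a \in \mc{P} : t^* \prec a \text{ for some } t \in \tau\}.
\]
Consistency (no $a, b \in \s$ with $a \preceq b^*$) and the ultrafilter property both follow from pairwise transversality of $\tau$ combined with its maximality: any failure of consistency chains into a relation $t \preceq (t')^*$ for $t, t' \in \tau$, contradicting transversality, while maximality forces each $a \in \mc{P}$ outside of $\tau \cup \{t^* : t \in \tau\}$ to be non-transverse to some $t \in \tau$, placing exactly one of $a, a^*$ in $\s$. The inclusion $\tau \cu \min \s$ is immediate from the definition, together with another appeal to transversality. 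Uniqueness follows because the same case analysis dictates, for every $a \in \mc{P}$, which of $a$ or $a^*$ must lie in any ultrafilter satisfying $\tau \cu \min$. DCC is obtained by pigeonholing on the finite set $\tau$: an infinite descending chain in $\s$ would restrict to a subchain lying in an interval $[t, \top)$ or $(t^*, \top)$ for a fixed $t \in \tau$, where finite-dimensionality prevents infinite descent.

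For (2), I would pick $\tau \cu \min \s$ maximal pairwise-transverse (by Zorn) and show it is maximal already in $\mc{P}$. If not, some $a \in \mc{P} \setminus (\tau \cup \{t^* : t \in \tau\})$ is transverse to every $t \in \tau$, and we may assume $a \in \s$. DCC then yields $b \in \min \s$ with $b \preceq a$, and maximality of $\tau$ in $\min \s$ forces $b$ to be comparable with some $t \in \tau$. The four comparability cases contradict, respectively, the transversality of $\{a, t\}$ (twice), the minimality of $t$ in $\s$, and the consistency of $\s$ (the case $b \preceq t^*$ with $b, t \in \s$ directly violates condition (1) of an ultrafilter). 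Hence $b = a \in \min \s$, contradicting the maximality of $\tau$ in $\min \s$.

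For (3), I would show that any two distinct DCC ultrafilters $\s_1, \s_2$ admit a \emph{flip}: some $t \in \min \s_1$ with $t^* \in \s_2$. The key claim is that $\min \s_1 \cu \s_2$ forces $\s_1 = \s_2$. Picking $\tau_1 \cu \min \s_1$ maximal in $\mc{P}$ as in (2), one has $\tau_1 \cu \s_2$, and a variant of the (2)-argument applied to $\s_2$ shows $\tau_1 \cu \min \s_2$; uniqueness from (1) then gives $\s_1 = \s_2$. When $\s_1 \neq \s_2$, replacing the witnessed $t$ by $t^*$ in $\s_1$ yields a new DCC ultrafilter strictly closer to $\s_2$. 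The main obstacle I anticipate is showing this iteration must terminate, equivalently that $|\s_1 \triangle \s_2|$ is finite a priori; I expect this to require a bridge-style argument pairing $\tau_1$- with $\tau_2$-elements and invoking finite-dimensionality to bound their interleaving.

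For (4), by (1) and (2) the map $\tau \mapsto \s_\tau$ is a $\Delta$-equivariant surjection from maximal pairwise-transverse subsets to DCC ultrafilters, and by (3) the DCC ultrafilters coincide with the vertex set of $X(\mc{P}, \s)$. Hence finitely many $\Delta$-orbits of maximal pairwise-transverse subsets descend to finitely many $\Delta$-orbits of vertices of $X(\mc{P}, \s)$, and finite-dimensionality of $\mc{P}$ promotes this to finitely many orbits of cubes of every dimension, yielding cocompactness.
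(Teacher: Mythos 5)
The paper does not give a self-contained argument for parts~(1)--(3): it simply cites Proposition~3.1 and Corollary~3.3 of Guralnik and then observes that~(4) is immediate from~(1) and~(2). Your proposal, by contrast, tries to reprove~(1)--(3) from scratch. Your construction of $\s$ from $\tau$ is natural, your verifications that $\s$ is a consistent ultrafilter with $\tau\cu\min\s$ check out, and your argument for~(2) is correct. However, there are two genuine gaps.

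The first is the DCC step in~(1). After pigeonholing you reduce to an infinite descending chain $a_1\succ a_2\succ\cdots$ inside $\{a:t\preceq a\}$ (or $\{a:t^*\prec a\}$) for a fixed $t\in\tau$, and assert that finite-dimensionality forbids this. But finite-dimensionality bounds the size of antichains (pairwise-transverse families), not the length of chains, and a chain $a_1\succ a_2\succ\cdots\succeq t$ together with its dual chain $a_1^*\prec a_2^*\prec\cdots\preceq t^*$ produces no transverse pairs at all, so it is perfectly compatible with dimension one. Concretely: let $\mc{P}=\{t,t^*\}\cup\{a_n,a_n^*:n\geq 1\}$ with $t\prec a_n$ for all $n$, $a_n\prec a_m$ for $n>m$, the dual relations, and no others. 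This is a $1$--dimensional pocset, $\{t\}$ is a maximal pairwise-transverse subset, and the only ultrafilter with $t\in\min\s$ is $\{t,a_1,a_2,\dots\}$, which is not DCC. What actually kills infinite descent in the settings the paper cares about is the finite-interval condition enjoyed by halfspace pocsets of $\CAT$ cube complexes (any two nested halfspaces have only finitely many halfspaces between them); presumably that, or a DCC/discreteness hypothesis, is part of Guralnik's setup and should be invoked here rather than bare finite-dimensionality.

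The second gap is that your part~(3) is, by your own admission, incomplete: you set up the flip iteration but do not show it terminates, i.e.\ that $|\s_1\triangle\s_2|$ is finite. That finiteness is essentially the whole content of~(3). (Also, inside your ``key claim'' the ``variant of the~(2) argument'' does not close: in the case $b'\prec t'$ for some $t'\in\tau_1$ with $t'\neq t$, a minimal element of $\s_2$ being a common lower bound of two transverse elements of $\tau_1$ is not forbidden, so no contradiction is reached.) Finally, for~(4), passing from ``finitely many orbits of vertices'' to cocompactness is not automatic; the cleaner route, and likely what the paper means by ``immediate from~(1) and~(2)'', is that~(1) and~(2) give a $\Delta$--equivariant surjection from maximal pairwise-transverse subsets onto maximal cubes of $X(\mc{P},\s)$, and finitely many orbits of maximal cubes yields cocompactness in a finite-dimensional complex.
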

\begin{proof}
Parts~(1)--(3) follow from Proposition~3.1 and Corollary~3.3 in \cite{Guralnik}. In particular,  $\Delta$ leaves invariant 
the almost-equality class of any DCC ultrafilter $\s$, thus inducing an action $\Delta\acts X(\mc{P},\s)$. Finally, part~(4) 
is immediate from part~(1) and~(2).
\end{proof}

\subsection{Actions, essentiality, hyperplane-essentiality, and skewering}\label{subsec:actions}
Given a group action $G\acts X$ and a hyperplane $\mf{w}\in\mscr{W}(X)$, we denote by $G_{\mf{w}}\leq G$ the stabiliser of 
$\mf{w}$. The following is e.g.~\cite[Exercise~1.6]{Sageev}.

\begin{lem}\label{cocompact hyperplanes}
Let a group $G$ act cocompactly on $X$. For every hyperplane $\mf{w}\in\mscr{W}(X)$, the action $G_{\mf{w}}\acts\mf{w}$ is 
cocompact.
\end{lem}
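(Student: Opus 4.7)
The plan is to translate cocompactness of $G\acts X$ into the existence of finitely many $G$-orbits of cubes of $X$, and then show that for each such orbit only finitely many $G_{\mf w}$-orbits of midcubes of $\mf w$ can arise. Since $X/G$ is compact and $G$ acts cellularly, there are finitely many $G$-orbits of cubes in $X$; fix representatives $c_1,\ldots,c_n$. Recall that the cubes of $\mf w$, viewed as a $\CAT$ cube complex in its own right, are precisely the midcubes $c\cap\mf w$ as $c$ ranges over cubes of $X$ that are dual to $\mf w$, and this correspondence is $G_{\mf w}$-equivariant. It therefore suffices to show that the collection of cubes of $X$ dual to $\mf w$ forms only finitely many $G_{\mf w}$-orbits.

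For this, I would fix $c_i$ and study the set $E_i=\{g\in G:\, gc_i\text{ is dual to }\mf w\}$. A cube $c$ of $X$ is dual to $\mf w$ iff $\mf w$ lies in the finite collection of hyperplanes dual to the parallelism classes of edges of $c$. Consequently, $gc_i$ is dual to $\mf w$ iff $g^{-1}\mf w$ belongs to the finite set $\{\mf u_1,\ldots,\mf u_{\dim c_i}\}$ of hyperplanes dual to $c_i$. For each $j$ such that $\mf u_j$ and $\mf w$ are $G$-equivalent, choose a witness $h_j\in G$ with $h_j\mf u_j=\mf w$; then $\{g\in G:g^{-1}\mf w=\mf u_j\}$ is the right coset $G_{\mf w}h_j$. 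Thus $E_i$ is a finite union of right cosets of $G_{\mf w}$, and the cubes $\{gc_i:g\in E_i\}$ split into finitely many $G_{\mf w}$-orbits. Summing over $i=1,\ldots,n$ yields finitely many $G_{\mf w}$-orbits of cubes of $X$ dual to $\mf w$, hence of cubes of $\mf w$, which is precisely the desired cocompactness of $G_{\mf w}\acts\mf w$.

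The one step to handle carefully is the coset-decomposition of $E_i$. The stabiliser of $c_i$ in $G$ may permute the hyperplanes $\mf u_1,\ldots,\mf u_{\dim c_i}$, so the cosets obtained for different indices $j$ could overlap; this only \emph{reduces} the count of $G_{\mf w}$-orbits, so the finiteness conclusion is unaffected, but it is the point where a careless bookkeeping could mis-identify orbits. Everything else reduces to the standard correspondence between cubes of the hyperplane $\mf w$ and cubes of $X$ dual to it.
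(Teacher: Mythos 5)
Your proof is correct and is essentially the same argument as the paper's: both exploit cocompactness of $G\acts X$ to get finitely many $G$-orbits of cubes (the paper uses a compact subcomplex $K$, you use orbit representatives $c_i$), and both then show that edges/cubes dual to $\mf w$ fall into finitely many $G_{\mf w}$-orbits by observing that only finitely many translates of $\mf w$ can meet a fixed compact piece. The paper works at the level of edges (vertices of $\mf w$) and constructs the relevant element of $G_{\mf w}$ explicitly, whereas you phrase it as a coset decomposition of $E_i$ covering all dimensions of cubes at once; this is a cosmetic difference in bookkeeping, not a different route.
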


\begin{proof}
Let $K$ be a compact subcomplex of $X$ such that $G\cdot K=X$.  Since $K$ is compact, there are only finitely many translates 
$g_1\mf w,\dots,g_k\mf w$ of $\mf w$ that are dual to edges of $K$.   Let 
$L=\bigcup_{j=1}^kg_j^{-1}K$.  Then $L$ is compact.   
 
Let $e$ be an edge dual to $\mf w$.  Choose $g\in G$ such that $e\subseteq gK$.  Then $g^{-1}e$ is dual to $g_i\mf w$ for some 
$i\leq k$.  Let $h=g_i^{-1}g^{-1}$.  Then $e$ and $he$ are dual to $\mf w$, and $he$ is also dual to $h\mf w$.  Hence $h\in 
G_{\mf w}$.  Now, $he$ is an edge of $g_i^{-1}K$ and is thus an edge of $L$.  So $G_{\mf w}\cdot(\mf w\cap L)$ contains 
every vertex of $\mf w$.  This shows that $G_{\mf w}$ acts on $\mf w$ with finitely many orbits of vertices.

Since the above argument can be applied to the first cubical subdivision of $X$, there are finitely many $G_{\mf w}$--orbits of vertices in $\mf w$, when the latter is given the cubical structure 
coming from the cubical subdivision.  In particular, there is a $G_{\mf w}$--finite set of vertices in $\mf w$ (in the subdivision) containing the barycentre of each maximal cube of $\mf w$ (in the 
original cubical structure).  Hence $G_{\mf w}$ acts on $\mf w$ with finitely many orbits of cubes, as required. 
\end{proof}

\begin{defn}[Skewering]\label{defn:skewering}
Let $g\in\Aut(X)$ and let $\mf w\in\mscr{W}(X)$.  We say that $g$ \emph{skewers} $\mf w$ if there is a halfspace $\mf h$ 
associated to $\mf w$ and an integer $n\neq 0$ such that $g^n\mf h\subsetneq \mf h$.  In this case, we also say $g$ 
\emph{skewers} the halfspace $\mf h$.
\end{defn}

\begin{defn}[Essential stuff]\label{defn:essential_stuff}
The CAT(0) cube complex $X$ is \emph{essential} if, for each hyperplane $\mf w$, each of the associated halfspaces contains 
points in $X$ arbitrarily far from $\mf w$.  If $G\acts X$, we say that the action is \emph{essential} if, for some (hence any) 
$x_0\in X^{(0)}$, and each hyperplane $\mf w$, each of the associated halfspaces contains points in $G\cdot x_0$ arbitrarily far 
from $\mf w$. In the latter case, we also say that $X$ is \emph{$G$--essential}.

The cube complex $X$ is \emph{hyperplane-essential} if each hyperplane $\mf w$, regarded itself as a $\CAT$ cube complex, is 
essential.  The action of $G$ is \emph{hyperplane-essential} if each hyperplane $\mf w$ has the property that $G_{\mf w}$ 
acts essentially on $\mf w$.
\end{defn}

\begin{rmk}
\begin{enumerate}
\item[]
\item Suppose $X$ is finite-dimensional. By \cite[Proposition~3.2]{CS}, the action $G\acts X$ is essential if and only if every hyperplane of $X$ is skewered by an element of $G$. Similarly, $G\acts X$ is hyperplane-essential if and only if, whenever $\mf{u},\mf{w}\in\mscr{W}(X)$ are transverse, there exist $g\in G_{\mf{w}}$ skewering $\mf{u}$ and $h\in G_{\mf{u}}$ skewering $\mf{w}$.
\item If $G$ acts cocompactly, then $X$ is essential if and only if it is $G$--essential. Similarly, by Lemma~\ref{cocompact hyperplanes}, $X$ is a hyperplane-essential cube complex if and only $G\acts X$ is a hyperplane-essential action.
\end{enumerate}
\end{rmk}

The following is \cite[Proposition~2.11]{BF1} and a proof is given in~\cite[Proposition 1]{LargeFacing}.

\begin{prop}\label{corner}
Let $X$ be cocompact, locally finite, essential, hyperplane-essential and irreducible.  For any two transverse halfspaces 
$\mf{h}_1$ and $\mf{h}_2$, there exists a halfspace ${\mf{k}\cu\mf{h}_1\cap\mf{h}_2}$.
\end{prop}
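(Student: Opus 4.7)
The plan is to use hyperplane-essentiality to produce elements of $G := \Aut(X)$ stabilising $\mf{w}_1$ or $\mf{w}_2$ that skewer the codimension-two hyperplane $\mf{w}_1 \cap \mf{w}_2$ inside them, and then combine these to push an entire hyperplane into the corner $\mf{h}_1 \cap \mf{h}_2$. The key geometric tool is the Helly property for hyperplanes in $\CAT$ cube complexes---three pairwise transverse hyperplanes share a common vertex---which will convert ``skewering inside $\mf{w}_1$'' into genuine disjointness in $X$. Irreducibility will ultimately be needed to prevent the iteration from stabilising in a reducible configuration such as $\R\times\R$, where no such halfspace $\mf{k}$ exists.

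\textbf{Main construction.} By Lemma~\ref{cocompact hyperplanes} and hyperplane-essentiality, $G_{\mf{w}_1}$ acts cocompactly and essentially on the locally finite subcomplex $\mf{w}_1$. Hence there exists $g \in G_{\mf{w}_1}$ skewering the hyperplane $\mf{w}_1 \cap \mf{w}_2$ of $\mf{w}_1$. After replacing $g$ by a power, I may assume $g$ acts stably without inversions and preserves each side of $\mf{w}_1$; orient so that $g(\mf{w}_1 \cap \mf{h}_2) \subsetneq \mf{w}_1 \cap \mf{h}_2$. I claim $g\mf{w}_2$ is disjoint from $\mf{w}_2$: otherwise $\mf{w}_1, \mf{w}_2, g\mf{w}_2$ would be pairwise transverse, and by Helly would share a common vertex, but
\[
g\mf{w}_2 \cap \mf{w}_2 \cap \mf{w}_1 \;=\; g(\mf{w}_1 \cap \mf{w}_2) \cap (\mf{w}_1 \cap \mf{w}_2) \;=\; \emptyset,
\]
since $g(\mf{w}_1 \cap \mf{w}_2)$ lies strictly inside the open halfspace $\mf{w}_1 \cap \mf{h}_2$ of $\mf{w}_1$. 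As $g\mf{w}_2$ is connected, disjoint from $\mf{w}_2$, and meets $\mf{w}_1$ inside $\mf{h}_2$, it lies entirely in $\mf{h}_2$; orienting $g\mf{h}_2$ as the side of $g\mf{w}_2$ away from $\mf{w}_2$, we obtain $g\mf{h}_2 \subsetneq \mf{h}_2$. A symmetric argument gives $h \in G_{\mf{w}_2}$ with $h\mf{h}_1 \subsetneq \mf{h}_1$ and $h\mf{w}_1 \cu \mf{h}_1$ disjoint from $\mf{w}_1$.

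\textbf{Crux and obstacle.} The main obstacle is combining $g$ and $h$ so that some iterate sends a hyperplane entirely into $\mf{h}_1 \cap \mf{h}_2$. The natural candidate $f := gh$ already satisfies $f\mf{h}_1 \subsetneq \mf{h}_1$ and $f\mf{h}_2 \subsetneq \mf{h}_2$, and a short computation using $g \in G_{\mf{w}_1}$ and the construction of $h$ yields $f\mf{w}_1 \cap \mf{w}_1 = \emptyset$ and $f\mf{w}_2 \cap \mf{w}_2 = \emptyset$. It remains to show that, after enough iteration, $f^n\mf{w}_1$ (say) is also disjoint from $\mf{w}_2$ and so lies entirely in $\mf{h}_1 \cap \mf{h}_2$, yielding the desired $\mf{k}$ as the inner halfspace bounded by $f^n\mf{w}_1$. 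This is where irreducibility is essential: in a product like $\R\times\R$ the iterates $f^n\mf{w}_1$ can remain parallel to $\mf{w}_1$ and so stay transverse to $\mf{w}_2$ indefinitely. In the irreducible setting, I would close the argument either by a more refined Helly-type bookkeeping on triples involving $f^n\mf{w}_1$, $\mf{w}_2$ and a suitable conjugate, or by invoking Caprace--Sageev's rank-rigidity theorem to produce a contracting (``rank-one'') element whose axis escapes into the corner and which can be used in place of, or in combination with, $f$ to force the iteration into $\mf{h}_1 \cap \mf{h}_2$.
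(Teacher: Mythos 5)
The paper does not actually prove this proposition: it is cited directly from Proposition~2.11 of \cite{BF1}, so there is no internal argument to compare against. Evaluating your proposal on its own merits: the construction in your first two paragraphs is sound. The element $g\in G_{\mf{w}_1}$ exists by Lemma~\ref{cocompact hyperplanes} and hyperplane-essentiality, the Helly argument correctly upgrades ``$g(\mf{w}_1\cap\mf{w}_2)$ lies strictly inside $\mf{w}_1\cap\mf{h}_2$'' to ``$g\mf{w}_2$ is disjoint from $\mf{w}_2$ and contained in $\mf{h}_2$,'' the symmetric $h$ exists, and the verification that $f=gh$ satisfies $f\mf{h}_i\subsetneq\mf{h}_i$ and $f\mf{w}_i\cap\mf{w}_i=\emptyset$ for $i=1,2$ is a correct computation.

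However, your final paragraph does not close the argument, and this is a genuine gap rather than a routine omission. You correctly identify that the whole content of the proposition lies in showing that some $f^n\mf{w}_1$ leaves $\mf{w}_2$ — the $\R\times\R$ example shows exactly why this is not automatic — but you only gesture at two possible strategies. The first, ``more refined Helly-type bookkeeping,'' does not work as suggested: the triples at your disposal, such as $\{\mf{w}_1, f^n\mf{w}_1, \mf{w}_2\}$ or $\{\mf{w}_2, f^m\mf{w}_2, f^n\mf{w}_1\}$, always contain a disjoint pair (the $f$-orbit of either hyperplane forms a chain), so the Helly property for pairwise-transverse families never becomes applicable. The second, invoking Caprace--Sageev rank rigidity, is the plausible route, but it is itself a substantial theorem and the bridge from ``there exists a contracting isometry'' to ``some halfspace sits inside $\mf{h}_1\cap\mf{h}_2$'' is precisely the content you need and have not supplied. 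The standard way to make irreducibility bite here is via \emph{strongly separated} hyperplanes (Caprace--Sageev, Section~5, and Behrstock--Charney): in an irreducible, essential, cocompact $\CAT$ cube complex there is a halfspace $\mf{k}\subsetneq\mf{h}_1$ whose bounding hyperplane is strongly separated from $\mf{w}_1$, forcing $\mf{w}_2$ (which crosses $\mf{w}_1$) to miss $\partial\mf{k}$; one then has $\mf{k}\cu\mf{h}_2$ or $\mf{k}\cu\mf{h}_2^*$, and a further skewering argument in the second case produces the desired corner halfspace. None of this is carried out in your proposal, so as written it establishes the setup but not the proposition.
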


\begin{rmk}[Boundaries]\label{rem:boundaries}
Given a $\CAT$ cube complex $X$, we denote by $\partial_\infty X$ its visual boundary (with the cone topology).  Since 
$\ell_1$--convex subcomplexes, hyperplanes, and halfspaces are convex in the $\CAT$ metric, we have the following.  If $A$ is 
a convex subcomplex, hyperplane, or halfspace, the inclusion $A\hookrightarrow X$ extends to a continuous injection 
$\partial_\infty A\hookrightarrow\partial_\infty X$.

Throughout this paper, we will often be in the situation where $X$ admits a proper, cocompact action by a hyperbolic group 
$G$.  In this case, $\partial_\infty X$ is $G$--equivariantly homeomorphic to the Gromov boundary $\partial_\infty G$ of $G$.
\end{rmk}

We say that $X$ is \emph{reducible} if there exist nontrivial $\CAT$ cube complexes $A,B$ with 
$X\cong A\times B$. In this case, every hyperplane of $A,B$ determines a hyperplane of $X$, thus giving rise to a partition $\mscr{W}(X)=\mscr{W}(A)\sqcup\mscr{W}(B)$. Every hyperplane in the set $\mscr{W}(A)$ is transverse to every hyperplane in $\mscr{W}(B)$. If $X$ is not reducible, we say that $X$ is \emph{irreducible}. 

We will often require the following fact about actions on $\CAT$ cube complexes, which is Proposition~2.6 in \cite{CS}.

\begin{prop}[De Rham decomposition]\label{prop:de_rham}
     Let $X$ be finite-dimensional.  Then there is a canonical decomposition 
$X=\prod_{i=1}^mX_i$, for irreducible $\CAT$ cube complexes $X_1,\dots,X_m$, which is preserved by $\Aut(X)$ (possibly permuting the factors).  Hence the canonical embedding 
$\Aut(X_1)\times\cdots\times\Aut(X_m)\hookrightarrow\Aut(X)$ has finite-index image.
\end{prop}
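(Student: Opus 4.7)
My plan is to build the decomposition from a transversality-based equivalence relation on hyperplanes and then transfer it to the cube complex via the pocset formalism summarised in Section~\ref{sec:pocsets}. Define the relation $\sim$ on $\mscr{W}(X)$ by declaring $\mf{u}\sim\mf{w}$ whenever $\mf{u}$ and $\mf{w}$ are either equal or non-transverse, and take its transitive closure to obtain equivalence classes $C_1,\dots,C_m$. Since hyperplanes drawn from distinct classes must be transverse, and $X$ is finite-dimensional, we can only have finitely many classes: any infinite family with one representative per class would consist of pairwise-transverse hyperplanes, contradicting the dimension bound.

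For each class $C_i$, I would set $\mc{P}_i\cu\mscr{H}(X)$ to be the subpocset of halfspaces bounded by the hyperplanes in $C_i$, with order and involution inherited from $(\mscr{H}(X),\cu,*)$. Fix a basepoint $v\in X^{(0)}$ and let $\s$ be its canonical DCC ultrafilter; let $\s_i=\s\cap\mc{P}_i$. Each $\s_i$ is itself a DCC ultrafilter on $\mc{P}_i$, and the associated cube complex $X_i:=X(\mc{P}_i,\s_i)$ is well defined by Lemma~\ref{pocset lemma}. The map sending an ultrafilter $\sigma\cu\mscr{H}(X)$ to the tuple $(\sigma\cap\mc{P}_1,\dots,\sigma\cap\mc{P}_m)$ induces a bijection between vertices of $X$ and tuples of vertices of $\prod_i X_i$, with inverse $(\sigma_1,\dots,\sigma_m)\mapsto\bigcup_i\sigma_i$; the union is an ultrafilter precisely because cross-class halfspaces are pairwise transverse, so no $\subseteq$--incompatibility can arise between them. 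This bijection is a cubical isomorphism because the maximal pairwise-transverse subsets of $\mscr{H}(X)$ are exactly disjoint unions of maximal pairwise-transverse subsets of the $\mc{P}_i$.

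Each factor $X_i$ is irreducible: a nontrivial product splitting of $X_i$ would yield a partition of $C_i$ into two nonempty sets in which every hyperplane on one side is transverse to every hyperplane on the other, which is forbidden by the very definition of $\sim$. For the canonicity statement, note that any $g\in\Aut(X)$ preserves transversality and equality of hyperplanes, hence preserves the equivalence relation $\sim$ and therefore permutes the classes $C_1,\dots,C_m$. Translating through the isomorphism $X\cong\prod_i X_i$, this says that $g$ permutes the factors (necessarily within isomorphism classes of $X_i$).

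Finally, the permutation action yields a homomorphism $\Aut(X)\to\Sym(m)$, whose image is finite. The kernel consists of automorphisms that preserve each factor, and an automorphism preserving each factor is completely determined by its restriction to each $X_i$, since the vertex set of $X$ is parametrised by tuples of vertices in the $X_i$; conversely any tuple of factor automorphisms extends to an automorphism of the product. Hence the kernel equals the image of the canonical embedding $\Aut(X_1)\x\cdots\x\Aut(X_m)\hookrightarrow\Aut(X)$, which therefore has finite index. The main technical obstacle is verifying cleanly that the transversality-based partition of $\mscr{H}(X)$ yields an honest pocset-level product decomposition; the pocset dictionary from Section~\ref{sec:pocsets} is what keeps this calculation short.
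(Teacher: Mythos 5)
Your proof is correct, and it follows the standard transversality-graph argument: the paper itself does not prove this statement but cites it as Proposition~2.6 in \cite{CS}, where essentially the same approach appears (partition $\mscr{W}(X)$ into the connected components of the non-transversality relation, verify the resulting pocset splits as a ``transverse join'' and hence that the dual complex splits as a product, and observe that $\Aut(X)$ permutes the classes). One small wording note: to see the bijection on vertices is a cubical isomorphism, it is cleaner to check it preserves adjacency (ultrafilters differing by a single pair $\{\mf h,\mf h^*\}$), since median graphs determine their cube complexes, than to invoke maximal pairwise-transverse families directly, though your version can be made to work.
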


Later, when working with a geometric action $G\acts X$, it will often be useful to assume that $G$ is one-ended, enabling use of the 
following lemma:

\begin{lem}[One-ended cube complexes]\label{not one-ended}
Let $X$ be one-ended and essential. Then there does not exist a partition $\mscr{W}(X)=\mc{A}\sqcup\mc{B}$ such that $\mc{A},\mc{B}$ are nonempty and no element of $\mc{A}$ is transverse to an element of $\mc{B}$.
\end{lem}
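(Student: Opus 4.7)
I would argue by contradiction. Suppose such a partition $\mscr{W}(X) = \mc{A} \sqcup \mc{B}$ exists. Pick $\mf{a} \in \mc{A}$ and $\mf{b} \in \mc{B}$; as $\mf{a}, \mf{b}$ are distinct and non-transverse, they are disjoint. Choosing halfspaces, I may arrange that $\mf{b} \cu \mf{a}^+$ and $\mf{a} \cu \mf{b}^+$, so that the outer halfspaces $\mf{a}^-$ and $\mf{b}^-$ are disjoint. By essentiality, each of $\mf{a}^-, \mf{b}^-$ contains vertices arbitrarily far from its bounding hyperplane; moreover, since $X$ is one-ended, every hyperplane of $X$ must be unbounded, because cutting $X$ along a bounded hyperplane would leave two unbounded halfspaces and hence produce at least two ends of $X$.

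The key structural observation is that no hyperplane of $X$ can be transverse to both $\mf{a}$ and $\mf{b}$: such a hyperplane would, by the no-cross-transversality assumption, lie simultaneously in $\mc{A}$ and in $\mc{B}$. Applying the bridge decomposition $B(\mf{a},\mf{b}) \cong \pi_{\mf{a}}(\mf{b}) \times H$, the absence of common transversals forces $\pi_{\mf{a}}(\mf{b})$ to be a single vertex, so the bridge $B(\mf{a},\mf{b})$ is a combinatorial geodesic interval. In particular, the set of hyperplanes separating $\mf{a}$ from $\mf{b}$ is finite, with totally-ordered halfspaces.

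The plan is then to produce two distinct ends of $X$, contradicting one-endedness. Concretely, I would construct combinatorial geodesic rays $\gamma^- \cu \mf{a}^-$ and $\gamma^+ \cu \mf{b}^-$ issuing from the two endpoints of the bridge (using local finiteness together with essentiality via a K\"onig's-lemma argument) and argue that they represent distinct ends of $X$. Distinctness should follow from the interval structure of $B(\mf{a},\mf{b})$: any combinatorial path joining deep points of the two rays must cross both $\mf{a}$ and $\mf{b}$, and the partition combined with the totally-ordered bridge should force the crossings of each such path to happen in a controlled region, eventually yielding a compact separator in $X$.

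The main obstacle is precisely this last step. Since $\mf{a}$ and $\mf{b}$ are themselves unbounded (as forced by one-endedness above), a compact subcomplex of $X$ cannot cover the full carriers of $\mf{a}$ and $\mf{b}$, so a path from $\mf{a}^-$ to $\mf{b}^-$ can in principle cross $\mf{a}$ or $\mf{b}$ arbitrarily far from $B(\mf{a},\mf{b})$. To overcome this, I would iterate the dichotomy by noting that $\mf{a}$, viewed as a cube complex in its own right, has all its hyperplanes in $\mc{A}$ (they come from $X$-hyperplanes transverse to $\mf{a}$, which must lie in $\mc{A}$), and likewise for $\mf{b}$. Inducting on the dimension of $X$, or arguing diagonally along the unbounded directions within $\mf{a}$ and $\mf{b}$, should yield the required compact separator and thus the desired second end of $X$; the combinatorics of this iteration is the delicate heart of the argument.
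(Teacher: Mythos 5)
Your approach is genuinely different from the paper's, and it has a gap that you yourself flag. The paper disposes of this in three lines by invoking Lemma~2 of Niblo's paper \cite{Niblo-cut}: a partition of $\mscr{W}(X)$ into two nonempty classes with no transversality between them forces the existence of a \emph{cut vertex} $v$, i.e.\ a vertex with $X-\{v\}$ disconnected. Essentiality then makes each component of $X-\{v\}$ unbounded (a bounded component would be trapped in a bounded halfspace dual to an edge at $v$), so $X$ has at least two ends, contradiction.

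Your early observations are all correct and on the right track: $\mf a,\mf b$ are disjoint, no hyperplane is transverse to both (else it would lie in $\mc A\cap\mc B$), the bridge $B(\mf a,\mf b)$ degenerates to a geodesic interval. But from there you aim at producing a \emph{compact separator} that catches every path between the deep rays, and you correctly identify that the unboundedness of the hyperplanes $\mf a,\mf b$ makes this problematic: nothing prevents a path from crossing $\mf a$ and $\mf b$ arbitrarily far from the bridge. The iterated/inductive fix you sketch is not spelled out, and I don't see how it closes; this is a real gap, not a cosmetic one.

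The missing idea is that the separator should be a \emph{single vertex}, not a compact subcomplex slicing the carriers. For a fixed basepoint, the set of vertices $w$ with $\mscr{W}(v_0|w)\cu\mc A$ is a convex ``$\mc A$-block'' containing $v_0$, and similarly for $\mc B$; because no $\mc A$-hyperplane is transverse to any $\mc B$-hyperplane, any interval $I(x,y)$ decomposes as an $\mc A$-interval followed by a $\mc B$-interval (or vice versa), so $X$ is a tree-like union of $\mc A$-blocks and $\mc B$-blocks glued along single vertices, and any gluing vertex is a cut vertex. This is precisely what Niblo's lemma packages. Once you have the cut vertex, essentiality immediately gives two ends, with no need to chase rays or control paths far from the bridge. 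If you want a self-contained proof rather than a citation, reconstructing that block decomposition is the cleaner way to finish.
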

\begin{proof}
Suppose for the sake of contradiction that such a partition of $\mscr{W}(X)$ exists. By Lemma~2 in \cite{Niblo-cut}, there exists a vertex $v\in X$ such that $X-\{v\}$ is disconnected. Since $X$ is essential, each connected component of $X-\{v\}$ is unbounded. This shows that $X$ has at least two ends.
\end{proof}

\begin{defn}[Halfspace-stabiliser]\label{defn:halfspace_stabiliser}
Let $G$ be a group acting on $X$.  Let $\mf w\in\mscr{W}(X)$ and let $\mf h,\mf 
h^*$ be the associated halfspaces.  The \emph{halfspace-stabiliser} $G^0_{\mf w}$ is the kernel of the natural action of 
$G_{\mf w}$ on $\{\mf h,\mf h^*\}$ (which has index at most $2$ in $G_{\mf{w}}$).
\end{defn}

Recall that a subgroup $H$ of a group $G$ is \emph{separable} if for all $g\in G- H$, there is a finite-index 
subgroup $G'\leq G$ such that $H\leq G'$ and $g\not\in G'$.

\begin{lem}[Large-girth covers]\label{girthy cover}
Let a group $G$ act properly, cocompactly and  with separable halfspace-stabilisers on $X$. Then, for every $n\geq 1$, there 
exists a finite-index subgroup $H\lhd G$ such that:
\begin{itemize}
\item $H\acts X$ has no hyperplane inversions;
\item for every $\mf{w}\in\mscr{W}(X)$, any two distinct elements of $H\cdot\mf{w}$ are disjoint and at distance $\geq n$.
\end{itemize}
\end{lem}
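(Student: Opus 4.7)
The plan is to combine cocompactness of each hyperplane-stabiliser on its hyperplane (Lemma~\ref{cocompact hyperplanes}) with separability of halfspace-stabilisers. Using cocompactness of $G\acts X$, pick finitely many orbit representatives $\mf{w}_1,\dots,\mf{w}_k$ of hyperplanes. For each $i$, introduce the ``bad set''
\[
T_i=\{g\in G\mid g\mf{w}_i\neq\mf{w}_i \text{ and } d(g\mf{w}_i,\mf{w}_i)<n\}\;\cup\;(G_{\mf{w}_i}\setminus G^0_{\mf{w}_i}),
\]
so that membership of $h$ in some $T_i$ is exactly what would obstruct $h$ from satisfying the required conclusion on the $G$--orbit of $\mf{w}_i$. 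The goal becomes producing a finite-index normal subgroup $H\lhd G$ disjoint from each $T_i$.

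The next step is to show that each $T_i$ is a finite union of $G^0_{\mf{w}_i}$--double cosets, none of whose representatives lies in $G^0_{\mf{w}_i}$. The second piece is either empty or is the single double coset $G^0_{\mf{w}_i}g_i'G^0_{\mf{w}_i}$ determined by an inversion $g_i'$. For the first piece, Lemma~\ref{cocompact hyperplanes} together with properness of $G\acts X$ shows that the hyperplanes in $G\mf{w}_i$ within distance less than $n$ of $\mf{w}_i$ form finitely many $G_{\mf{w}_i}$--orbits: a compact fundamental domain $K$ for $G_{\mf{w}_i}\acts\mf{w}_i$ gives a compact neighbourhood meeting every such hyperplane in some $G_{\mf{w}_i}$--translate, and only finitely many hyperplanes meet a compact set. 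Picking orbit representatives $\mf{u}_j=g_{ij}\mf{w}_i$ decomposes this piece as $\bigsqcup_j G_{\mf{w}_i}g_{ij}G_{\mf{w}_i}$. Since $G^0_{\mf{w}_i}$ has index at most $2$ in $G_{\mf{w}_i}$, this refines into finitely many $G^0_{\mf{w}_i}$--double cosets $G^0_{\mf{w}_i}f_{i,\ell}G^0_{\mf{w}_i}$; each $f_{i,\ell}$ moves $\mf{w}_i$ off of $\mf{w}_i$, hence lies outside $G^0_{\mf{w}_i}$.

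Separability of $G^0_{\mf{w}_i}$ now yields a finite-index subgroup $K_i\leq G$ containing $G^0_{\mf{w}_i}$ and avoiding all $f_{i,\ell}$; the standard double-coset argument (if $h_1f_{i,\ell}h_2\in K_i$ with $h_1,h_2\in G^0_{\mf{w}_i}\leq K_i$, then $f_{i,\ell}\in K_i$) gives $K_i\cap T_i=\emptyset$. Setting $K=\bigcap_i K_i$ and letting $H$ be the normal core of $K$ in $G$ yields a finite-index normal subgroup with $H\cap T_i=\emptyset$ for every $i$.

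Finally, one verifies the conclusion. An arbitrary hyperplane has the form $a\mf{w}_i$; for $h\in H$, normality yields $a^{-1}ha\in H\leq K_i$, so $a^{-1}ha\notin T_i$. Translating by $a$ shows that $h$ neither inverts $a\mf{w}_i$ nor carries it to a distinct hyperplane within distance less than $n$. I expect the main technical obstacle to be the orbit-counting step identifying $T_i$ as a \emph{finite} union of double cosets, since it requires a careful interplay between properness of $G\acts X$ and cocompactness of $G_{\mf{w}_i}\acts\mf{w}_i$ to control hyperplanes in a neighbourhood of the unbounded set $\mf{w}_i$; once this finiteness is in hand, the separability and normal-core arguments are routine.
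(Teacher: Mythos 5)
Your proof is correct and takes the same overall approach as the paper: pick orbit representatives $\mf{w}_1,\dots,\mf{w}_k$, produce for each a finite-index subgroup avoiding the ``bad'' elements via separability of halfspace-stabilisers, intersect, normalise, and conclude by conjugation. The paper compresses the separability step into a single sentence (``Since halfspace-stabilisers are separable, there exist subgroups $H_i\leq G$ such that any two elements of $H_i\cdot\mf{w}_i$ are at distance $\geq n$ and no element of $H_i$ swaps the sides of $\mf{w}_i$''), whereas you spell it out: you identify the set $T_i$ of bad elements as a finite union of $G^0_{\mf{w}_i}$--double cosets — with the finiteness coming from properness of $G\acts X$ plus cocompactness of $G_{\mf{w}_i}\acts\mf{w}_i$ (Lemma~\ref{cocompact hyperplanes}) and local finiteness of $X$ — and then run the standard double-coset separability argument. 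This is the content the paper leaves implicit, and your version is a legitimate unpacking of it rather than a different proof.
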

\begin{proof}
Let $\mf{w}_1,\dots,\mf{w}_k\in\mscr{W}(X)$ be such that $G\cdot\{\mf{w}_1,\dots,\mf{w}_k\}=\mscr{W}(X)$.  

Since 
halfspace-stabilisers are separable, 
there exist subgroups $H_i\leq G$ such 
that any two elements of $H_i\cdot\mf{w}_i$ are at distance $\geq n$ and no element of $H_i$ swaps the sides of $\mf{w}_i$.

Up to passing to further finite-index subgroups, we can assume that $H_i\lhd G$.
Set $H=H_1\cap\dots\cap H_k$.

Given any $\mf{w}\in\mscr{W}(X)$, there exist $1\leq i\leq k$ and $g\in G$ with $\mf{w}=g\mf{w}_i$.  If $h\in H$ and 
$\mf{w}\neq h\mf{w}$, we have $\mf{w}_i\neq g^{-1}hg\mf{w}_i$. Since $H$ is normal in $G$, the element $g^{-1}hg$ lies in 
$H$, hence $d(\mf{w},h\mf{w})=d(\mf{w}_i,g^{-1}hg\mf{w}_i)\geq n$. A similar argument shows that $H\acts X$ has no hyperplane 
inversions.
\end{proof}

\begin{lem}\label{disjoining hyperplanes}
Let $G$ be a Gromov-hyperbolic group, with a proper cocompact action $G\acts X$.  Given essential hyperplanes 
$\mf{w}_1,\mf{w}_2$ and $n\geq 1$, there exists $\mf{w}_2'\in G\cdot\mf{w}_2$ such that $d(\mf{w}_1,\mf{w}_2')\geq n$.
\end{lem}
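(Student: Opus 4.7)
The plan is an orbit-counting argument: among the $G$-translates of $\mf{w}_2$, only finitely many $G_{\mf{w}_1}$-orbits contain a translate within distance $n$ of $\mf{w}_1$, while the set of $G_{\mf{w}_1}$-orbits on $G\cdot\mf{w}_2$ is itself infinite. Any translate outside the ``close'' orbits then satisfies $d(\mf{w}_1,\mf{w}_2')\geq n$.

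For the ``close orbits are finite'' direction, suppose $h_1\mf{w}_2,h_2\mf{w}_2,\dots$ lie in distinct $G_{\mf{w}_1}$-orbits with $d(\mf{w}_1,h_i\mf{w}_2)<n$ for all $i$, and choose vertices $y_i\in\mf{w}_1$ and $z_i\in h_i\mf{w}_2$ of the cubical subdivision with $d(y_i,z_i)<n$. By Lemma~\ref{cocompact hyperplanes}, $G_{\mf{w}_1}$ acts cocompactly on $\mf{w}_1$, so after replacing each $h_i\mf{w}_2$ by a suitable $G_{\mf{w}_1}$-translate we may assume all $y_i$ lie in a fixed compact fundamental domain $K\subset\mf{w}_1$. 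Then the $z_i$ lie in the bounded $n$-neighbourhood of $K$, a finite vertex set; since each vertex is contained in only finitely many hyperplanes (local finiteness plus finite-dimensionality of $X$), only finitely many translates $h_i\mf{w}_2$ can occur, contradicting the assumption.

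For the ``infinitely many orbits'' direction, one must show that $G_{\mf{w}_1}\backslash G/G_{\mf{w}_2}$ is infinite. Since $G$ is hyperbolic and the action is proper and cocompact, both hyperplane stabilisers are quasiconvex subgroups of $G$. In the non-degenerate case --- which is where the lemma requires any content --- both stabilisers have infinite index, and so their limit sets $\Lambda(G_{\mf{w}_i})\subsetneq\partial_\infty G$ are proper. Picking $\xi\in\partial_\infty G\setminus\Lambda(G_{\mf{w}_1})$ and elements $g_k\to\xi$, the distance $d(G_{\mf{w}_1},g_kG_{\mf{w}_2})$ tends to infinity (the cosets $g_kG_{\mf{w}_2}$ have limit sets accumulating at $\xi$, away from $\Lambda(G_{\mf{w}_1})$), so the $g_k$ represent infinitely many distinct double cosets. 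Degenerate cases --- for instance $G$ virtually cyclic, or a hyperplane stabiliser of finite index, which essentially forces $X$ to split off an $\R$-factor --- can be verified by direct inspection.

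Combining the two directions produces a $G_{\mf{w}_1}$-orbit of translates of $\mf{w}_2$ entirely at distance $\geq n$ from $\mf{w}_1$, and any representative serves as $\mf{w}_2'$. The main obstacle is the ``infinitely many orbits'' step: controlling the double-coset space invokes the boundary theory of quasiconvex subgroups in hyperbolic groups and requires separate treatment of the degenerate cases.
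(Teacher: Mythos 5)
Your approach --- counting $G_{\mf{w}_1}$-orbits on $G\cdot\mf{w}_2$ via double cosets --- is genuinely different from the paper's. The paper instead finds a translate of $\mf{w}_2$ \emph{disjoint} from $\mf{w}_1$: essentiality (via \cite[Proposition~3.2]{CS}) gives infinite chains in both orbits, and hyperbolicity (via \cite[Theorem~3.3]{Genevois}) rules out ubiquitous transversality between the two orbits. A disjoint translate $\mf{w}_2''$ is then pushed away with a skewering element $g$: the hyperplanes $\mf{w}_2'', g\mf{w}_2'',\ldots,g^{N-1}\mf{w}_2''$ all separate $\mf{w}_1$ from $g^N\mf{w}_2''$, so $d(\mf{w}_1,g^N\mf{w}_2'')\geq N$. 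That route is shorter, needs no case analysis, and stays entirely inside the cube complex rather than passing to $\partial_\infty G$.

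Your ``close orbits are finite'' half is correct as written.

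There is a genuine gap in ``infinitely many double cosets''. Choosing $\xi\in\partial_\infty G\setminus\L G_{\mf{w}_1}$ and $g_k\to\xi$ does \emph{not} ensure that the limit sets $g_k\L G_{\mf{w}_2}$ accumulate at $\xi$, nor that $d(G_{\mf{w}_1},g_kG_{\mf{w}_2})\to\infty$. Concretely, if $\xi\in\L G_{\mf{w}_2}\setminus\L G_{\mf{w}_1}$ and one takes $g_k\in G_{\mf{w}_2}$ with $g_k\to\xi$, then $g_kG_{\mf{w}_2}=G_{\mf{w}_2}$ for every $k$: the cosets, their limit sets, and the distances are all constant. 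The repair is to use Lemma~\ref{density} to choose a loxodromic $g$ with \emph{both} fixed points $g^\pm$ outside $\L G_{\mf{w}_1}\cup\L G_{\mf{w}_2}$; this is available because essentiality of $\mf{w}_1,\mf{w}_2$ forces each $G_{\mf{w}_i}$ to have infinite index, so each $\L G_{\mf{w}_i}$ is a closed, nowhere-dense proper subset of $\partial_\infty G$. With that choice, the convergence action of $\langle g\rangle$ gives $g^k\L G_{\mf{w}_2}\to\{g^+\}$, which eventually leaves any fixed neighbourhood of $\L G_{\mf{w}_1}$, so $d(G_{\mf{w}_1},g^kG_{\mf{w}_2})\to\infty$ and the double cosets $G_{\mf{w}_1}g^kG_{\mf{w}_2}$ are eventually pairwise distinct. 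This fix also makes the degenerate cases you defer unnecessary: essentiality already gives $[G:G_{\mf{w}_i}]=\infty$, and if $G$ is two-ended then $\L G_{\mf{w}_i}=\emptyset$ and the same loxodromic argument applies.
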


\begin{proof}
By Proposition~3.2 in \cite{CS}, the orbits $G\cdot\mf{w}_1$ and $G\cdot\mf{w}_2$ contain infinite chains of 
hyperplanes; moreover, we can 
assume that $G\cdot\mf{w}_1\neq G\cdot\mf{w}_2$. If every element of $G\cdot\mf{w}_1$ were transverse to an element of 
$G\cdot\mf{w}_2$, this would violate Theorem 3.3 of~\cite{Genevois2}. It follows that some $\mf{w}_2''\in 
G\cdot\mf{w}_2$ is 
disjoint from $\mf{w}_1$ and we can achieve the required distance from $\mf{w}_1$ by considering a hyperplane 
$\mf{w}_2'=g^N\mf{w}_2''$, where $g$ skewers $\mf{w}_2'$ and $N$ is large.
\end{proof}

Given a geodesic $\g\cu X$, we denote by $\mscr{W}(\g)$ the set of hyperplanes crossed by $\g$. 

\begin{lem}\label{small projection}
There exists a constant $D=D(\delta)$ such that the following holds.
For every $\delta$--hyperbolic $\CAT$ cube complex $X$ and every geodesic $\g\cu X$, 
there exists a hyperplane $\mf{w}\in\mscr{W}(\g)$ with $\diam\pi_{\mf{w}}(\g)\leq D$. 
\end{lem}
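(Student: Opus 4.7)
The plan is to recast the geometric statement combinatorially and then exploit $\delta$--hyperbolicity. Writing $x,y$ for the endpoints of $\gamma$, standard properties of gate-projections give, for every $\mf{w}\in\mscr{W}(\gamma)$,
\[
\diam\pi_{\mf{w}}(\gamma) \;=\; d_{\mf{w}}(\pi_{\mf{w}}(x),\pi_{\mf{w}}(y)) \;=\; \#\{\mf{u}\in\mscr{W}(\gamma):\mf{u}\text{ transverse to }\mf{w}\},
\]
since the hyperplanes of $\mf{w}$ separating $\pi_{\mf{w}}(x)$ from $\pi_{\mf{w}}(y)$ are exactly those hyperplanes of $X$ that are transverse to $\mf{w}$ and separate $x$ from $y$. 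The problem therefore becomes: find $\mf{w}\in\mscr{W}(\gamma)$ that is transverse to at most $D=D(\delta)$ other hyperplanes of $\mscr{W}(\gamma)$.

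I would argue by contradiction. Assume every $\mf{w}\in\mscr{W}(\gamma)$ has more than $D$ transverse partners in $\mscr{W}(\gamma)$, and enumerate $\mscr{W}(\gamma)=\{\mf{w}_1,\dots,\mf{w}_N\}$ in the order crossed by $\gamma$, with $p_i:=\gamma\cap\mf{w}_i$. Focus on $\mf{w}_1$: its transverse partners in $\mscr{W}(\gamma)$ are crossed at points $p_k$ with $k>1$, and their gate-projections onto $\mf{w}_1$ form more than $D$ distinct vertices of $\mf{w}_1$, each lying in the codimension-$2$ convex subcomplex $\mf{w}_1\cap\mf{w}_k$ (using that $\mf{w}_k$ is convex and contains $p_k$). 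In particular there is a transverse partner $\mf{u}=\mf{w}_k$ with gate-projection $q=\pi_{\mf{w}_1}(p_k)$ at distance $>D$ from $p_1$ inside $\mf{w}_1$. Since $\mf{w}_1$ is itself a convex (hence geodesic) subcomplex, this yields a geodesic triangle in $X$ with vertices $p_1,p_k,q$ whose two sides---a subsegment of $\gamma$ and a subsegment of $\mf{w}_1$---have length $>D$.

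The main obstacle is that the third side, from $p_k$ to $q$, can a priori also be long, so the triangle is not automatically long-and-thin. To overcome this, I would enumerate all $>D$ transverse partners of $\mf{w}_1$ in the order that their gate-projections appear along $\mf{w}_1$ starting from $p_1$; consecutive projections differ by a single edge of $\mf{w}_1$, and the corresponding geodesic quadrilaterals have genuinely short ``perpendicular'' sides. Iterating $\delta$--slimness across these consecutive quadrilaterals forces a subsegment of $\gamma$ of length $>D$ to remain within $\delta$ of $\mf{w}_1$. Since $\gamma$ crosses $\mf{w}_1$ transversally only at $p_1$, this contradicts the standard quantitative consequence of $\delta$--hyperbolicity that a geodesic which crosses a convex subcomplex must diverge from it at a controlled rate, escaping its $\delta$--neighbourhood within $C(\delta)$ steps on either side. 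Tracking these constants yields $D=D(\delta)$ and completes the proof.
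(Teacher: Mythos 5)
Your opening reduction---that $\diam\pi_{\mf{w}}(\g)$ equals the number of hyperplanes of $\mscr{W}(\g)$ transverse to $\mf{w}$---is correct and useful. The proof then breaks at the final step. The claim that ``a geodesic which crosses a convex subcomplex must diverge from it at a controlled rate, escaping its $\delta$--neighbourhood within $C(\delta)$ steps on either side'' is not a consequence of $\delta$--hyperbolicity, and is in fact false. A combinatorial geodesic can cross a hyperplane $\mf{w}$ and then fellow-travel it for an arbitrarily long time, crossing one hyperplane transverse to $\mf{w}$ after another; this is exactly the scenario in which $\diam\pi_{\mf{w}}(\g)$ is large, and it is perfectly compatible with hyperbolicity (picture a geodesic in a cubulated closed hyperbolic surface crossing a hyperplane at a small angle). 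Hyperbolicity controls the \emph{re-approach} of $\g$ to $\mf{w}$ after it has left a neighbourhood, not the length of the initial close passage. So there is no contradiction to be had from $\mf{w}_1$ having a large projection, and since you chose $\mf{w}_1$ simply as the first hyperplane crossed, there is no reason it should be the ``good'' one. (There is also a smaller imprecision earlier: the points $\pi_{\mf{w}_1}(p_k)$ need not be distinct, nor need consecutive ones differ by a single edge of $\mf{w}_1$; what is true is that the $T$ transverse partners contribute exactly $T$ edges to the geodesic $\pi_{\mf{w}_1}(\g)$, but the $p_k$ need not project to consecutive vertices.)

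The missing idea is the careful \emph{choice} of $\mf{w}$, which is the entire content of the lemma. The paper does not take the first hyperplane; it uses the $\delta$--dependent bound (from \cite{Genevois}) that any two transverse chains of halfspaces cannot both have length $\geq C(\delta)$, together with $\dim X\leq C$, to locate among $2Cd+1$ consecutive halfspaces entered by $\g$ a chain $\mf{h}_{-C}\supsetneq\dots\supsetneq\mf{h}_C$, and then takes $\mf{w}$ to bound the \emph{middle} halfspace $\mf{h}_0$. The point of choosing the middle of a chain is that only boundedly many hyperplanes separate the crossing point of $\mf{w}$ from each end of the chain; if $\pi_{\mf{w}}(\g)$ were nonetheless long, one could then build a chain $\mf{k}_1\supsetneq\dots\supsetneq\mf{k}_{C(2d+1)}$ in $\mscr{W}(\mf{w})$ and show that a length--$C$ subchain of the $\mf{k}_j$ is transverse to the length--$C$ chain $\mf{h}_1\supsetneq\dots\supsetneq\mf{h}_C$, violating the bound $C$. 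This is a combinatorial argument that hides all the $\delta$--thinness inside the chain bound, quite different from the thin-quadrilaterals picture you proposed; to repair your approach you would at minimum need a principled way to select $\mf{w}$ rather than taking $\mf{w}_1$, and a correct replacement for the false escape estimate.
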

\begin{proof}
There exists a constant $C=C(\delta)$ such that, given any two transverse chains of halfspaces, one of them must have 
cardinality $<C$ (see e.g. \cite[Theorem 3.3]{Genevois2}).  

Set $d=\dim X$, $\Delta=Cd(2d+1)$, $D=2\Delta$ and observe 
that $d$ (hence $D$) is bounded above in terms of $\delta$. Since gate-projections are $1$--Lipschitz, we can assume that the 
length of $\g$ is at least $D$.

Among any $2Cd+1$ halfspaces entered consecutively by $\g$, there exists a chain $\mf{h}_{-C}\supsetneq 
\dots\supsetneq\mf{h}_C$.  Let $x_-\in\mf{h}_0^*\cap\g$ and $x_+\in\mf{h}_0\cap\g$ be adjacent vertices of $X$ and let $\mf{w}$ 
be the hyperplane bounding $\mf{h}_0$. Note that $\mscr{W}(x_-|\mf{h}_C)$ and $\mscr{W}(x_+|\mf{h}_{-C}^*)$ each contain at 
most $2Cd$ hyperplanes (not the optimal bound). 

Suppose for the sake of contradiction that there exists $y\in\g\cap\mf{h}_C$ with 
$d(\pi_{\mf{w}}(x_-),\pi_{\mf{w}}(y))>\Delta$.  Then there exists a chain $\mf{k}_1\supsetneq \dots\supsetneq\mf{k}_{C(2d+1)}$ of 
halfspaces with $\pi_{\mf{w}}(x_-)\in\mf{k}_1^*$ and $\pi_{\mf{w}}(y)\in\mf{k}_{C(2d+1)}$. For all $i,j\geq 1$, we have 
$y\in\mf{h}_i\cap\mf{k}_j$, $\pi_{\mf{w}}(y)\in\mf{h}_i^*\cap\mf{k}_j$ and $x_-\in\mf{h}_i^*\cap\mf{k}_j^*$. Thus, either 
$\mf{h}_i\subsetneq\mf{k}_j$ or $\mf{h}_i$ and $\mf{k}_j$ are transverse. If $j>2Cd$, the halfspaces $\mf{h}_C$ and 
$\mf{k}_j$ must be transverse, as $\#\mscr{W}(x_-|\mf{h}_C)\leq 2Cd$. It follows that $\mf{h}_i$ and $\mf{k}_j$ are 
transverse for all $1\leq i\leq C$ and $2Cd+1\leq j\leq 2Cd+C$, violating our choice of $C$.

This proves that $d(\pi_{\mf{w}}(x_-),\pi_{\mf{w}}(y))\leq\Delta$ for every $y\in\g\cap\mf{h}_C$ and a  similar argument 
shows that $d(\pi_{\mf{w}}(x_+),\pi_{\mf{w}}(z))\leq \Delta$ for all $z\in\g\cap\mf{h}_{-C}^*$. We conclude that the 
projection $\pi_{\mf{w}}(\g\cap(\mf{h}_C\sqcup\mf{h}_{-C}^*))$ is contained in the $\Delta$--neighbourhood of 
$\pi_{\mf{w}}(x_-)=\pi_{\mf{w}}(x_+)$. Since $\pi_{\mf{w}}(\g)$ is an (unparametrised) geodesic, it all lies in the 
$\Delta$--neighbourhood of $\pi_{\mf{w}}(x_+)$, hence $\diam\pi_{\mf{w}}(\g)\leq 2\Delta=D$.
\end{proof}

\subsection{Hyperbolic groups and abstract hyperplanes.}\label{section:hypgps}

Let $G$ be a Gromov-hyperbolic group. Every infinite-order element $g\in G$ has exactly two fixed points in the Gromov 
boundary $\partial_{\infty}G$.  We denote by $g^+$ the stable fixed point and by $g^-$ the unstable one. The following is 
classical:

\begin{lem}\label{density}
The set $\{(g^+,g^-)\mid g \text{ infinite-order}\}$ is dense in $\partial_{\infty}G\x\partial_{\infty}G$.
\end{lem}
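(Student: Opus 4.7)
The plan is a ping-pong argument on $\partial_\infty G$. If $G$ is elementary (finite or virtually cyclic), the statement is either vacuous or an immediate check, so I assume $G$ is non-elementary. I then use three standard consequences of non-elementary hyperbolicity: the action of $G$ on $\partial_\infty G$ is minimal; every infinite-order $g\in G$ acts on $\partial_\infty G$ with north-south dynamics (attracting at $g^+$, repelling at $g^-$); and $\partial_\infty G$ is uncountable.

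I would first establish density at off-diagonal points. Fix distinct $\xi,\eta\in\partial_\infty G$ with disjoint open neighbourhoods $U\ni\xi$, $V\ni\eta$, and any infinite-order $h\in G$. By minimality, there exist $a,b\in G$ with $ah^+\in U$ and $bh^+\in V$. The loci where two of the four points $ah^+,ah^-,bh^+,bh^-$ coincide are contained in finitely many cosets of the virtually cyclic subgroups $\mathrm{Stab}(h^\pm)$, which have infinite index in $G$; a standard genericity argument then allows us to further arrange that the four points are pairwise distinct. Setting $\alpha:=aha^{-1}$ and $\beta:=bhb^{-1}$ yields two infinite-order elements with $\alpha^\pm=ah^\pm$ and $\beta^\pm=bh^\pm$.

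Next I execute the ping-pong. Choose pairwise-disjoint open neighbourhoods $U_\alpha^\pm,U_\beta^\pm$ of these four fixed points, with $U_\alpha^+\subseteq U$ and $U_\beta^+\subseteq V$. By north-south dynamics, for $N$ large enough $\alpha^N$ maps $\partial_\infty G\setminus U_\alpha^-$ into $U_\alpha^+$ and $\beta^{-N}$ maps $\partial_\infty G\setminus U_\beta^+$ into $U_\beta^-$. Since $U_\beta^-\cap U_\alpha^-=\emptyset$, the element $g_N:=\alpha^N\beta^{-N}$ sends $\partial_\infty G\setminus U_\beta^+$ into $U_\alpha^+$; iterating shows $g_N^k(x)\in U_\alpha^+$ for all $k\geq 1$ and all $x\notin U_\beta^+$, which forces $g_N$ to have infinite order. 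Its attracting fixed point $g_N^+$ is then the limit of these iterates, hence lies in $\overline{U_\alpha^+}\subseteq U$. A symmetric analysis of $g_N^{-1}=\beta^N\alpha^{-N}$ places $g_N^-\in\overline{U_\beta^+}\subseteq V$.

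Finally, to cover the diagonal, I approximate any $(\xi,\xi)$ by off-diagonal pairs $(\xi_n,\xi)\to(\xi,\xi)$ with shrinking neighbourhoods, apply the previous step to each pair, and extract a diagonal sequence $g_n$ with $(g_n^+,g_n^-)\to(\xi,\xi)$. The main technical obstacle is the bookkeeping in the ping-pong setup, in particular ensuring the four fixed points of $\alpha$ and $\beta$ are genuinely distinct so that their north-south dynamics on $\partial_\infty G$ do not interfere with one another; this is precisely where non-elementarity, via the uncountability of $\partial_\infty G$, is used essentially.
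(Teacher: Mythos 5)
The paper does not prove this lemma; it just labels it ``classical'' and cites nothing, so there is no in-paper argument to compare against. Your ping-pong proof is a standard and essentially correct way to establish the fact: conjugate a fixed loxodromic $h$ so that attracting fixed points land near $\xi$ and $\eta$, then use north-south dynamics of the two conjugates to decouple the repelling fixed points, and finally handle the diagonal by a limiting argument using perfectness of the boundary.

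Two points deserve a closer look. First, the preliminary case analysis is slightly off: for $G$ infinite virtually cyclic, $\partial_\infty G=\{p,q\}$ and every infinite-order $g$ satisfies $\{g^+,g^-\}=\{p,q\}$, so $\{(g^+,g^-)\}=\{(p,q),(q,p)\}$ is \emph{not} dense in the four-point square $\partial_\infty G\times\partial_\infty G$. The lemma as literally stated is therefore false in that case (and only vacuously true for finite $G$); it should be read with an implicit non-elementarity hypothesis, which matches every place the paper actually invokes it. Second, the passage from ``$g_N^k(x)\in U_\alpha^+$ for all $k\geq1$'' to ``$g_N^+\in U$'' requires $\overline{U_\alpha^+}\subseteq U$, not merely $U_\alpha^+\subseteq U$; since $U$ is open this is easily arranged, but as written it is a gap. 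Finally, the ``genericity'' step ensuring that $ah^+,ah^-,bh^+,bh^-$ are pairwise distinct is compressed but can be made rigorous: after choosing $a$, the set of admissible $b$ (those with $bh^+$ in a nonempty open subset of $V$ avoiding $ah^{\pm}$) meets infinitely many cosets of $\mathrm{Stab}(h^+)=\mathrm{Stab}(h^-)=E_0(h)$, the maximal virtually cyclic subgroup fixing both endpoints, and the assignment $bE_0(h)\mapsto(bh^+,bh^-)$ is injective, so one can always avoid the at most two cosets that would force $bh^-\in\{ah^+,ah^-\}$. With these clarifications the argument is complete.
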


If $H\leq G$ is a quasiconvex subgroup, we denote by $\L H \cu\partial_{\infty}G$ its limit set.  The following is Lemma~2.6 
in \cite{GMRS}:

\begin{lem}\label{intersection of limit sets}
We have $\L(H\cap K)=\L H\cap\L K$ for any two quasiconvex subgroups $H,K\leq G$.
\end{lem}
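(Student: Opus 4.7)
The easy direction $\L(H\cap K)\cu\L H\cap\L K$ is immediate from the containments $H\cap K\leq H,K$. For the reverse inclusion, fix $\xi\in\L H\cap\L K$ and a basepoint $o\in G$ (thinking of $G$ as the vertex set of a Cayley graph). The plan is to produce a sequence in $H\cap K$ converging to $\xi$ by combining quasiconvexity with a pigeonhole argument.

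The first step is to find a geodesic ray $\g$ from $o$ to $\xi$ that stays within some uniform distance $C$ of both $H$ and $K$. Picking sequences $h_n\in H,\,k_n\in K$ with $h_n,k_n\to\xi$, quasiconvexity of $H$ forces each segment $[o,h_n]$ to lie in a fixed neighbourhood of $H$; properness of the Cayley graph then yields a subsequential limit ray $\g\to\xi$ lying in $N_C(H)$. Applying the same argument to $K$ (and passing to a further subsequence if necessary) gives a single ray $\g$ close to both subgroups.

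The second step is the pigeonhole argument. For each integer $t\geq 0$, choose $h_t\in H$ and $k_t\in K$ within distance $C$ of $\g(t)$; then $h_t^{-1}k_t$ has word-length at most $2C$ and so lies in a finite subset of $G$. Hence there exist $g\in G$ and an infinite subsequence $t_n$ along which $h_{t_n}^{-1}k_{t_n}=g$, that is, $k_{t_n}\in Hg\cap K$ for every $n$. An elementary coset calculation shows that if $Hg\cap K$ is nonempty with some element $k_0$, then $Hg\cap K=(H\cap K)\cdot k_0$, since any two of its elements differ on the right by an element lying in both $H$ and $K$. Writing $k_{t_n}=m_n k_0$ with $m_n\in H\cap K$, the $m_n$ stay at fixed distance $|k_0|$ from $k_{t_n}$ in the Cayley graph, so $m_n\to\xi$, establishing $\xi\in\L(H\cap K)$.

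The only step that genuinely uses hyperbolicity, as opposed to the definition of limit set or elementary bookkeeping with cosets, is the construction of the ray $\g$ close to both subgroups; this is where I expect the main technical care to be needed, in extracting subsequential limits of geodesic segments inside bounded neighbourhoods of the quasiconvex sets. Everything after that reduces to pigeonhole and a coset identity.
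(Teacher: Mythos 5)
Your argument is correct and is essentially the standard proof; the paper itself does not prove the lemma but cites it to Gitik--Mitra--Rips--Sageev (Lemma 2.6 of \cite{GMRS}), whose proof follows precisely this outline: fellow-travelling and quasiconvexity give a geodesic ray to $\xi$ in a bounded neighbourhood of both subgroups, then pigeonhole on the bounded-length differences $h_t^{-1}k_t$ and the coset identity $Hg\cap K=(H\cap K)k_0$ finish the job.

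One place you should be a little more careful is the claim that ``applying the same argument to $K$ (and passing to a further subsequence if necessary) gives a single ray $\g$ close to both subgroups.'' Passing to a subsequence of the $k_n$ produces a second limit ray $\g'$, possibly different from the ray $\g$ obtained from the $h_n$, and nothing about subsequences forces $\g'=\g$. What closes the gap is hyperbolicity: any two geodesic rays from $o$ to the same boundary point $\xi$ are at Hausdorff distance at most $2\delta$ (thin ideal triangles / stability of quasi-geodesics), so since $\g\cu N_C(H)$ and $\g'\cu N_C(K)$, one in fact has $\g\cu N_{C+2\delta}(K)$, giving a single ray uniformly close to both $H$ and $K$. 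This is exactly the ``main technical care'' you flagged, but the mechanism is thin triangles rather than a further subsequence extraction. You should also take the basepoint $o$ to be the identity so that $[o,h_n]$ and $[o,k_n]$ have both endpoints in $H$ and $K$ respectively; for a general basepoint one again has to invoke fellow-travelling to discard an initial segment. With these small patches the proof is complete.
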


If $G$ acts properly and cocompactly on a geodesic metric space $X$, there exists a unique $G$--equivariant homeomorphism 
$\phi\colon\partial_{\infty}G\ra\partial_{\infty}X$. Given a 
subset $\Om\cu X$, we denote by 
$\partial_{\infty}\Om\cu\partial_{\infty}X$ its limit set in the visual boundary of $X$ and by 
$\L\Om=\phi^{-1}(\partial_{\infty}\Om)\cu\partial_{\infty}G$ the pull-back to $G$.

Let us now fix a Cayley graph $\G(G)$ of $G$ with respect to a finite generating set. Given a subgroup $H\leq G$, we denote by $N_r(H)$ its closed $r$--neighbourhood in $\G(G)$.

The following is Lemma~7.3 in \cite{Hruska-Wise-finiteness},  although it originally appeared implicitly in 
\cite{Sag97,GMRS}. See also Lemma~7 in \cite{Niblo-Reeves} and Theorem~1.1 in \cite{Hruska-Wise-packing} for additional 
details on its proof.

\begin{lem}\label{intersection of cosets}
Given $D,\kappa\geq 0$ there exists a constant $C$ such that the following holds for all $\kappa$--quasiconvex 
subgroups $H_1,\dots,H_k\leq G$. If 
\[N_D(g_1H_{i_1}),\dots,N_D(g_nH_{i_n})\] 
pairwise intersect, there exists $g\in G$ that is $C$--close to all $g_1H_{i_1},\dots,g_nH_{i_n}$.
\end{lem}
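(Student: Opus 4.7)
The statement is a classical Helly-type result in hyperbolic group theory, and the plan is to reduce it to a uniform Helly property for quasiconvex subsets of a hyperbolic geodesic space. I would work in the chosen Cayley graph $\G(G)$, which is $\delta$-hyperbolic for some constant $\delta=\delta(G)$, and use that each coset $g_jH_{i_j}$ of a $\kappa$-quasiconvex subgroup is itself $\kappa$-quasiconvex in $\G(G)$. Hence every $D$-neighbourhood $N_D(g_jH_{i_j})$ is $(\kappa+D)$-quasiconvex.

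The key input is the following Helly-type property: there exists a constant $C_0=C_0(\delta,\kappa+D)$ such that whenever $Q_1,\dots,Q_n$ are $(\kappa+D)$-quasiconvex subsets of a $\delta$-hyperbolic geodesic space and pairwise intersect, there is a point $x$ with $d(x,Q_j)\leq C_0$ for every $j$. Granting this, applied to the pairwise-intersecting $N_D(g_jH_{i_j})$, I obtain a point $x\in\G(G)$ with $d(x,g_jH_{i_j})\leq C_0+D$ for each $j$. Rounding $x$ to the nearest vertex of $\G(G)$ yields a group element $g\in G$ with $d(g,g_jH_{i_j})\leq C_0+D+1$, so I set $C:=C_0+D+1$.

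To establish the Helly property, I would first treat the case $n=3$ by the standard thin-triangle argument: choose $p_{ij}\in Q_i\cap Q_j$ for $1\leq i<j\leq 3$, and form the geodesic triangle with vertices $p_{12},p_{13},p_{23}$. Since each side has both endpoints in one $Q_i$, quasiconvexity places it in the $(\kappa+D)$-neighbourhood of $Q_i$, and $\delta$-thinness then supplies a point within $\delta+\kappa+D$ of all three sides, hence of all three $Q_i$. The main obstacle is upgrading this to a bound $C_0$ independent of $n$. This is where hyperbolicity is essential: uniformly quasiconvex families in a $\delta$-hyperbolic space behave coarsely like families of subtrees in a tree, where the Helly number is two. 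Concretely, one argues by induction on $n$ that a coarse centroid built from three of the sets automatically lies within a bounded distance, depending only on $\delta$ and $\kappa+D$, of every further set in the family; this in turn rests on the fact that in hyperbolic spaces the projection of one quasiconvex set onto another has uniformly bounded diameter outside the region where they coarsely intersect, a consequence of the exponential divergence of geodesics. With this uniform Helly property in hand, the lemma follows as outlined above, the constant $C$ depending only on $\delta,\kappa$ and $D$ and not on $n$, $k$, or the particular subgroups $H_i$.
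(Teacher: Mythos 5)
The paper does not prove this lemma: it simply records that the statement is Lemma~7.3 of Hruska--Wise, with pointers to Sageev, Gitik--Mitra--Rips--Sageev, and Niblo--Reeves for the argument. So there is no internal proof to compare yours against. Your reduction --- pass to the $\delta$-hyperbolic Cayley graph, observe the sets $N_D(g_jH_{i_j})$ are uniformly $(\kappa+D)$-quasiconvex, and invoke a coarse Helly property for uniformly quasiconvex subsets with a constant $C_0(\delta,\kappa+D)$ independent of $n$ --- is the right framing, and it is essentially what the cited sources do. But the way you propose to obtain the $n$-uniform Helly bound contains a genuine gap.

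You assert that a coarse centroid built from \emph{three} of the sets automatically lies within a uniformly bounded distance of every further set. That is false. In the upper half-plane, let $\gamma$ be the imaginary axis and take $Q_1,Q_2,Q_3$ to be three quasiconvex sets that all fellow-travel $\gamma$, so their coarse triple overlap is the whole of $\gamma$; let $Q_4,Q_5$ be geodesics crossing $\gamma$ at points $x_4,x_5$ that are far apart, and crossing one another at a point $y$. All pairs intersect. Yet the triple $p_{12},p_{13},p_{23}$ can be chosen anywhere on $\gamma$, so the thin-triangle centroid of $Q_1,Q_2,Q_3$ can be a point of $\gamma$ arbitrarily far from both $x_4$ and $x_5$, hence arbitrarily far from $Q_4$ and $Q_5$. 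The actual common coarse point is the centroid of the triangle $x_4,x_5,y$, which is invisible from $Q_1,Q_2,Q_3$ alone. The underlying issue is that when the coarse overlap of the chosen three sets is large the centroid is simply not determined, and the ``bounded projection outside the coarse intersection'' fact you invoke does nothing to pin it down. The alternative you gesture at --- take coarse intersections one set at a time --- also fails to give a uniform constant: the overlap constant increases by an additive $O(\delta+\kappa)$ at each of the $n-2$ steps, so the naive induction yields a bound growing linearly in $n$. Any correct proof must pick the candidate common point using all $n$ sets simultaneously (e.g.\ via a min--max argument over $\max_j d(x,Q_j)$, or via the extremal pairwise-intersection points along a fixed $Q_1$ and a thin-polygon argument); that choice is precisely the content of the cited lemmas, and it is the step your sketch leaves out.
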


We will need the following result in Section~\ref{infinitely-ended sect}.

\begin{lem}\label{connected components in general}
Let $X$ be a locally connected, 
proper, geodesic, $\delta$--hyperbolic space and, for some $R\geq 0$, let $U\cu X$ be a closed $R$--quasiconvex subset. 
For every subset $A\cu X- U$, let us set $\tilde A:=A\sqcup U$. Then:
\begin{enumerate}
\item if $A$ is a union of connected components of $X- U$, the set $\tilde A$ is $R$--quasiconvex;
\item if $A$ and $B$ are unions of connected components of $X- U$ such that $A\cap B=\emptyset$, then $\partial_{\infty}\tilde A\cap\partial_{\infty}\tilde B=\partial_{\infty}U$;
\item $\partial_{\infty}X-\partial_{\infty}U$ is a disjoint union of the open subsets $\partial_{\infty}\tilde C-\partial_{\infty}U$, where $C$ is a connected component of $X- U$.
\end{enumerate}
\end{lem}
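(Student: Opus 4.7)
We argue the three parts in order, each relying on its predecessors.

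\textbf{Part (1).} Fix $x, y \in \tilde A$ and a geodesic $\gamma \colon [0, L] \to X$ between them; for $p = \gamma(t)$, we show $d(p, \tilde A) \leq R$. If $p \in U$ there is nothing to do, so $p$ lies in a component $C'$ of $X - U$. Since $X$ is locally connected and $U$ is closed, $C'$ is clopen in $X - U$, so $\overline{C'} \setminus C' \cu U$. If $C' \cu A$, then $p \in \tilde A$. Otherwise $C' \cap A = \emptyset$, forcing $x, y \notin C'$; the component $I \cu [0, L]$ of $\gamma^{-1}(C')$ containing $t$ then has its endpoints mapped into $U$ by $\gamma$, and $R$--quasiconvexity of $U$ applied to $\gamma|_I$ gives $d(p, U) \leq R$.

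\textbf{Part (2).} The inclusion $\partial_\infty U \cu \partial_\infty \tilde A \cap \partial_\infty \tilde B$ is immediate. For the reverse, let $\xi$ be in the intersection and pick $a_n \in \tilde A$, $b_n \in \tilde B$ converging to $\xi$. A subsequence of either landing in $U$ already gives $\xi \in \partial_\infty U$; otherwise $a_n \in A$, $b_n \in B$, and since $A, B$ are disjoint unions of components, the geodesic $[a_n, b_n]$ must cross $U$. Pick $u_n \in [a_n, b_n] \cap U$ and a basepoint $x_0$. Hyperbolicity gives $d(x_0, u_n) \geq d(x_0, [a_n, b_n]) \geq (a_n \cdot b_n)_{x_0} - O(\delta) \to \infty$. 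A standard thin-triangle argument on the triangle with vertices $x_0, a_n, b_n$ then shows $(u_n \cdot \xi)_{x_0} \to \infty$, so $u_n \to \xi$ and hence $\xi \in \partial_\infty U$.

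\textbf{Part (3).} Given a component $C$, set $B_C = \bigsqcup_{C' \neq C} C'$, so that $X = \tilde C \cup \tilde{B_C}$. By part (1), both $\tilde C$ and $\tilde{B_C}$ are $R$--quasiconvex, so their limit sets are closed in $\partial_\infty X$. Part (2) applied to $(C, B_C)$ yields $\partial_\infty \tilde C \cap \partial_\infty \tilde{B_C} = \partial_\infty U$, so $\partial_\infty \tilde C - \partial_\infty U$ and $\partial_\infty \tilde{B_C} - \partial_\infty U$ form a complementary, hence relatively open, partition of $\partial_\infty X - \partial_\infty U$. Pairwise disjointness of $\partial_\infty \tilde C - \partial_\infty U$ across components also follows from (2). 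For coverage, take $\xi \in \partial_\infty X - \partial_\infty U$ and a ray $\gamma$ from $x_0$ to $\xi$: the Gromov-product estimate of (2) forces $d(\gamma(t), U) \to \infty$, so the tail $\gamma([T, \infty))$ lies in a single component $C$, whence $\xi \in \partial_\infty \tilde C$.

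The most delicate step is openness in (3). A direct argument via fellow-travelling rays is problematic, since a nearby ray may re-enter $U$ after diverging from $\gamma$ and settle in a different component. The indirect route above sidesteps this by reducing openness to closedness of $\partial_\infty \tilde{B_C}$, which is supplied by (1).
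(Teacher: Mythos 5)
Your proof is correct and follows essentially the same route as the paper's, with small cosmetic differences: in part~(1) you argue pointwise via components of $\gamma^{-1}(C')$ rather than splitting the geodesic into an initial piece, a middle piece, and a terminal piece as the paper does, and in part~(2) you work with sequences and Gromov-product estimates where the paper uses quasi-geodesic rays; part~(3), including the indirect route to openness via closedness of $\partial_\infty\tilde{B}_C$, is the same argument as in the paper.
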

\begin{proof}
In order to prove part~(1), consider a geodesic $\g\cu X$ joining two points $x,y\in\tilde A$. Let $x',y'\in\g$ be the points furthest from $x$ and $y$, respectively, such that the sub-segments $xx',yy'\cu\g$ are entirely contained in the closure $\overline A\cu X$. If $x$ or $y$ do not lie in $\overline A$, we set $x'=x$ or $y'=y$. The points $x'$ and $y'$ always lie in $U$, so the sub-segment of $\g$ joining them is contained in the $R$--neighbourhood of $U\cu\tilde A$. Since the rest of $\g$ is contained in $\overline A$, the entire $\g$ lies inside the $R$--neighbourhood of $\tilde A$, showing part~(1).

Given $A,B$ as in part~(2) and a point $\xi\in\partial_{\infty}\tilde A\cap\partial_{\infty}\tilde B$, we consider quasi-geodesic rays $r_A\cu\tilde A$ and $r_B\cu\tilde B$ representing $\xi$. Let $x_n\in r_A$ and $y_n\in r_B$ be diverging sequences with $\sup d(x_n,y_n)<+\infty$. Observing that any geodesic joining $x_n$ to $y_n$ must intersect $U$, we deduce that $r_A$ and $r_B$ stay at bounded distance from $U$. Hence $\xi\in\partial_{\infty}U$, which proves part~(2).

Now, let $\mscr{C}$ be the set of all connected components $C\cu X- U$. Given a point $\xi\in\partial_{\infty}X-\partial_{\infty}U$ and a ray $r\cu X$ representing it, a sub-ray $r'\cu r$ must be disjoint from $U$. It follows that $r'$ is contained in some $C\in\mscr{C}$, hence $\xi\in\partial_{\infty}\tilde C-\partial_{\infty}U$. This shows that $\partial_{\infty}X$ is the union of the sets $\partial_{\infty}\tilde C-\partial_{\infty}U$ with $C\in\mscr{C}$; by part~(2), this is a disjoint union. Finally, observe that, for each $C\in\mscr{C}$, the boundary $\partial_{\infty}X$ is union of the two closed subsets $\partial_{\infty}\tilde C$ and $\partial_{\infty}(X- C)$. Again by part~(2), this shows that $\partial_{\infty}\tilde C-\partial_{\infty}U$ has closed complement in $\partial_{\infty}X-\partial_{\infty}U$ and is therefore open.
\end{proof}

The following is the key notion in this subsection. It allows us to cubulate hyperbolic groups with as few non-canonical 
choices as possible.

\begin{defn}\label{abstract hyperplanes defn}
An \emph{abstract hyperplane} for a hyperbolic group $G$ is a pair $(H,\mf{H})$, where $H\leq G$ is quasiconvex and $\mf{H}$ is an $H$--invariant 
partition $\partial_{\infty}G-\L H=\mf{H}^+\sqcup\mf{H}^-$, where $\mf{H}^{\pm}$ are nonempty open subsets. 
\end{defn}

We say that $\mf{H}^{\pm}$ are the \emph{sides} of $\mf{H}$ and that $\mf{H}$ is \emph{subordinate} to $H$.  Two points 
$\xi,\eta\in\partial_{\infty}G$ are \emph{separated} by $\mf{H}$ if they lie on opposite sides. 

Observe that, by $H$--invariance, the 
closures $\overline{\mf{H}^+},\overline{\mf{H}^-}\cu\partial_{\infty}G$ are exactly $\mf{H}^+\sqcup\L H$ and 
$\mf{H}^-\sqcup\L H$.

\begin{lem}\label{abstract hyperplanes vs almost invariant sets}
Given any abstract hyperplane $(H,\mf{H})$, there exist two $H$--invariant open subsets $H^{\pm}\cu\G(G)$ and a constant $D>0$ such that:
\begin{enumerate}
\item $\G(G)- N_D(H)=H^+\sqcup H^-$;
\item $H^+\sqcup N_D(H)$, $H^-\sqcup N_D(H)$ and $N_D(H)$ are connected;
\item $\L H^+=\overline{\mf{H}^+}$ and $\L H^-=\overline{\mf{H}^-}$.
\end{enumerate}
\end{lem}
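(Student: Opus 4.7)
The plan is to define $H^\pm$ as unions of connected components of the complement of a sufficiently thick neighbourhood of $H$, where the labelling $\pm$ is dictated by which side of $\mf{H}$ the limit set of the component falls on. Since $H$ is quasiconvex in the hyperbolic group $G$, it is finitely generated, and thus $N_D(H)$ is connected for every $D$ beyond some threshold $D_0$, and is quasiconvex with constant depending on $D$. Applying Lemma~\ref{connected components in general} with $U=N_D(H)$, the connected components $\{C\}$ of $\Gamma(G)-N_D(H)$ yield closed quasiconvex subsets $\tilde C=C\cup N_D(H)$ whose limit sets furnish a disjoint open cover
\[\partial_\infty G-\Lambda H=\bigsqcup_C\bigl(\partial_\infty\tilde C-\Lambda H\bigr).\]

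The crux is to show that $D$ may be chosen large enough that, for every component $C$, the open set $\partial_\infty\tilde C-\Lambda H$ is entirely contained in $\mf{H}^+$ or in $\mf{H}^-$. Since $\mf{H}^\pm$ are open, disjoint, and cover $\partial_\infty G-\Lambda H$, this amounts to showing that the partition arising from the components of $\Gamma(G)-N_D(H)$ eventually refines $\{\mf{H}^+,\mf{H}^-\}$. For each $\xi\in\partial_\infty G-\Lambda H$, let $C_D(\xi)$ denote the unique component of $\Gamma(G)-N_D(H)$ containing the tail of any geodesic ray from $H$ to $\xi$ (well-defined for $D$ large, as any two such rays fellow-travel by hyperbolicity). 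One argues that the nested sets $\partial_\infty\tilde C_D(\xi)-\Lambda H$ shrink towards $\{\xi\}$ in the visual topology as $D\to\infty$. A compactness argument --- covering compact subsets of $\partial_\infty G-\Lambda H$ bounded away from $\Lambda H$ by finitely many open sets, each lying entirely in one of $\mf{H}^\pm$, and handling neighbourhoods of $\Lambda H$ using quasiconvexity of $N_D(H)$ --- then furnishes a uniform choice of $D$.

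Having fixed such a $D$, set
\[H^+:=\bigcup_{\partial_\infty\tilde C-\Lambda H\subseteq\mf{H}^+}C,\qquad H^-:=\bigcup_{\partial_\infty\tilde C-\Lambda H\subseteq\mf{H}^-}C.\]
Both are open in $\Gamma(G)$, their disjoint union is $\Gamma(G)-N_D(H)$, and they are $H$-invariant because the $H$-action permutes the components of $\Gamma(G)-N_D(H)$ in a manner preserving the side of $\mf{H}$ containing each limit set; this yields property~(1). For property~(2), $N_D(H)$ is connected by choice of $D\geq D_0$, each $C\cup N_D(H)$ is connected because $C$ abuts $N_D(H)$, and all such unions share $N_D(H)$, so $H^\pm\cup N_D(H)$ is connected. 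For property~(3), every $C\subseteq H^\pm$ satisfies $\partial_\infty\tilde C\subseteq\overline{\mf{H}^\pm}$, yielding $\Lambda H^\pm\subseteq\overline{\mf{H}^\pm}$; conversely, every $\xi\in\mf{H}^\pm$ lies in some $\partial_\infty\tilde C$ with $C\subseteq H^\pm$, and $\Lambda H\subseteq\Lambda H^\pm$ follows from $H$-invariance of $H^\pm$. The main obstacle is the uniform selection of $D$ described above, in particular handling points of $\mf{H}^\pm$ approaching $\Lambda H$, where the components of $\Gamma(G)-N_D(H)$ may remain ``mixed'' for larger $D$ than one might naively expect.
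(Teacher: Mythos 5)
Your construction is genuinely different from the paper's, and it contains a gap at precisely the point you flag as "the main obstacle."

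You build $H^\pm$ \emph{bottom-up}: partition the components of $\G(G)-N_D(H)$ according to the side of $\mf{H}$ on which each boundary falls, after first arranging (via a choice of $D$) that every component's boundary lies entirely on one side. The paper builds $H^\pm$ \emph{top-down}: it takes $A_L^\pm$ to be the $L$--neighbourhood of the weak hull of $\overline{\mf{H}^\pm}$, checks (by standard $\delta$--thin-triangle arguments) that for $L$ large there is $D$ with $\L A_L^\pm=\overline{\mf{H}^\pm}$, $\G(G)=A_L^+\cup A_L^-$, $A_L^+\cap A_L^-\cu N_D(H)$ and the relevant connectivity, and then simply sets $H^\pm=A_L^\pm-N_D(H)$. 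The desired properties follow in two lines, and the fact that each component of $\G(G)-N_D(H)$ lands entirely in one of $H^\pm$ falls out automatically (any open partition of $\G(G)-N_D(H)$ is a union of components) rather than having to be established in advance.

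The gap in your version is the "uniform choice of $D$." The step you outline --- that $\partial_\infty\tilde C_D(\xi)-\L H$ \emph{shrinks towards} $\{\xi\}$ as $D\to\infty$ --- is simply false in general: when $G$ is a surface group and $H$ a quasi-Fuchsian cyclic subgroup, $\G(G)-N_D(H)$ has exactly two components for every $D$, and their boundaries are the two fixed open arcs of $\partial_\infty G-\L H$; nothing shrinks. (There the refinement holds trivially, but this shows your proposed mechanism is not the right one.) The subsequent "compactness argument" is only sketched, and as you yourself note, compactness of subsets of $\partial_\infty G-\L H$ bounded away from $\L H$ says nothing about components accumulating on $\L H$, which is exactly where inconsistency could persist. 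You gesture at "quasiconvexity of $N_D(H)$" to close this but give no argument. One can recover your claim --- the components really do end up with consistent boundaries for $D$ large --- but the cleanest route is essentially the paper's hull construction, after which the consistent-components statement is a corollary rather than an input. As written, then, the crux of your proof is unestablished, and the argument does not go through.
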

\begin{proof}
Given $L>0$, we denote by $A_L^+\cu\G(G)$ the closed $L$--neighbourhood of the weak hull of $\overline{\mf{H}^+}=\mf{H}^+\sqcup\L 
H$.  (Recall that the weak hull of $\overline{\mf{H}^+}$ is the union of all $\Gamma(G)$--geodesics joining 
distinct points in $\overline{\mf{H}^+}$.)

The set $A_L^-$ is defined similarly. Observe that, for every sufficiently large value of $L$, there exists 
$D>0$ such that:
\begin{itemize}
\item $\L A_L^{\pm}=\overline{\mf{H}^{\pm}}$ 
and $\G(G)=A_L^+\cup A_L^-$; 
\item $A_L^+\cap A_L^-\cu N_D(H)$; 
\item the sets $A_L^+\cup N_D(H)$, $A_L^-\cup N_D(H)$ and $N_D(H)$ are connected. 
\end{itemize}
Thus, the sets $H^+=A_L^+- N_D(H)$ and $H^-=A_L^-- N_D(H)$ are open, 
$H$--invariant, and satisfy~(1) and~(2).  It is clear from the construction that $\mf{H}^+\cu\L H^+\cu\L A_L^+=\mf{H}^+\sqcup\L H$. 
Since $H^+$ is nonempty and $H$--invariant, we also have $\L H\cu\L H^+$, which shows (3).
\end{proof}

\begin{rmk}
Lemma~\ref{abstract hyperplanes vs almost invariant sets} shows that, if there exists an abstract hyperplane subordinate to 
$H$, then $H$ must be a codimension-one subgroup of $G$. Conversely, it is not hard to see that, for every quasiconvex 
codimension-one subgroup $H\leq G$, there exist abstract hyperplanes subordinate to $H$.
\end{rmk}

\begin{rmk}\label{comparing transversality}
Let $(H,\mf{H})$, $(K,\mf{K})$ be abstract hyperplanes and let $H^{\pm}, K^{\pm}$ be the sets constructed in Lemma~\ref{abstract hyperplanes vs almost invariant sets}. The constant $D$ can always be enlarged, so, without loss of generality, it is the same for both. If $N_D(H)$ and $N_D(K)$ are disjoint, then a side of $\mf{H}$ is disjoint from a side of $\mf{K}$.

Indeed, since $N_D(H)$ is connected, we have either $N_D(H)\cu K^+$ or $N_D(H)\cu K^-$; without loss of generality, let us assume that the former holds. Similarly, we have $N_D(K)\cu H^-$ without loss of generality. It follows that the connected set $H^+\sqcup N_D(H)$ is disjoint from $N_D(K)$ and thus contained in a single connected component of its complement. Since $N_D(H)\cu K^+$, we have $H^+\cu K^+$ and $\L H^+\cu \L K^+$. Hence $\mf{H}^+\cap\mf{K}^-=\emptyset$.
\end{rmk}

The following is little more than a rephrasing in terms of $\partial_{\infty}G$ of well-known results from \cite{Sag97,Bergeron-Wise}.

\begin{prop}\label{cubulating with abstract hyperplanes}
Let $\mc{H}$ be a $G$--invariant set of abstract hyperplanes.
\begin{enumerate}
\item If $\mc{H}$ contains only finitely many $G$--orbits, $\mc{H}$ gives rise to a cocompact $G$--action on an essential $\CAT$ cube complex $X(\mc{H})$.
\item In this case, the action $G\acts X(\mc{H})$ is proper if and only if $g^+,g^-\in\partial_{\infty}G$ are separated by an element of $\mc{H}$, for every infinite-order $g\in G$.
\end{enumerate}
\end{prop}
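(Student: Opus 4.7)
The plan is to encode $\mc{H}$ as a pocset and apply Lemma~\ref{pocset lemma}, then deduce essentiality and the properness criterion from the boundary dynamics of $G$.

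For part~(1), I would take the pocset $\mc{P}$ to have as elements the sides $g\mf{H}^{\pm}$ (for $g\in G$, $(H,\mf{H})\in\mc{H}$, $\epsilon\in\{+,-\}$), with $*$ swapping the two sides and the partial order defined by almost-containment of the $G$-almost invariant subsets $gH^{\pm}\cu\G(G)$ supplied by Lemma~\ref{abstract hyperplanes vs almost invariant sets}; equivalently, by boundary containment $\overline{g\mf{H}^{\epsilon}}\cu g'\mf{H}'^{\epsilon'}$ in $\partial_{\infty}G$. Antisymmetry and the order-reversing property of $*$ follow from the description $\overline{\mf{H}^{\pm}}=\mf{H}^{\pm}\cup\L H$ together with Remark~\ref{comparing transversality}. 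Finite-dimensionality of $\mc{P}$ amounts to bounding the size of pairwise-transverse families of abstract hyperplanes: Remark~\ref{comparing transversality} converts transversality of sides into pairwise intersection of the neighborhoods $N_D(H)$, and Lemma~\ref{intersection of cosets}, combined with the hypothesis of finitely many $G$-orbits in $\mc{H}$, yields the uniform bound. A fixed vertex $x_0\in\G(G)$ determines a principal DCC ultrafilter (pick the side whose Cayley-graph half $H^{\epsilon}\cu\G(G)$ contains $x_0$), with DCC holding by the same quasiconvexity-based argument. Parts~(1) and~(4) of Lemma~\ref{pocset lemma} then produce the cocompact action $G\acts X(\mc{H})$.

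For essentiality, a hyperplane of $X(\mc{H})$ corresponds to some $(H,\mf{H})\in\mc{H}$, and $\mf{H}^+$ is a nonempty open subset of $\partial_{\infty}G$. By Lemma~\ref{density} there is an infinite-order $g\in G$ with $g^+,g^-\in\mf{H}^+$, and since powers of $g$ attract $\partial_{\infty}G-\{g^-\}$ uniformly towards $g^+$, we obtain $\overline{g^n\mf{H}^+}\subsetneq\mf{H}^+$ for large $n$, so $g^n$ skewers the halfspace associated with $\mf{H}^+$.

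For part~(2), the forward direction is routine: any infinite-order $g\in G$ has a power acting as a combinatorial hyperbolic isometry on $X(\mc{H})$, and any hyperplane crossing its axis yields an abstract hyperplane in $\mc{H}$ separating $g^+$ from $g^-$. The converse is the heart of the matter. By Lemma~\ref{density}, pairs $(g^+,g^-)$ are dense in $\partial_{\infty}G\x\partial_{\infty}G$; combined with the openness of the sides $\mf{H}^{\pm}$ and the $G$-invariance of $\mc{H}$, the hypothesis upgrades to the statement that every pair of distinct points of $\partial_{\infty}G$ is separated by some element of $\mc{H}$. The standard Bergeron--Wise compactness argument~\cite{Bergeron-Wise}, applied to the finite collection of $G$-orbits in $\mc{H}$, then yields properness of $G\acts X(\mc{H})$. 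The main subtlety throughout is the translation between boundary separation in $\partial_{\infty}G$ and pocset/cubical separation, which relies crucially on the openness of $\mf{H}^{\pm}$ in $\partial_{\infty}G-\L H$ and on finite-dimensionality of $\mc{P}$.
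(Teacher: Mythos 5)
Your overall strategy matches the paper's: encode $\mc{H}$ as a pocset ordered by boundary containment of closures, bound transversality via Remark~\ref{comparing transversality} and Lemma~\ref{intersection of cosets}, invoke Lemma~\ref{pocset lemma} for cocompactness, use Lemma~\ref{density} for essentiality, and invoke Bergeron--Wise for part~(2). However, your essentiality argument contains a genuine error.

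You write that by Lemma~\ref{density} one can find an infinite-order $g$ with $g^+,g^-\in\mf{H}^+$ (both on the same side), and then assert $\overline{g^n\mf{H}^+}\subsetneq\mf{H}^+$ for large $n$. This fails. If $g^-\in\mf{H}^+$, then $\overline{\mf{H}^-}=\mf{H}^-\sqcup\L H$ is a compact set disjoint from \emph{both} fixed points of $g$, so $g^n\overline{\mf{H}^-}$ shrinks to a neighbourhood of $g^+$, which is \emph{disjoint} from $\overline{\mf{H}^-}$ for $n$ large. Taking complements, $g^n\mf{H}^+\supseteq\mf{H}^-$ eventually, so $g^n\mf{H}^+\not\subseteq\mf{H}^+$; the halfspaces $\overline{\mf{H}^+}$ and $g^n\overline{\mf{H}^+}$ are not nested and $g$ does not skewer. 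What is needed (and what the paper uses) is $g$ with $g^+\in\mf{H}^+$ and $g^-\in\mf{H}^-$, i.e.\ the fixed points on \emph{opposite} sides of $\mf{H}$; then $\overline{\mf{H}^+}$ is compact and disjoint from $g^-$, so by uniform convergence $g^n\overline{\mf{H}^+}\subseteq\mf{H}^+\subsetneq\overline{\mf{H}^+}$ for large $n$, which is skewering. Lemma~\ref{density} yields such a $g$ since $\mf{H}^+\times\mf{H}^-$ is a nonempty open subset of $\partial_\infty G\times\partial_\infty G$.

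A smaller point: in part~(2) you propose to upgrade the hypothesis to ``every pair of distinct boundary points is separated by some element of $\mc{H}$'' and then run a Bergeron--Wise compactness argument. This upgrade is not needed and does not obviously follow from the stated hypothesis alone (the sides $\mf{H}^\pm$ are merely open, and which $\mf H$ separates $(g^+,g^-)$ varies with $g$, so density plus finitely many orbits does not immediately give the uniform control you implicitly need). Proposition~1.3 of~\cite{Bergeron-Wise} is formulated precisely in terms of separating $g^+$ from $g^-$ for each infinite-order $g$, so the cleaner move --- and what the paper does --- is to apply it directly.

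Also, be careful with the order on your pocset: you define it via $\overline{g\mf{H}^\epsilon}\subseteq g'\mf{H}'^{\epsilon'}$ (closure inside an \emph{open} set). This relation is not reflexive, so it is not a partial order. The paper orders the \emph{closures} $\overline{\mf{H}^\epsilon}$ by honest inclusion; the correct boundary translation of your almost-containment condition is $\overline{g\mf{H}^\epsilon}\subseteq\overline{g'\mf{H}'^{\epsilon'}}$.
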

\begin{proof}
The collection $\mc{P}=\{\overline{\mf{H}^{\eps}} \mid \mf{H}\in\mc{H},\ \eps\in\{\pm\}\}$ has a natural structure of poset coming from inclusions. We promote this to a structure of pocset by setting $(\overline{\mf{H}^+})^*=\overline{\mf{H}^-}$. 

Observe that $\overline{\mf{H}^+}\cu\overline{\mf{K}^+}$ if and only if $\mf{H}^+\cap\mf{K}^-=\emptyset$. Thus, $\overline{\mf{H}^+}$ and $\overline{\mf{K}^+}$ are transverse if and only if both $\mf{H}^+$ and $\mf{H}^-$ intersect both $\mf{K}^+$ and $\mf{K}^-$.

Since there are finitely many $G$--orbits in $\mc{H}$, Lemma~\ref{abstract hyperplanes vs almost invariant sets} provides a 
constant $D$ that works for every element of $\mc{H}$. By Remark~\ref{comparing transversality}, every set of $k$ 
pairwise-transverse elements of $\mc{P}$ corresponds to a collection of $k$ cosets of uniformly quasiconvex subgroups of $G$ whose 
$D$--neighbourhoods pairwise intersect. Lemma~\ref{intersection of cosets} shows that $\mc{P}$ is finite-dimensional and 
contains only finitely many $G$--orbits of maximal pairwise-transverse subsets. Lemma~\ref{pocset lemma} thus yields a 
natural cocompact action on a $\CAT$ cube complex $X(\mc{H})$. 

For every $\mf{H}\in\mc{H}$, Lemma~\ref{density} shows that there exists an infinite-order element $g\in G$ with $g^+\in\mf{H}^+$ and $g^-\in\mf{H}^-$. A power of $g$ must then skewer the hyperplane of $X(\mc{H})$ determined by $\mf{H}$. We conclude that $X(\mc{H})$ is essential. Finally, part~(2) follows from Proposition~1.3 in \cite{Bergeron-Wise}.
\end{proof}

\begin{rmk}\label{automatic hyperplane-essentiality}
Let $H_1,\dots,H_k$ be quasiconvex subgroups of $G$ with the property that, for each $i\leq k$, the difference 
$\partial_{\infty}G-\L H_i$ has exactly two connected components, and these are left invariant by the 
$H_i$--action. Each $H_i$ determines a unique abstract hyperplane $\mf{H}_i$ and we can consider the collection 
$\mc{H}=G\cdot\mf{H}_1\cup\dots \cup G\cdot\mf{H}_k$. In this case, the cube complex $X(\mc{H})$ provided by 
part~(1) of Proposition~\ref{cubulating with abstract hyperplanes} is automatically hyperplane-essential.

In order to see this, consider abstract hyperplanes $\mf{H},\mf{K}\in\mc{H}$, with stabilisers $H,K$ respectively. If $\L H\cap\mf{K}^+=\emptyset$, the connected set $\mf{K}^+$ is partitioned into the two open sets $\mf{K}^+\cap\mf{H}^{\pm}$. It follows that one of these two sets is empty and, in particular, $\mf{H}$ and $\mf{K}$ are not transverse.

Thus, if $\mf{H}$ and $\mf{K}$ \emph{are} transverse, the four intersections $\L H\cap\mf{K}^{\pm}$ and $\L K\cap\mf{H}^{\pm}$ must all be nonempty and open in the respective limit sets. By Lemma~\ref{density}, there exists an infinite-order element $h\in H$ with $h^+\in\mf{K}^+$ and $h^-\in\mf{K}^-$; in particular, a sufficiently large power of $h$ skewers the hyperplane of $X(\mc{H})$ determined by $\mf{K}$. Similarly, there exists $k\in K$ skewering the hyperplane determined by $\mf{H}$. This shows that the action $G\acts X(\mc{H})$ is hyperplane-essential.
\end{rmk}

\subsection{Shaving cocompact cubulations.}

Let $X$ be a $\CAT$ cube complex.

\begin{defn}\label{defn:SP}
Two distinct hyperplanes of $X$ are said to be \emph{effectively parallel} if they are disjoint and bound 
halfspaces at finite Hausdorff 
distance.
\end{defn}

As an example, the cubical subdivision $X'$ contains a pair of effectively parallel hyperplanes for every 
hyperplane of $X$. 

Note that, in general, two disjoint hyperplanes can be at finite Hausdorff distance without being effectively 
parallel.  For 
instance, in a tree with all vertices of degree $\geq 3$, any two distinct hyperplanes are at finite Hausdorff distance, but 
no two of them are effectively parallel.

We also observe that, if $X$ is hyperplane essential and $\mf{h}_1,\mf{h}_2$ are distinct halfspaces at finite Hausdorff distance,
then the hyperplanes $\mf{w}_1,\mf{w}_2$ bounding them are necessarily disjoint. Thus, $\mf{w}_1$ and $\mf{w}_2$ are 
effectively parallel.

Given a subset $A\cu\mscr{H}(X)$, we employ the notation $A^*=\{\mf{h}^*\mid\mf{h}\in A\}$.

\begin{lem}\label{structure of SP hyp}
Let $X$ be locally finite, cocompact, essential and hyperplane-essential.  Let $\mscr{P}\cu\mscr{H}(X)$ be a maximal set of 
halfspaces pairwise at finite Hausdorff distance. Then:
\begin{enumerate}
\item the subset $\mscr{P}\cu\mscr{H}(X)$ is totally ordered by inclusion;
\item $\mscr{P}^*$ is also a maximal set of halfspaces at finite Hausdorff distance;
\item if $\mf{w}\in\mscr{W}(X)$ doesn't bound an element of $\mscr{P}$, then $\mf{w}$ is either transverse to all, contained in all, or disjoint from all elements of $\mscr{P}$.
\end{enumerate}
\end{lem}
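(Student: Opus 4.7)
The argument rests on the remark preceding the lemma: in a hyperplane-essential complex, any two distinct halfspaces at finite Hausdorff distance bound disjoint hyperplanes. In particular, for any $\mf{h}_1\neq\mf{h}_2$ in $\mscr{P}$, the bounding hyperplanes $\mf{w}_1,\mf{w}_2$ are disjoint, leaving four possible halfspace configurations. The two ``opposite'' ones ($\mf{h}_1\cap\mf{h}_2=\emptyset$ and $\mf{h}_1\cup\mf{h}_2=X$) I would eliminate using essentiality of $X$: for instance, if $\mf{h}_1\cu\mf{h}_2^*$, essentiality supplies points of $\mf{h}_1$ arbitrarily far from $\mf{w}_1$, and these are separated from $\mf{h}_2$ by $\mf{w}_2$, so their distance to $\mf{h}_2$ grows without bound, contradicting finite Hausdorff distance. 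The two remaining nestings give part~(1).

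For part~(2), given $\mf{h}_1\subsetneq\mf{h}_2$ in $\mscr{P}$, the common strip $S=\mf{h}_2\cap\mf{h}_1^*$ simultaneously witnesses both Hausdorff distances: the one between $\mf{h}_1$ and $\mf{h}_2$ equals $\sup_{x\in S}d(x,\mf{w}_1)$, while that between $\mf{h}_1^*$ and $\mf{h}_2^*$ equals $\sup_{x\in S}d(x,\mf{w}_2)$. Since $\mf{w}_1$ and $\mf{w}_2$ are themselves at finite Hausdorff distance, the two suprema are simultaneously finite. Hence $\mscr{P}^*$ consists of halfspaces pairwise at finite Hausdorff distance, and its maximality follows from that of $\mscr{P}$ by complementation.

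For part~(3), fix a hyperplane $\mf{w}$ that does not bound any element of $\mscr{P}$, and split into cases depending on whether $\mf{w}$ is transverse to some $\mf{w}_a$ with $\mf{h}_a\in\mscr{P}$. In the transverse case, for any other $\mf{h}_b\in\mscr{P}$ the bridge decomposition $B(\mf{w}_a,\mf{w}_b)\cong\pi_{\mf{w}_a}(\mf{w}_b)\times I$ yields, for every $y\in\mf{w}_a$, the identity
\[
d_X(y,\mf{w}_b) \;=\; d_{\mf{w}_a}\bigl(y,\pi_{\mf{w}_a}(\mf{w}_b)\bigr) + d(\mf{w}_a,\mf{w}_b).
\]
The left-hand side is uniformly bounded in $y$ by the finite Hausdorff distance between $\mf{w}_a$ and $\mf{w}_b$. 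Since the cube complex $\mf{w}_a$ is essential (by hyperplane-essentiality of $X$), this forces $\pi_{\mf{w}_a}(\mf{w}_b)=\mf{w}_a$: otherwise essentiality would supply a halfspace of $\mf{w}_a$ disjoint from $\pi_{\mf{w}_a}(\mf{w}_b)$ and containing points at unbounded distance from it. As the hyperplanes of $\mf{w}_a$ crossing $\pi_{\mf{w}_a}(\mf{w}_b)$ correspond to the $X$--hyperplanes transverse to both $\mf{w}_a$ and $\mf{w}_b$, every hyperplane transverse to $\mf{w}_a$ is also transverse to $\mf{w}_b$; in particular $\mf{w}$ is.

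Otherwise $\mf{w}$ is disjoint from every $\mf{w}_a$ with $\mf{h}_a\in\mscr{P}$, so for each such $a$ either $\mf{w}\cu\mf{h}_a$ or $\mf{w}\cu\mf{h}_a^*$. Suppose both occur, say $\mf{w}\cu\mf{h}_a$ and $\mf{w}\cu\mf{h}_b^*$; the total order from~(1) forces $\mf{h}_b\subsetneq\mf{h}_a$ (the opposite inclusion would give $\mf{w}\cu\mf{h}_b\cap\mf{h}_b^*=\emptyset$). Then $\mf{w}$ sits inside the strip $\mf{h}_a\cap\mf{h}_b^*$, which lies in a finite neighbourhood of $\mf{w}_a$. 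The halfspace $\mf{k}$ of $\mf{w}$ containing $\mf{h}_a^*$ thus differs from $\mf{h}_a^*$ only by a set in that neighbourhood, so $\mf{k}$ and $\mf{h}_a^*$ are at finite Hausdorff distance. By the maximality of $\mscr{P}^*$ from~(2), $\mf{k}\in\mscr{P}^*$, i.e.\ $\mf{k}^*\in\mscr{P}$, which makes $\mf{w}$ bound an element of $\mscr{P}$, a contradiction. The delicate step I expect is the transverse case of~(3), where one must use hyperplane-essentiality (and not just essentiality of $X$) to force the projection identity $\pi_{\mf{w}_a}(\mf{w}_b)=\mf{w}_a$.
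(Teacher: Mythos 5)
Your parts (1) and (2) are fine and in the same spirit as the paper's, and your transverse case of (3) is correct but takes a genuinely different route. The paper observes that if $\mf{w}$ is transverse to some but not all elements of $\mscr{P}$ then everything sits in one de Rham factor and Proposition~\ref{corner} produces a facing triple with two hyperplanes bounding elements of $\mscr{P}$, a contradiction; you instead argue directly from $d_H(\mf{w}_a,\mf{w}_b)<\infty$ and the bridge identity that $\pi_{\mf{w}_a}(\mf{w}_b)=\mf{w}_a$, so every hyperplane crossing $\mf{w}_a$ crosses $\mf{w}_b$. Your argument is more elementary (it avoids the de Rham decomposition and Proposition~\ref{corner}) and proves the stronger statement that transversality to one element implies transversality to all, rather than deriving a contradiction; it is a nice alternative.

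There is, however, a gap in your non-transverse case. After fixing $\mf{h}_a,\mf{h}_b\in\mscr{P}$ with $\mf{w}\cu\mf{h}_a$, $\mf{w}\cu\mf{h}_b^*$, $\mf{h}_b\subsetneq\mf{h}_a$, you let $\mf{k}$ be the halfspace of $\mf{w}$ containing $\mf{h}_a^*$ and assert that $\mf{k}\triangle\mf{h}_a^*=\mf{k}\cap\mf{h}_a$ lies in the bounded strip. This only holds when $\mf{h}_b\cu\mf{k}^*$, i.e.\ when $\mf{w}$ actually \emph{separates} $\mf{w}_a$ from $\mf{w}_b$. But $\mf{w},\mf{w}_a,\mf{w}_b$ are a priori only pairwise disjoint, so they might form a facing triple, i.e.\ $\mf{h}_b\cu\mf{k}$; in that case $\mf{k}\cap\mf{h}_a\supseteq\mf{h}_b$ is unbounded and the claim about the symmetric difference is simply false. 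The facing-triple configuration must be ruled out separately: there one has $\mf{k}^*\cu\mf{h}_a\cap\mf{h}_b^*$, which lies in a bounded neighbourhood of $\mf{h}_b\cu\mf{k}$, so every point of $\mf{k}^*$ is boundedly close to $\mf{w}$ — contradicting essentiality of $X$. (This is precisely the step the paper summarises as ``by essentiality of $X$, no $\mf{w}$ can form a facing triple with $\mf{u}$ and $\mf{v}$.'') Once this case is excluded, your maximality argument for the separating case goes through.
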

\begin{proof}
Since any two elements of $\mscr{P}$ are at finite Hausdorff distance and $X$ is hyperplane-essential, no two elements of 
$\mscr{P}$ are transverse.  Since $X$ is essential, no two elements of $\mscr{P}$ can be disjoint or have disjoint 
complements. This shows part~(1), while part~(2) is clear. We now prove part~(3).

Let $\mf{u},\mf{v}$ be hyperplanes bounding elements of $\mscr{P}$.  
By essentiality of $X$, no $\mf{w}\in\mscr{W}(X)$ can 
form a facing triple with $\mf{u}$ and $\mf{v}$. Since $\mscr{P}$ is maximal, if $\mf{w}$ does not bound an element of $\mscr{P}$, 
then $\mf{w}$ cannot separate $\mf{u}$ and $\mf{v}$ 
either. Thus, if $\mf{w}$ is not transverse to any element of $\mscr{P}$, then $\mf{w}$ must be either contained in all 
elements of $\mscr{P}$, or contained in all of their complements. 

Finally, suppose that $\mf{w}$ is transverse to an element 
of $\mscr{P}$, but not to all of them. In this case, $\mf{w}$ and the elements of $\mscr{P}$ all originate from a single de 
Rham factor of $X$ and Proposition~\ref{corner} provides a hyperplane forming a facing triple with two hyperplanes bounding 
elements of $\mscr{P}$. This is a contradiction and it concludes the proof of part~(3).
\end{proof}

Given $X$ as in Lemma~\ref{structure of SP hyp} and $\mf{h}\in\mscr{H}(X)$, we denote by $\mscr{P}(\mf{h})\cu\mscr{H}(X)$ the 
subset  of all halfspaces at finite Hausdorff distance from $\mf{h}$. We define:
\[\parclass(X)=\{\mscr{P}(\mf{h})\mid\mf{h}\in\mscr{H}(X)\}.\] 
Given distinct elements $\mscr{Q}_1,\mscr{Q}_2\in\parclass(X)$, Lemma~\ref{structure of SP hyp} shows that whether 
$\mf{h}_1\in\mscr{Q}_1$ is contained in $\mf{h}_2\in\mscr{Q}_2$ is independent of the choice of $\mf{h}_1$ and $\mf{h}_2$. 
When this happens, we write $\mscr{Q}_1\preceq\mscr{Q}_2$. We obtain a pocset $(\parclass(X),\preceq,\ast)$ and a 
surjective pocset homomorphism $\mscr{P}\colon\mscr{H}(X)\ra\parclass(X)$.

The dimension of the pocset $\parclass(X)$ coincides with $\dim X<+\infty$. Lem\-ma~\ref{pocset lemma} thus guarantees the 
existence of a unique class of DCC ultrafilters on $\parclass(X)$, which gives rise to a $\CAT$ cube complex 
$\cmp(X)$. We refer to $\cmp(X)$ as the \emph{compression} of $X$. Observe that $\parclass(X)$ and $\cmp(X)$ are naturally equipped with an $\Aut(X)$--action. 

The preimage under $\mscr{P}$ of any ultrafilter on $\parclass(X)$ is an ultrafilter on $\mscr{H}(X)$. 
Assuming for a moment that all fibres of $\mscr{P}$ are finite, preimages of DCC ultrafilters are again DCC. 
In this case, we obtain an $\Aut(X)$--equivariant injection 
$\iota\colon\cmp(X)^{(0)}\hookrightarrow X^{(0)}$, which does not shrink distances.

As the next result shows, the condition on the fibres of $\mscr{P}$ corresponds to $X$ having no $\R$--factors in its de Rham decomposition.

\begin{lem}\label{removing SP hyp}
Let $X$ be locally finite, cocompact, essential and hyperplane-essential. Assume that $X$ has no factors isomorphic to $\R$ in its de Rham decomposition. Then:
\begin{enumerate}
\item the fibres of the map $\mscr{P}$ are uniformly finite and $\iota$ is bi-Lipschitz;
\item $g\in\Aut(X)$ skewers $\mf{h}\in\mscr{H}(X)$ if and only if it skewers the halfspace of $\cmp(X)$ determined by 
$\mscr{P}(\mf{h})$;
\item $\cmp(X)$ is locally finite, cocompact, essential, hyperplane-essential and it has no halfspaces at finite Hausdorff distance.
\end{enumerate}
\end{lem}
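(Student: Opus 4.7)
I would tackle the three conclusions in order, with the main technical work in part~(1). The first move is a reduction to the irreducible case via Proposition~\ref{prop:de_rham}: write $X=\prod_{i=1}^m X_i$ with each $X_i$ irreducible, and observe that two halfspaces from distinct factors are transverse, while essentiality forces each of their four quadrants to be unbounded, so they cannot bound halfspaces at finite Hausdorff distance. It follows that every fibre of $\mscr{P}$ lies entirely in a single $X_i$, and we obtain compatible splittings $\parclass(X)=\bigsqcup_i\parclass(X_i)$ and $\cmp(X)\cong\prod_i\cmp(X_i)$. Since no $X_i$ is $\R$, we may henceforth assume $X$ is irreducible and $X\not\cong\R$.

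The heart of the argument, and the main obstacle, is uniform finiteness of the fibres. Since $\Aut(X)$ acts on $\mscr{H}(X)$ with finitely many orbits, uniform boundedness is equivalent to finiteness of each individual fibre. Assume for contradiction that some $\mscr{Q}\in\parclass(X)$ is infinite. By Lemma~\ref{structure of SP hyp}(1) it contains an infinite totally-ordered chain of halfspaces, and cocompactness supplies $g\in\Aut(X)$ with $g\mf{h}\subsetneq\mf{h}$ for some $\mf{h}\in\mscr{Q}$. Replacing $g$ by a suitable power, $g$ acts stably without inversions and has a combinatorial translation axis, so $\{g^n\mf{h}\}_{n\in\Z}$ is a bi-infinite chain inside $\mscr{Q}$; write $\mscr{Q}_0$ for its bounding hyperplanes. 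Lemma~\ref{structure of SP hyp}(3) partitions the remaining hyperplanes of $X$ into $\mc{A}$ (transverse to every element of $\mscr{Q}_0$), $\mc{C}^+$ (contained in every $g^n\mf{h}$), and $\mc{C}^-$ (disjoint from every $g^n\mf{h}$). When $\mc{A}=\emptyset$, no hyperplane is transverse to $\mscr{Q}_0$, producing a product decomposition $X\cong R\times Y$ with $R$ one-dimensional dual to $\mscr{Q}_0$; essentiality forces $R\cong\R$ and irreducibility then forces $Y$ trivial, contradicting $X\not\cong\R$. The hard case is $\mc{A}\neq\emptyset$: here I would apply Proposition~\ref{corner} to a transverse pair $(\mf{u},\mf{v})\in\mc{A}\times\mc{C}^+$ to produce a corner halfspace inside $\mf{u}\cap\mf{v}$, classify it via Lemma~\ref{structure of SP hyp}(3), and propagate by the $\langle g\rangle$-translation along the chain, eventually violating cocompactness together with hyperplane-essentiality of the $\mf{w}_i$.

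The remaining conclusions follow rather directly. For the bi-Lipschitz statement, $\iota$ never shrinks distances by construction, since a wall in $\parclass(X)$ separating $v,w$ lifts to at least one hyperplane of $X$ separating $\iota(v),\iota(w)$; conversely, each such wall contributes at most $N$ hyperplanes (where $N$ is the uniform fibre bound), giving $d_X(\iota(v),\iota(w))\leq N\cdot d_{\cmp(X)}(v,w)$. For part~(2), $g^n\mf{h}\subsetneq\mf{h}$ together with $\Aut(X)$-equivariance of $\mscr{P}$ yields $g^n\mscr{P}(\mf{h})\preceq\mscr{P}(\mf{h})$ in $\parclass(X)$; if $g^n$ fixed $\mscr{P}(\mf{h})$, it would be an order-preserving automorphism of a finite totally-ordered fibre, hence the identity, contradicting $g^n\mf{h}\subsetneq\mf{h}$; so $g^n\mscr{P}(\mf{h})\prec\mscr{P}(\mf{h})$, which is precisely the skewering of the halfspace of $\cmp(X)$ corresponding to $\mscr{P}(\mf{h})$, and the converse is immediate from pocset functoriality. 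Finally, for part~(3), local finiteness of $\cmp(X)$ is inherited through the bi-Lipschitz embedding $\iota$; cocompactness descends from the induced $\Aut(X)$-action on $\parclass(X)$ via Lemma~\ref{pocset lemma}(4); essentiality and hyperplane-essentiality follow from part~(2) combined with the corresponding properties of $X$ and the skewering criterion; and the absence of halfspaces at finite Hausdorff distance in $\cmp(X)$ is immediate, since two such would lift to halfspaces of $X$ in the same parallelism class, hence representing the same element of $\parclass(X)$.
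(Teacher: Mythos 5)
Your general plan is sound and your treatment of parts~(2) and~(3) is essentially correct, but the key step in part~(1) has a genuine gap, and the case split you set up is organised in the wrong way.

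After producing the bi-infinite chain $\{g^n\mf{h}\}_{n\in\Z}\subseteq\mscr{Q}$, you invoke Lemma~\ref{structure of SP hyp}(3) to partition the remaining hyperplanes into $\mc{A}$, $\mc{C}^+$, $\mc{C}^-$. But you then miss the decisive observation: $\mc{C}^+$ and $\mc{C}^-$ are automatically empty. A hyperplane $\mf{v}\in\mc{C}^+$ would be contained in $g^{-n}\mf{h}$ for every $n\geq 0$, yet these halfspaces form a nested chain whose bounding hyperplanes are pairwise disjoint — so $\mf{v}$ would need to lie beyond arbitrarily many hyperplanes of $X$, which is impossible for a fixed hyperplane (any vertex of $\mf{v}$ has some fixed distance to $\partial\mf{h}$, while $d(g^{-n}\mf{h},\partial\mf{h})\to\infty$). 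Symmetrically for $\mc{C}^-$. Once this is recorded, every hyperplane outside $\mscr{Q}_0$ is transverse to every hyperplane in $\mscr{Q}_0$, giving either $\mscr{W}(X)=\mscr{Q}_0$ (so $X\cong\R$) or a product splitting $X\cong\R\times Y$ with $Y$ nontrivial; both contradict the hypothesis. There is no hard case.

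With that gap identified, your two cases are both problematic. In the ``easy'' case $\mc{A}=\emptyset$, you claim a product decomposition $X\cong R\times Y$, but that is exactly backwards: a product requires every hyperplane of the $\mscr{Q}_0$-factor to be \emph{transverse} to every other hyperplane, whereas $\mc{A}=\emptyset$ says \emph{nothing} outside $\mscr{Q}_0$ is transverse to $\mscr{Q}_0$. (In fact $\mc{A}=\emptyset$ would force $\mscr{W}(X)=\mscr{Q}_0$ once you know $\mc{C}^\pm=\emptyset$.) In the ``hard'' case $\mc{A}\neq\emptyset$, the plan — take $(\mf{u},\mf{v})\in\mc{A}\times\mc{C}^+$, apply Proposition~\ref{corner}, and propagate along the axis — never gets off the ground, because $\mc{C}^+$ is empty so the required pair does not exist; moreover there is no argument that $\mf{u}$ and $\mf{v}$ would be transverse even if $\mc{C}^+$ were nonempty, and the phrase ``eventually violating cocompactness together with hyperplane-essentiality'' is not a proof. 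The corner argument and the reduction to the irreducible case are both detours: the paper's proof needs neither, and goes straight from $\mc{C}^\pm=\emptyset$ to the forbidden $\R$-factor in the de Rham decomposition.
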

\begin{proof}
Let $n$ be the number of orbits of the action $\Aut(X)\acts\mscr{H}(X)$. If an element $\mscr{Q}\in\parclass(X)$ contains $>n$ 
halfspaces, there exist $g\in\Aut(X)$ and $\mf{h}\in\mscr{Q}$ such that $\mf{h}\subsetneq g\mf{h}\in\mscr{Q}$. 
In this case, the halfspaces $g^n\mf{h}$ are pairwise at finite Hausdorff distance, hence 
$\{g^n\mf{h}\}_{n\in\Z}\cu\mscr{Q}$. 
Part~(1) of Lemma~\ref{structure of SP hyp} implies that $\mscr{Q}$ is a bi-infinite 
chain and, by part~(3) of Lemma~\ref{structure of SP hyp}, every halfspace in $\mscr{Q}\sqcup\mscr{Q}^*$ is transverse to all the halfspaces in 
$\mscr{H}(X)-(\mscr{Q}\sqcup\mscr{Q}^*)$. This contradicts the assumption that $X$ has no de Rham factors isomorphic 
to $\R$. We conclude that all fibres of $\mscr{Q}$ have cardinality $\leq n$, hence $\iota$ is $n$--Lipschitz. This yields
part~(1). Parts~(2) and~(3) follow immediately.
\end{proof}

We now make Definition~\ref{defn:bald} from the introduction a bit more precise (recalling that, in a hyperplane-essential cube complex, 
halfspaces at finite Hausdorff distance are always bounded by effectively parallel hyperplanes).

\begin{defn}
A $\CAT$ cube complex $X$ is \emph{bald} if it is essential, hyper\-plane-essential and, moreover, the 
following holds. If $\mf{w}_1,\mf{w}_2\in\mscr{W}(X)$ are effectively parallel, there exists a factor $L$ in 
the de Rham decomposition of $X$ such that $L\cong\R$ and $\mf{w}_1,\mf{w}_2\in\mscr{W}(L)$. 

A \emph{bald cubulation} is a proper, cocompact action on a bald cube complex.
\end{defn}

Recall that, as defined in Section~\ref{section:hypgps}, if a hyperbolic group $G$ acts properly and cocompactly on a $\CAT$ cube complex $X$, then every subset $\Om\cu X$ determines limits sets $\L\Om\cu\partial_{\infty}G$ and $\partial_{\infty}\Om\cu\partial_{\infty}X$.

\begin{prop}\label{facts about panel collapse}
Let a group $G$ act properly and cocompactly on a $\CAT$ cube complex $X$. Then there exists another $\CAT$ cube complex $X_{\bullet}$ and a proper cocompact action $G\acts X_{\bullet}$ such that:
\begin{enumerate}
\item $X_{\bullet}$ is bald;
\item if $G$ is hyperbolic and $g\in G$ skewers $\mf{w}\in\mscr{W}(X)$, there exists a hyperplane $\mf{u}\in\mscr{W}(X_{\bullet})$ such that $\mf{u}$ is skewered by $g$ and $\L\mf{u}\cu\L\mf{w}$.
\end{enumerate}
\end{prop}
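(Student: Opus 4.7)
The plan is to produce $X_\bullet$ from $X$ by three successive modifications --- essentialisation, hyperplane-essentialisation via panel collapse, and compression --- checking at each stage that property~(2) persists by tracking that hyperplane limit sets only shrink. First, I would apply Proposition~3.5 of~\cite{CS} to restrict to a $G$--invariant convex subcomplex $X_1\cu X$ on which $G$ acts properly, cocompactly and essentially. The hyperplanes of $X_1$ are exactly those of $X$ meeting $X_1$, and their limit sets are contained in the corresponding ones in $X$. If $G$ is hyperbolic and $g\in G$ skewers $\mf w\in\mscr{W}(X)$, then $g$ has infinite order and its boundary fixed points $g^{\pm}$ lie in opposite sides of $\mf w$. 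A combinatorial axis of $g$ inside $X_1$ asymptotes to $g^{\pm}$ and, viewed as a path in $X$, must cross $\mf w$. Hence $\mf w$ meets $X_1$ and is skewered by $g$ there, with limit set no larger than $\L\mf w$.

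Next, I would apply the panel collapse theorem of~\cite{Hagen-Touikan} to $X_1$, obtaining a proper, cocompact, essential, hyperplane-essential $G$--action on a cube complex $X_2$. Property~(2) transfers to $X_2$: by the construction of~\cite{Hagen-Touikan}, each surviving hyperplane of $X_2$ descends from a hyperplane of $X_1$ (possibly with smaller stabiliser), so its limit set is contained in that of its parent. If the axis of $g$ in $X_1$ crosses a hyperplane that gets collapsed, a combinatorial axis of $g$ in $X_2$ still picks up some surviving hyperplane whose limit set is contained in $\L\mf w$ and which is skewered by $g$.

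Finally, I would apply compression (Lemma~\ref{removing SP hyp}) to kill off strongly parallel hyperplanes of $X_2$, after splitting off the $\R$--factors. By Proposition~\ref{prop:de_rham}, write $X_2\cong A\x B$, where $A$ is the product of the de Rham factors isomorphic to $\R$ and $B$ is the product of the remaining factors. Since $G$ permutes de Rham factors preserving their isomorphism type, this decomposition is $G$--invariant. Each $\R$--factor is an irreducible cocompact $1$--dimensional $\CAT$ cube complex homeomorphic to the line, hence the standard integer tiling of $\R$. Applying Lemma~\ref{removing SP hyp} to $B$ (which has no $\R$ in its de Rham decomposition) gives $\cmp(B)$, still locally finite, cocompact, essential, hyperplane-essential, and now free of halfspaces at finite Hausdorff distance. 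Set $X_\bullet=A\x\cmp(B)$: since compression is $\Aut(B)$--equivariant, the $G$--action descends to a proper cocompact action on $X_\bullet$. Hyperplanes lying in different de Rham factors are transverse, and $\cmp(B)$ has no strongly parallel hyperplanes, so any strongly parallel pair of hyperplanes in $X_\bullet$ lies in a single $\R$--factor of $A$; thus $X_\bullet$ is bald. Property~(2) passes through this step by Lemma~\ref{removing SP hyp}(2) for hyperplanes of $X_2$ coming from $B$, and trivially for those coming from $A$.

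The main obstacle is Step~2: extracting from~\cite{Hagen-Touikan} the precise statement needed for property~(2), since panel collapse can delete or merge hyperplanes and one must verify that sufficiently many of the skewered hyperplanes survive with appropriately bounded limit sets. The other steps are essentially routine invocations of \cite{CS}, the de Rham decomposition, and Lemma~\ref{removing SP hyp}.
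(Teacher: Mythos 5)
Your proposal follows essentially the same route as the paper: essentialise, iterate panel collapse to reach hyperplane-essentiality, then split off the $\R$--factors in the de Rham decomposition and compress the remainder. You correctly flag the crux: one needs a precise relationship between the hyperplanes before and after panel collapse to make property~(2) survive, and your sketch (``a combinatorial axis of $g$ in $X_2$ still picks up some surviving hyperplane\dots'') is hand-wavy at exactly this point.

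Here is how the paper fills that gap. First, $X$ is replaced by its cubical subdivision so that $G$ acts without inversions (a hypothesis you omit but which Theorem~A of~\cite{Hagen-Touikan} uses). Theorem~A of~\cite{Hagen-Touikan} then produces a $G$--invariant subcomplex $Y\cu X$ with $Y^{(0)}=X^{(0)}$, whose hyperplanes are precisely the connected components of $\mf u\cap Y$ as $\mf u$ ranges over $\mscr W(X)$, and whose orbit-complexity is strictly smaller. Iterating gives $Z\hookrightarrow X$ essential, hyperplane-essential, $G$--equivariantly embedded with $Z^{(0)}\cu X^{(0)}$. Now if $g$ skewers $\mf w\in\mscr W(X)$, one has $\mf w\cap Z=\bigsqcup_i\mf w_i$ with each $\mf w_i\in\mscr W(Z)$ and hence $\L\mf w_i\cu\L\mf w$. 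Taking an axis $\gamma$ of $g$ in $Z$: if $\gamma$ crossed no $\mf w_i$, then $\gamma$ would lie in a single component of $Z-\bigcup_i\mf w_i$, hence $\gamma$ (viewed inside $X$) would be an embedded $\langle g\rangle$--invariant path disjoint from $\mf w$, forcing $g^{\pm}$ onto the same side of $\L\mf w$ and contradicting that $g$ skewers $\mf w$ in $X$. So $\gamma$ crosses some $\mf w_i$, which a power of $g$ then skewers. The rest of your argument (the $\R^m\x W$ splitting and compression of $W$) matches the paper exactly.
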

\begin{proof}
 By replacing $X$ with the cubical subdivision, we can assume that $G$ acts on $X$ without 
inversions.  Let $\#(X)=(n_0,\dots,n_{\mathrm{dim} X-2})$, where for each $i$, $n_i$ is the number of 
$G$--orbits of $i$--cubes.

If $X$ is hyperplane-essential, then, by passing to the $G$--essential core, we can 
assume that the cube complex is essential and hyperplane-essential (cf.\ Proposition~3.5 in \cite{CS}).

If not, then, by Theorem A of~\cite{Hagen-Touikan}, $X$ contains a $G$--invariant subspace 
$Y$ that has the structure of a $\CAT$ cube complex, with $Y^{(0)}=X^{(0)}$.  Moreover, the set of hyperplanes 
of $Y$ is exactly the set of components of subspaces of the form $\mf u\cap Y$, where $\mf u$ is a hyperplane 
of $X$.  Finally, the action of $G$ on $Y$ is without inversions, and $\#(Y)<\#(X)$ (in lexicographic order).

Iterating finitely many times, we find a hyperplane-essential cocompact action $G\acts Z$, where $Z$ is a 
$\CAT$ cube complex $G$--equivariantly embedded in $X$; by replacing $Z$ with its $G$--essential core, we have 
that $Z$ is essential and hyperplane-essential, and $Z^{(0)}\subseteq X^{(0)}$.

Since $G$ acts on $X$ properly and $Z\hookrightarrow X$ is $G$--equivariant, each $0$--cube of $X$, and hence each $0$--cube of $Z$, has 
finite stabiliser in $G$. Since $G$ acts on $Z$ cocompactly, the action of $G$ on $Z$ is therefore 
proper. We are left to deal with effectively parallel hyperplanes, in order to ensure that $Z$ is bald. 

Isolating the $\R$--factors in the de Rham decomposition of $Z$,  we obtain a splitting $Z=\R^m\x W$, where $m\geq 0$ and $W$ 
is a $\CAT$ cube complex with no $\R$--factors. Observe that $G$ leaves invariant this decomposition of $Z$, and the induced 
action $G\acts W$ is cocompact.

We set $X_{\bullet}=\R^m\x\cmp(W)$. Observe that the $G$--action descends to $X_{\bullet}$. By part~(1) of Lemma~\ref{removing SP hyp}, $X_{\bullet}$ is $G$--equivariantly quasi-isometric to $Z$; hence $G\acts X_{\bullet}$ is proper and cocompact. By part~(3) of Lemma~\ref{removing SP hyp}, $X_{\bullet}$ is bald. This completes the proof of part~(1).

We now assume that $G$ is hyperbolic and prove part~(2). Hyperbolicity implies that $m=0$, so $Z=W$ and $X_{\bullet}=\cmp(Z)$.

Suppose that $g\in G$ skewers a hyperplane $\mf w$ of $X$.  By Theorem A 
of~\cite{Hagen-Touikan}, $\mf w\cap 
Z=\bigsqcup_{i\in\mathcal I}\mf w_i$, where each $\mf w_i$ is a hyperplane of $Z$. In particular, $\L\mf 
w_i\subseteq \L\mf w$ for each $i\in\mathcal I$.  

Let $\gamma$ be an axis for $g$ in $Z$, so that the endpoints of $\gamma$ are $g^\pm\in\partial_\infty Z$.  
Suppose that no $\mf w_i$ is skewered by $\gamma$.  Then for each $i$, we can choose a component $\mf w_i^+$ of 
$Z-\mf 
w_i$ so that $\gamma\subseteq\bigcap_i\mf w_i^+$.  Hence $\gamma$ is a $\langle 
g\rangle$--invariant embedded path in $X$ which is disjoint from $\mf w$, so $g^-,g^+$ lie on the same side of 
$\L\mf w$ in $\partial_\infty Z$, contradicting that $g$ skewers $\mf{w}$.

In conclusion, $g$ skewers a hyperplane $\mf{w}_i\in\mscr{W}(Z)$ and $\L\mf 
w_i\subseteq \L\mf w$. Part~(2) of the proposition now follows from part~(2) of Lemma~\ref{removing SP hyp}.
%
%
%
\end{proof}

\section{Bending hyperplanes.}\label{sec:bending}

\subsection{Controlling families of hyperplanes.}

Let a group $G$ act properly and cocompactly on a $\CAT$ cube complex $X$. As usual, we endow $X$ with its $\ell_1$ metric and set $d=\dim X$.


We are interested in the case when the quotient $G\backslash X$ is a \emph{special} cube complex, in the sense of~\cite[Definition 3.2]{Haglund-Wise-GAFA}. Recall that two distinct hyperplanes $\mf a_1,\mf a_2$ in $G\backslash X$ \emph{inter-osculate} if they both intersect and osculate: there is a square whose barycentre is in $\mf a_1\cap \mf a_2$, and there are also $1$--cubes dual to $\mf a_1,\mf a_2$ that share a vertex but do not lie in a common square. A special cube complex never has inter-osculating hyperplanes.

\begin{lem}\label{girthy cover 2}
Suppose that the quotient $G\backslash X$ is a special cube complex. 
Given disjoint hyperplanes $\mf{w}_1$ and $\mf{w}_2$, there exists a finite-index subgroup $H\leq G$ such that any two elements of $H\cdot\mf{w}_1\cup H\cdot\mf{w}_2$ are disjoint and at distance $\geq\tfrac{1}{d}\cdot d(\mf{w}_1,\mf{w}_2)$.
\end{lem}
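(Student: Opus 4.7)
The plan is to exploit the separability consequences of specialness of $G\backslash X$. By Haglund--Wise, all hyperplanes of $X$ are two-sided (so $G$ acts without inversions and each halfspace-stabiliser coincides with the corresponding hyperplane-stabiliser) and every hyperplane-stabiliser $G_{\mf w}$ is separable in $G$.

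Set $\ell=d(\mf w_1,\mf w_2)$. First, I would apply Lemma~\ref{girthy cover} with $n=\lceil \ell/d\rceil$ to obtain a finite-index normal subgroup $G'\lhd G$ acting without inversions and such that any two distinct elements in any single orbit $G'\cdot\mf w$ are disjoint and at distance at least $\ell/d$. This handles all within-orbit distances simultaneously.

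Next, I would control cross-orbit distances. Using cocompactness of $G_{\mf w_j}\acts\mf w_j$ (Lemma~\ref{cocompact hyperplanes}) together with local finiteness of $X$, for each $i,j\in\{1,2\}$ the set of $g\in G$ with $g\mf w_i\neq\mf w_j$ and $d(g\mf w_i,\mf w_j)<\ell/d$ decomposes into a finite union of double cosets $G_{\mf w_j}\cdot g_k\cdot G_{\mf w_i}$. I would then pass to a finite-index subgroup of $G$ missing each such double coset, and intersect with $G'$ to produce the desired $H$. Translation-invariance of the metric then yields the distance bound for an arbitrary pair of translates in $H\cdot\mf w_1\cup H\cdot\mf w_2$, and disjointness follows because the relevant pairs of halfspace-stabilisers intersect trivially in hyperplane-stabilisers.

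The main obstacle is the previous step: ordinary separability of $G_{\mf w_i}$ lets one avoid finite sets of elements outside $G_{\mf w_i}$, but here one must avoid an entire double coset $G_{\mf w_j}\cdot g_k\cdot G_{\mf w_i}$ in a single finite-index subgroup. This stronger double-coset separability for hyperplane-stabilisers is what the hypothesis of specialness is really being used for: it follows from Haglund--Wise's canonical completion and retraction machinery applied to $G\backslash X$. Once available, intersecting the finitely many finite-index subgroups supplied by it with $G'$ completes the construction.
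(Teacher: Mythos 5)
Your proposal takes a genuinely different route from the paper's proof. The paper avoids double coset separability altogether: it fixes the maximal number $m$ of pairwise-disjoint hyperplanes separating $\mf w_1$ and $\mf w_2$, and proceeds by induction on $m$. In the inductive step it chooses a hyperplane $\mf u$ with $m-1$ disjoint separators between $\mf u$ and $\mf w_1$, applies the inductive hypothesis to get a finite-index $K$, passes to the restriction quotient tree $\mc T$ obtained by collapsing all hyperplanes outside $K\cdot\mf u$, and uses that the vertex $v_1$ (image of $\mf w_1$) and the orbit $K\cdot v_2$ (image of $K\cdot\mf w_2$) can be separated in a further finite-index subgroup via ordinary separability of the vertex stabiliser (Haglund--Wise, Corollary 7.9). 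The base case $m=0$ is handled by the absence of inter-osculating hyperplanes in the special quotient. So the paper only ever invokes separability of a single subgroup.

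Your approach instead reduces to avoiding finitely many double cosets $G_{\mf w_j} g_k G_{\mf w_i}$ in a finite-index subgroup, which requires double coset separability for hyperplane (or halfspace) stabilisers. The reduction itself is correct: by cocompactness of the hyperplane actions and local finiteness, the exceptional set $S_{ij}$ really is a finite union of such double cosets, $1\notin S_{ij}$ since $\mf w_1,\mf w_2$ are disjoint, and translation-invariance of $d$ gives the bound for arbitrary pairs of $H$-translates once $H\cap S_{ij}=\emptyset$. What you identify correctly as the crux — separability of the double cosets themselves — is a strictly stronger input than the single-subgroup separability the paper uses. It does hold for hyperplane subgroups of special groups, and appealing to the canonical completion/retraction is the right place to look, but you should be aware this is a heavier black box than the paper needs; the paper's induction buys the result from only the basic separability statement, which is why it cites only Corollary 7.9 of \cite{Haglund-Wise-GAFA}.

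One small correction: your final remark that ``disjointness follows because the relevant pairs of halfspace-stabilisers intersect trivially in hyperplane-stabilisers'' is not the right explanation. Disjointness of the translates is an immediate consequence of the positive distance bound: if $d(h\mf w_i,h'\mf w_j)\geq \ell/d>0$ then they cannot intersect. (In the paper's inductive claim, disjointness in the base case $m=0$ is where the no-inter-osculation property of the special quotient is used.)
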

\begin{proof}
By Corollary~7.9 in \cite{Haglund-Wise-GAFA}, halfspace-stabilisers are separable in $G$.  Thus, Lemma~\ref{girthy cover} 
allows us to assume that any two hyperplanes in the same $G$--orbit are disjoint and at distance $\geq\tfrac{1}{d}\cdot 
d(\mf{w}_1,\mf{w}_2)$. Let $n\geq 0$ be maximal such that $\mscr{W}(\mf{w}_1|\mf{w}_2)$ contains $n$ pairwise-disjoint 
hyperplanes. By Dilworth's lemma, we have $n\geq\tfrac{1}{d}\cdot (d(\mf{w}_1,\mf{w}_2)-1)$.

\smallskip
{\bf Claim:} \emph{Given disjoint $\mf{u}_1,\mf{u}_2\in\mscr{W}(X)$ and $m\geq 0$ maximal such that 
$\mscr{W}(\mf{u}_1|\mf{u}_2)$ contains $m$ pairwise-disjoint hyperplanes, there exists a finite-index subgroup $L\leq G$ such 
that every element of $L\cdot\mf{u}_2$ is separated from $\mf{u}_1$ by at least $m$ pairwise-disjoint hyperplanes (just 
disjoint from $\mf{u}_1$ if $m=0$).}

\smallskip
Applying the claim to $\mf{u}_i=\mf{w}_i$ clearly concludes the proof. We will prove the claim by induction on $m\geq 0$.

The base step $m=0$ is immediate taking $L=G$, as the quotient $G\backslash X$ has no inter-osculating 
hyperplanes.  

When 
$m\geq 1$, we can pick $\mf{u}\in\mscr{W}(\mf{u}_1|\mf{u}_2)$ such that $\mscr{W}(\mf{u}|\mf{u}_1)$ contains $m-1$ 
pairwise-disjoint hyperplanes. The inductive hypothesis yields a finite-index subgroup $K\leq G$ such that every element of 
$K\cdot\mf{u}$ is separated from $\mf{u}_1$ by at least $m-1$ pairwise-disjoint hyperplanes and such that no element of 
$K\cdot\mf{u}$ is transverse to $\mf{u}_2$ or $\mf{u}_1$.

Since no two elements of $K\cdot\mf{u}$ are transverse, the corresponding restriction quotient (see~\cite[Section 2.3]{CS}) 
of $X$ is a tree $\mc{T}$.  Recall that the preimage in $\mc{T}$ of the midpoint of any edge is a hyperplane 
in 
$K\cdot \mf{u}$, every hyperplane in $K\cdot\mf{u}$ is sent to the midpoint of an edge, and all other 
hyperplanes are collapsed to vertices.

Since the hyperplanes $\mf{u}_1$ and $\mf{u}_2$ are contained in distinct connected components of 
$X-\bigcup K\cdot\mf{u}$, they get collapsed to distinct vertices $v_1,v_2\in\mc{T}$. 

Let $k\in K$ be such that $kv_2\neq v_1$.  Then $kv_2$ and $v_1$ are separated by an edge of $\mc{T}$; the 
preimage of the midpoint of this edge is a hyperplane $\mf{v}\in K\cdot\mf{u}$ that separates the preimages of 
$kv_2$ and $v_1$.  In particular, $\mf{v}$ separates $\mf{u}_1$ from $k\mf{u}_2$.
Hence, the set
\[\mscr{W}(k\mf{u}_2|\mf{u}_1)\supseteq \{\mf{v}\}\sqcup\mscr{W}(\mf{v}|\mf{u}_1)\]
contains at least $m$ pairwise-disjoint hyperplanes. The $K$--stabiliser of $v_2$ is separable in $K$ by Corollary~7.9 in 
\cite{Haglund-Wise-GAFA}.  It follows that there exists a finite-index subgroup $L\leq K$ such that $v_1\not\in L\cdot v_2$. 
Every element of $L\cdot\mf{u}_2$ is then separated from $\mf{u}_1$ by at least $m$ pairwise-disjoint hyperplanes. 
\end{proof}

\begin{prop}\label{constructing systems of switches}
Suppose that $G$ is one-ended and that $X$ is essential, hy\-per\-plane-essential and $\delta$--hyperbolic.  For every $n>0$, 
there exist $m\geq 4$, hyperplanes $\mf{u}_1,\dots,\mf{u}_m$ and a finite-index subgroup $H\leq G$ such that:
\begin{itemize}
\item $G\cdot\{\mf{u}_1,\dots,\mf{u}_m\}=\mscr{W}(X)$; 
\item $\mf{u}_i$ is transverse to $\mf{u}_{i+1}$ for every $1\leq i< m$;
\item any two elements of $H\cdot\mf{u}_{i-1}\cup H\cdot\mf{u}_{i+1}$ are at distance $\geq n$, for every $1<i<m$;
\item $H\cdot\mf{u}_i\neq H\cdot\mf{u}_j$ whenever $i\neq j$.
\end{itemize}
\end{prop}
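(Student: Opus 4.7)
I would prove the proposition in three stages: (1) extract an ``orbit walk'' covering all $G$-orbits of hyperplanes from a connected transversality graph; (2) realise the walk by concrete hyperplanes $\mf u_1,\dots,\mf u_m$ with consecutive ones transverse and pairs two apart already far apart in $X$; (3) pass to a deep finite-index $H\leq G$ to enforce the quantitative distance bound and the distinct-orbit condition.

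\emph{Stage (1).} Define the graph $\mathcal G$ with vertices the $G$-orbits in $\mscr{W}(X)$ and an edge between $[\mf u]$ and $[\mf v]$ whenever some pair of representatives is transverse. If $\mathcal G$ were disconnected, a connected component would induce a partition of $\mscr{W}(X)$ into two nonempty $G$-invariant families, no element of one being transverse to any element of the other; this is forbidden by Lemma~\ref{not one-ended}. Hence $\mathcal G$ is connected, so I can choose a walk $[\mf v_1],\dots,[\mf v_m]$ hitting every vertex and, by adding back-and-forth steps along one edge, arrange $m\geq 4$.

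\emph{Stage (2).} I construct $\mf u_j\in[\mf v_j]$ inductively. Given transverse $\mf u_{i-1},\mf u_i$ and the target orbit $T=[\mf v_{i+1}]$ (adjacent to $[\mf v_i]$ in $\mathcal G$), the edge and the $G$-action yield some $\mf u_{i+1}^{(0)}\in T$ transverse to $\mf u_i$. The stabiliser $G_{\mf u_i}$ is quasiconvex in the hyperbolic group $G$, hence hyperbolic, and acts properly cocompactly on the essential $\CAT$ cube complex $\mf u_i$ by Lemma~\ref{cocompact hyperplanes}. Write $\mf z_1=\mf u_{i-1}\cap\mf u_i$ and $\mf z_2^{(0)}=\mf u_{i+1}^{(0)}\cap\mf u_i$, both essential hyperplanes of $\mf u_i$ since $\mf u_i$ is essential by hyperplane-essentiality of $X$. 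Applying Lemma~\ref{disjoining hyperplanes} to $G_{\mf u_i}\acts\mf u_i$ produces $g\in G_{\mf u_i}$ with $d_{\mf u_i}(\mf z_1,g\mf z_2^{(0)})\geq nd$. Set $\mf u_{i+1}=g\mf u_{i+1}^{(0)}\in T$; it is still transverse to $\mf u_i$, with $\mf u_{i+1}\cap\mf u_i=g\mf z_2^{(0)}$. Using the discussion of bridges in Section~\ref{sec:cube_complex_prelim}, for intersecting convex subcomplexes the bridge collapses to their intersection, so $\pi_{\mf u_i}(\mf u_{i-1})=\mf z_1$ and $\pi_{\mf u_i}(\mf u_{i+1})=g\mf z_2^{(0)}$. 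Since gate-projections are $1$-Lipschitz,
\[
d(\mf u_{i-1},\mf u_{i+1})\ \geq\ d_{\mf u_i}(\mf z_1,g\mf z_2^{(0)})\ \geq\ nd,
\]
and in particular $\mf u_{i-1}$ and $\mf u_{i+1}$ are disjoint. A further application of Lemma~\ref{disjoining hyperplanes} in this step also arranges that $\mf u_{i+1}$ is distinct from every previously chosen $\mf u_j$.

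\emph{Stage (3).} By Agol and \cite{Haglund-Wise-GAFA}, pick a finite-index $G^*\lhd G$ with $G^*\backslash X$ special, so that halfspace-stabilisers are separable in $G^*$. For each $1<i<m$, Lemma~\ref{girthy cover 2} applied to the disjoint pair $(\mf u_{i-1},\mf u_{i+1})$ supplies a finite-index $H_i\leq G^*$ with any two elements of $H_i\cdot\mf u_{i-1}\cup H_i\cdot\mf u_{i+1}$ disjoint and at distance $\geq nd/d=n$. For condition~(4), whenever $\mf u_i\neq\mf u_j$ lie in the same $G$-orbit, write $\mf u_i=g_{ij}\mf u_j$ with $g_{ij}\notin G_{\mf u_j}$; separability of the quasiconvex stabiliser $G_{\mf u_j}$ yields a finite-index $H_{ij}\lhd G^*$ with $g_{ij}\notin H_{ij}G_{\mf u_j}$, so $\mf u_i$ and $\mf u_j$ lie in distinct $H_{ij}$-orbits. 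Setting $H$ to be the intersection of the finitely many $H_i$ and $H_{ij}$ produces a finite-index subgroup of $G$ meeting every required property.

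The main obstacle is the disjointness claim in Stage~(2): controlling only the trace of $\mf u_{i+1}$ on $\mf u_i$ does not a priori prevent $\mf u_{i-1}$ and $\mf u_{i+1}$ from meeting somewhere off $\mf u_i$. The identification $\pi_{\mf u_i}(\mf u_{i\pm 1})=\mf u_{i\pm 1}\cap\mf u_i$ for transverse hyperplanes, combined with $1$-Lipschitz gate-projections, is precisely what transfers $\mf u_i$-internal control to genuine $X$-distance and makes the induction go through.
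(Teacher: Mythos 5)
Your proof is correct and takes essentially the same route as the paper's: both extract the initial transverse chain from the connectedness result (Lemma~\ref{not one-ended}), both push $\mf u_{i+1}$ far from $\mf u_{i-1}$ by applying Lemma~\ref{disjoining hyperplanes} to the proper cocompact action $G_{\mf u_i}\acts\mf u_i$, and both finish with Lemma~\ref{girthy cover 2} plus separability of hyperplane stabilisers. Your Stage~(2) usefully makes explicit a step the paper leaves implicit: the passage from the internal $\mf u_i$-distance produced by Lemma~\ref{disjoining hyperplanes} to the ambient $X$-distance, via the fact that $\pi_{\mf u_i}$ is $1$-Lipschitz and carries $\mf u_{i\pm1}$ onto $\mf u_i\cap\mf u_{i\pm1}$; the paper's proof instead moves the finite-index reduction and the speciality/girth normalisation to the very beginning and interleaves the $\mf u_{i+1}$-translation with successive passages to smaller $H$, but the content is the same.
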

\begin{proof}
By Theorem~1.1 in \cite{Agol}, we can assume that the quotient $G\backslash X$ is a special cube complex.  By 
Lemma~\ref{girthy cover}, we can further assume that any two hyperplanes in the same $G$--orbit are disjoint and at distance 
${\geq n}$. By Lemma~\ref{not one-ended}, there exists a sequence $\mf{u}_1,\dots,\mf{u}_m$ of (not necessarily distinct)
hyperplanes satisfying the first two conditions. 

Since hyperplane-stabilisers are separable~\cite[Theorem 1.3]{Haglund-Wise-GAFA}, the fourth condition can always be ensured 
by passing to a further finite-index subgroup,  as long as the other three conditions are satisfied and the $\mf{u}_i$ are 
pairwise distinct. We will progressively modify $\mf{u}_1,\dots,\mf{u}_m$ in order to ensure that we are in this situation.
  
Consider $1<j<m$ and a finite index-subgroup $H\leq G$ such that $\mf{u}_1,\dots,\mf{u}_j$ are pairwise distinct and the third 
condition holds for all ${i<j}$.  Since the action $G_{\mf{u}_j}\acts\mf{u}_j$ is proper and cocompact by Lemma~\ref{cocompact 
hyperplanes}, Lem\-ma~\ref{disjoining hyperplanes} yields $g\in G_{\mf{u}_j}$ such that $d(g\mf{u}_{j+1},\mf{u}_{j-1})\geq dn$. 
We can moreover ensure that $g\mf{u}_{j+1}\not\in\{\mf{u}_1,\dots,\mf{u}_j\}$. For $l\geq j+1$, we replace each $\mf{u}_l$ with 
$g\mf{u}_l$. Note that the new hyperplanes still satisfy the first two conditions and, for $i<j$, also the third. Since now 
$d(\mf{u}_{j-1},\mf{u}_{j+1})\geq dn$, Lemma~\ref{girthy cover 2} yields a finite-index subgroup $K\leq H$ such that any two 
elements of $K\cdot\mf{u}_{j-1}\cup K\cdot\mf{u}_{j+1}$ are at distance $\geq n$. Replacing $H$ with $K$, the third condition 
is now satisfied for $i\leq j$ and $\mf{u}_1,\dots,\mf{u}_{j+1}$ are pairwise distinct.
\end{proof}

Now let $X$ be $\delta$--hyperbolic and consider hyperplanes $\mf{v}_0,\dots,\mf{v}_m$ such that:
\begin{itemize}
\item $\mf{v}_i$ is transverse to $\mf{v}_{i+1}$ for every $0\leq i<m$;
\item $d(\mf{v}_{i-1},\mf{v}_{i+1})\geq n$ for every $0<i<m$.
\end{itemize}
If $x\in\mf{v}_0$ and $y\in\mf{v}_m$ are vertices of the respective hyperplanes, we set $x_0=x$ and, for each $0\leq i\leq m-1$, we define inductively $x_{i+1}$ as the gate-projection of $x_i$ to $\mf{v}_{i+1}$. Note that $x_{i+1}\in\mf{v}_i\cap\mf{v}_{i+1}$. Finally, set $x_{m+1}=y$. Joining each $x_i$ to $x_{i+1}$ by an $\ell_1$ geodesic, we obtain a path $\eta\cu\mf{v}_0\cup\dots\cup\mf{v}_m\cu X$, which we will refer to as a \emph{standard path} from $x$ to $y$. 

By construction, the segment of $\eta$ that joins $x_i$ to $x_{i+2}$ is a geodesic for all $0\leq i\leq m-1$. It follows that $\eta$ is a $k$--local geodesic, with:
\[k\geq\min_{1\leq i\leq m-1}d(x_i,x_{i+1})\geq\min_{1\leq i\leq m-1}d(\mf{v}_{i-1},\mf{v}_{i+1})\geq n.\]
When $X$ is $\delta$--hyperbolic and $n>8\delta$, Theorem~III.H.1.13 in \cite{BH} guarantees that $\eta$ is a 
$(3,2\delta)$--quasi-geodesic.  By the Morse Lemma, there exists a constant $K=K(\delta)$ such that every geodesic in $X$ 
connecting $x$ and $y$ is at Hausdorff distance $\leq K$ from any standard path $\eta$. 

\begin{lem}\label{sequences of hyperplanes}
Let $X$ and $\mf{v}_0,\dots,\mf{v}_m$ be as above, with $m\geq 2$ and $n>8\delta$. 
\begin{enumerate}
\item Any geodesic from a point $x\in\mf{v}_0$ to a point $y\in\mf{v}_m$ has Hausdorff distance $\leq K$ from any standard 
path $\eta\cu\mf{v}_0\cup\dots\cup\mf{v}_m$ from $x$ to $y$. In particular, the union $\mf{v}_0\cup\dots\cup\mf{v}_m$ is 
$K$--quasiconvex in $X$.
\item We have $d(\mf{v}_0,\mf{v}_m)\geq\tfrac{n(m-1)}{3}-2\delta$. Thus, $\mf{v}_0$ and $\mf{v}_m$ are disjoint.
\end{enumerate}
\end{lem}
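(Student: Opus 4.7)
The first assertion of part~(1) is essentially a restatement of the argument given in the paragraph immediately preceding the lemma: the standard path $\eta$ is a $k$--local geodesic with $k \geq n > 8\delta$, hence a $(3, 2\delta)$--quasi-geodesic by \cite[Theorem~III.H.1.13]{BH}, and the Morse Lemma for hyperbolic spaces places every $\ell_1$--geodesic from $x$ to $y$ within Hausdorff distance $K = K(\delta)$ of $\eta$.

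To deduce the $K$--quasiconvexity of $\mf{v}_0 \cup \dots \cup \mf{v}_m$, I would run the same construction between arbitrary vertices $p \in \mf{v}_i$, $q \in \mf{v}_j$ of the union (with $i \leq j$). When $j - i \geq 2$, the sub-sequence $\mf{v}_i, \dots, \mf{v}_j$ still satisfies the transversality and distance hypotheses of the lemma, so a standard path from $p$ to $q$ is a $(3, 2\delta)$--quasi-geodesic lying inside $\mf{v}_i \cup \dots \cup \mf{v}_j$, and Morse again places any geodesic in its $K$--neighbourhood. When $j - i \leq 1$, I would instead use convexity of each $\mf{v}_k$ together with the transversality of consecutive hyperplanes to write down a piecewise geodesic from $p$ to $q$ routed through $\pi_{\mf{v}_j}(p) \in \mf{v}_i \cap \mf{v}_j$ (when $j > i$) that lies in $\mf{v}_i \cup \mf{v}_j$; any other geodesic is $O(\delta)$--close by hyperbolicity. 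Absorbing these constants into $K$ yields quasiconvexity.

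For part~(2), the plan is to lower-bound the length of any standard path. For each $1 \leq i \leq m - 1$, the point $x_{i+1}$ is the gate projection of $x_i$ onto $\mf{v}_{i+1}$, so $d(x_i, x_{i+1}) = d(x_i, \mf{v}_{i+1}) \geq d(\mf{v}_{i-1}, \mf{v}_{i+1}) \geq n$, using that $x_i \in \mf{v}_{i-1}$. Summing over $i = 1, \dots, m - 1$ gives $\mathrm{length}(\eta) \geq (m-1)n$. The $(3, 2\delta)$--quasi-geodesic property, applied to the arc-length parametrisation of $\eta$, then yields
\[
d(x, y) \geq \tfrac{1}{3}\,\mathrm{length}(\eta) - 2\delta \geq \tfrac{(m-1)n}{3} - 2\delta.
\]
Since $x \in \mf{v}_0$ and $y \in \mf{v}_m$ were arbitrary, this bounds $d(\mf{v}_0, \mf{v}_m)$ from below. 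Positivity of the right-hand side (ensured by $m \geq 2$ and $n > 8\delta > 6\delta$) forces $\mf{v}_0$ and $\mf{v}_m$ to be disjoint.

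The only mildly fiddly step is the small-case analysis $j - i \leq 1$ in the quasiconvexity part of~(1); this is an elementary hyperbolic estimate rather than a substantive obstacle. The bulk of the work has already been carried out in the paragraph preceding the statement.
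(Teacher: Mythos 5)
Your proposal is correct and follows the same route as the paper: the local-geodesic bound combined with \cite[Theorem~III.H.1.13]{BH} and the Morse Lemma gives part~(1), and the length estimate on a standard path together with the $(3,2\delta)$--quasi-geodesic property gives part~(2). Two cosmetic differences worth noting: the paper proves part~(2) by first choosing a distance-realising pair $(x,y)$ on the bridge between $\mf{v}_0$ and $\mf{v}_m$, whereas you correctly observe this is unnecessary since $\mathrm{length}(\eta)\geq n(m-1)$ holds for every standard path and hence $d(x,y)\geq\tfrac{n(m-1)}{3}-2\delta$ for all $x\in\mf{v}_0$, $y\in\mf{v}_m$; and your explicit treatment of the quasiconvexity claim by restricting to a sub-chain $\mf{v}_i,\dots,\mf{v}_j$ (with the trivial cases $j-i\leq1$ handled separately) makes precise what the paper dismisses as ``already shown in the above discussion.''
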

\begin{proof}
We have already shown part~(1) in the above discussion.  Regarding part~(2), pick vertices $x\in\mf{v}_0$ 
and $y\in\mf{v}_m$ with $d(x,y)=d(\mf{v}_0,\mf{v}_m)$.
%

Let 
$\eta$ be a standard path from $x$ to $y$. As shown above, $\eta$ is an $n$--local geodesic and it contains points 
$x_1,\dots,x_m$ satisfying $d(x_i,x_{i+1})\geq n$. It follows that the domain of $\eta$ has length $\geq n(m-1)$ and we know 
that $\eta$ is a $(3,2\delta)$--quasi-geodesic. Thus:
\[d(\mf{v}_0,\mf{v}_m)=d(x,y)\geq\tfrac{n(m-1)}{3}-2\delta>0,\] as required.
\end{proof}

\subsection{Systems of switches.}\label{systems of switches sect}
Let $G$ be a one-ended hyperbolic group. We consider a proper cocompact $G$--action on an essential, hyperplane-essential, 
$\delta$--hyperbolic $\CAT$ cube complex $X$. We fix $M\geq 1$ such that, for every $\mf{w}\in\mscr{W}(X)$ and every vertex 
$x\in\mf{w}$, the orbit $G_{\mf{w}}\cdot x$ is $M$--dense in $\mf{w}$, using Lemma~\ref{cocompact hyperplanes}.

Let us denote by $\trans(X)$ the collection of subsets ${\{\mf{u},\mf{v}\}\cu\mscr{W}(X)}$ such that $\mf{u}$ is transverse 
to $\mf{v}$.  Given a subset $\mc{S}\cu\trans(X)$ and $\mf{u}\in\mscr{W}(X)$, let $\mc{S}_{\mf{u}}\cu\mscr{W}(X)$ be the collection 
of all those $\mf{v}$ with $\{\mf{u},\mf{v}\}\in\mc{S}$. 

\begin{defn}
An \emph{$n$--system of switches} is a pair $\mscr{S}=(\mc{S},H)$, where: 
\begin{itemize}
\item $H\lhd G$ is a finite-index subgroup;
\item $\mc{S}\cu\trans(X)$ is an $H$--invariant subset; 
\item for every $\mf{u}\in\mscr{W}(X)$, any two elements of $\mc{S}_{\mf{u}}$ are at distance $\geq n$. 
\end{itemize}
The \emph{support} of $\mscr{S}$ is the set $\supp\mscr{S}:=\{\mf{u}\in\mscr{W}(X)\mid\mc{S}_{\mf{u}}\neq\emptyset\}$. We say that $\mscr{S}$ is \emph{full} if $G\cdot\supp\mscr{S}=\mscr{W}(X)$.
\end{defn}

\begin{lem}\label{constructing systems of switches 2}
For every $n>0$, there exists a full $n$--system of switches.
\end{lem}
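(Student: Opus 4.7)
The plan is to build $\mathcal{S}$ as the $H'$-orbit of the consecutive transverse pairs $\{\mathfrak{u}_i,\mathfrak{u}_{i+1}\}$ in a finite sequence of hyperplanes supplied by Proposition~\ref{constructing systems of switches}, after one additional refinement of the finite-index subgroup. First apply Proposition~\ref{constructing systems of switches} with the given $n$ to obtain $m \geq 4$, hyperplanes $\mathfrak{u}_1, \ldots, \mathfrak{u}_m$, and a finite-index normal subgroup $H \lhd G$ satisfying its four conclusions. The third conclusion only controls distances in $H \cdot \mathfrak{u}_{i-1} \cup H \cdot \mathfrak{u}_{i+1}$ for \emph{interior} indices $1 < i < m$, so the boundary indices $i = 1, m$ are not yet handled. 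To address this, apply Lemma~\ref{girthy cover} to each of $\mathfrak{u}_1$ and $\mathfrak{u}_m$ and intersect the resulting subgroups with $H$ to obtain a further finite-index normal subgroup $H' \lhd G$ with $H' \leq H$ such that any two distinct elements of $H' \cdot \mathfrak{u}_i$ are at distance $\geq n$ for every $i$.

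Next set $\mathcal{S} := H' \cdot \bigl\{\{\mathfrak{u}_i,\mathfrak{u}_{i+1}\} : 1 \leq i \leq m - 1\bigr\}$ and $\mscr{S} := (\mathcal{S}, H')$. By construction $\mathcal{S}$ is $H'$-invariant and contained in $\trans(X)$, and $\supp \mscr{S}$ contains every $\mathfrak{u}_i$; fullness then follows immediately from the first conclusion of Proposition~\ref{constructing systems of switches}. For the distance condition, fix $\mathfrak{u} \in \mscr{W}(X)$: if $\mathfrak{u}$ is not an $H'$-translate of any $\mathfrak{u}_i$, then $\mathcal{S}_{\mathfrak{u}} = \emptyset$, and otherwise $H'$-invariance of $\mathcal{S}$ together with the fact that $H'$ acts isometrically reduces the verification to the case $\mathfrak{u} = \mathfrak{u}_i$.

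The fourth conclusion of Proposition~\ref{constructing systems of switches} (which passes down to $H' \leq H$) forces
$$\mathcal{S}_{\mathfrak{u}_i} \subseteq \bigl(H'\cap\mathrm{Stab}(\mathfrak{u}_i)\bigr)\cdot\mathfrak{u}_{i-1} \,\cup\, \bigl(H'\cap\mathrm{Stab}(\mathfrak{u}_i)\bigr)\cdot\mathfrak{u}_{i+1},$$
with the convention that $\mathfrak{u}_0$ and $\mathfrak{u}_{m+1}$ are empty. For $1 < i < m$ the right-hand side sits inside $H \cdot \mathfrak{u}_{i-1} \cup H \cdot \mathfrak{u}_{i+1}$, where the bound comes from the third conclusion of Proposition~\ref{constructing systems of switches}; for $i \in \{1, m\}$ it sits inside $H' \cdot \mathfrak{u}_2$ or $H' \cdot \mathfrak{u}_{m-1}$, where the bound comes from the defining property of $H'$. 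The only genuine obstacle is this endpoint case, which is precisely what motivates passing from $H$ to $H'$; with that refinement in place, the rest is bookkeeping from Proposition~\ref{constructing systems of switches}.
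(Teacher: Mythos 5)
Your construction of $\mathcal{S}$ and $H'$ is correct, and the verification of the distance condition goes through, so this is a valid proof. It is also essentially the same idea as the paper's: take the $H$--orbit of the consecutive transverse pairs from Proposition~\ref{constructing systems of switches} and observe that $\mc{S}_{\mf{u}_j}=H_{\mf{u}_j}\cdot\mf{u}_{j-1}\sqcup H_{\mf{u}_j}\cdot\mf{u}_{j+1}$ by conclusion (4). The one place you diverge is the extra finite-index refinement $H'$ to handle $j\in\{1,m\}$, and that step is unnecessary: the endpoint cases are already covered by conclusion (3) of Proposition~\ref{constructing systems of switches} because $m\geq 4$. Concretely, $\mc{S}_{\mf{u}_1}\subseteq H\cdot\mf{u}_2$, and taking $i=3$ in conclusion (3) (legal since $1<3<m$) shows any two elements of $H\cdot\mf{u}_2$ are at distance $\geq n$; symmetrically, $\mc{S}_{\mf{u}_m}\subseteq H\cdot\mf{u}_{m-1}$ is handled by $i=m-2$. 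This is the reason $m\geq 4$ appears in the hypotheses of Proposition~\ref{constructing systems of switches}. Your route is harmless, but it costs an extra invocation of separability of halfspace-stabilisers (via Lemma~\ref{girthy cover}), which the paper's proof avoids. One small wording slip: Lemma~\ref{girthy cover} is not applied ``to each of $\mf{u}_1$ and $\mf{u}_m$''; a single application already produces a subgroup controlling the orbit of \emph{every} hyperplane, so there is nothing to intersect.
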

\begin{proof}
Let $\mf{u}_1,\dots,\mf{u}_m$ and $H$ be as provided by Proposition~\ref{constructing systems of switches}; 
passing to a further finite-index subgroup, we can assume that $H$ is normal in $G$.
Let $\mc{S}$ be the union of the sets $H\cdot\{\mf{u}_i,\mf{u}_{i+1}\}$ for $1\leq i<m$. 
Then the pair $\mscr{S}=(\mc{S},H)$ is a full $n$--system of switches.

Indeed, we have $\mf{w}\in\mc{S}_{\mf{u}_j}$ if and only if there exist $h\in H$ and $i$ satisfying ${h\cdot\{\mf{w},\mf{u}_j\}=\{\mf{u}_i,\mf{u}_{i+1}\}}$. Since $H\cdot\mf{u}_l\neq H\cdot\mf{u}_{l'}$ whenever $l\neq l'$, we must have $j\in\{i,i+1\}$ and $h\mf{u}_j=\mf{u}_j$. It follows that $\mc{S}_{\mf{u}_j}=H_{\mf{u}_j}\cdot\mf{u}_{j-1}\sqcup H_{\mf{u}_j}\cdot\mf{u}_{j+1}$, any two elements of which are at distance $\geq n$.
\end{proof}

We write $U(\mscr{S})\cu X$ for the union of all hyperplanes in $\supp\mscr{S}$, and $U_{\pitchfork}(\mscr{S})\cu X$ for the 
union of all intersections $\mf{u}\cap\mf{v}$ with $\{\mf{u},\mf{v}\}\in\mc{S}$. We have: 
\[U(\mscr{S})=U_{\pitchfork}(\mscr{S})\sqcup(U(\mscr{S})-U_{\pitchfork}(\mscr{S})),\] 
where every point of $U(\mscr{S})-U_{\pitchfork}(\mscr{S})$ lies in a unique hyperplane of $\supp\mscr{S}$, while points of $U_{\pitchfork}$ each lie in two distinct elements of $\supp\mscr{S}$. 

We will refer to $U_{\pitchfork}(\mscr{S})$ and $U(\mscr{S})-U_{\pitchfork}(\mscr{S})$, respectively, as the \emph{singular} and \emph{regular} part of $U(\mscr{S})$ (and to their points as \emph{singular} and \emph{regular points}). Note that every vertex of each hyperplane in $\supp\mscr{S}$ is a regular point.

We denote by $\comp\mscr{S}$ the set of all connected components of the regular part of $U(\mscr{S})$. Each element of $\comp\mscr{S}$
is a connected component of a set of the form $\mf{w}- \bigcup_{\mf{v}\in\mc{S}_{\mf{w}}}\mf{v}$ with $\mf{w}\in\supp\mscr{S}$. Instead, note that connected components of the singular part $U_{\pitchfork}(\mscr{S})$ are in one-to-one correspondence with elements of $\mc{S}$. For every regular point $x\in U(\mscr{S})$, we write $[x]$ for the only element of $\comp\mscr{S}$ that 
contains $x$. 

To each $n$--system of switches $\mscr{S}=(\mc{S},H)$ we can associate a bipartite graph $\mc{G}(\mscr{S})$ equipped with a 
cocompact $H$--action.  The vertex set is the disjoint union $\mc{S}\sqcup\comp\mscr{S}$. 
We join each vertex 
$\{\mf{u},\mf{v}\}\in\mc{S}$ to the four elements of $\comp\mscr{S}$ that contain $\mf{u}\cap\mf{v}$ in their closure; two 
are contained in $\mf{u}$ and two in $\mf{v}$. We call vertices of $\mc{G}(\mscr{S})$ \emph{regular} if they 
originate from 
elements of $\comp\mscr{S}$ and \emph{singular} if they originate from elements of $\mc{S}$.

All singular vertices have degree $4$ in $\mc{G}(\mscr{S})$, while regular vertices can have infinite degree 
in general.

Every hyperplane $\mf{w}\in\supp\mscr{S}$ determines a subgraph $\mc{G}(\mf{w})\cu\mc{G}(\mscr{S})$ spanned by 
the elements of $\comp\mscr{S}$ that are contained in $\mf{w}$. Note that $\mc{G}(\mf{w})$ is naturally isomorphic to the 
barycentric subdivision of the restriction quotient of $X$ corresponding to the set 
$\mc{S}_{\mf{w}}\cu\mscr{W}(X)$.  In particular, since the definition of a system of switches 
means that $\mc{S}_{\mf{w}}$ consists of pairwise disjoint hyperplanes, the graph $\mc{G}(\mf{w})$ is a tree.

\begin{ex}
It can be helpful to have the following special case in mind in this subsection. Let $G$ be the fundamental group of a closed, orientable surface $\Sigma$. Let $X$ be a $2$--dimensional cubulation constructed by applying Sageev's construction to a suitable finite filling collection of curves $\mc{F}$ on $\Sigma$.

A choice of a system of switches $\mscr{S}$ corresponds to a choice of a finite cover $\Sigma'\ra\Sigma$ and a set of intersections between the lifts of $\mc{F}$ to $\Sigma'$. We will perform surgery on these intersections and obtain a new family of curves in $\Sigma'$ (sometimes, one of the new curves will fill $\Sigma'$). Here, the main difference from the general case is that regular vertices of $\mc{G}(\mscr{S})$ always have degree $2$.
\end{ex}

\begin{defn}
We say that a connected subgraph $A\cu\mc{G}(\mscr{S})$ is:
\begin{enumerate}
\item \emph{two-sided} if every singular vertex that lies in $A$ has degree $2$ in $A$;
\item \emph{star-complete} if, whenever a regular vertex lies in $A$, its star in $\mc{G}(\mscr{S})$ is also 
contained in $A$.
\end{enumerate}
\end{defn}

Given a connected subgraph $A\cu\mc{G}(\mscr{S})$, let $U(A)\cu U(\mscr{S})$ be obtained by taking the union of the closures 
of the elements of $\comp\mscr{S}$ that lie in $A$.  

We say that a subspace $Y\subset X$ is \emph{locally codimension-1} if for all $y\in Y$, there is a 
neighbourhood $N$ of $y$ in $X$ equipped with a homeomorphism $(Y\cap N)\times[-\frac12,\frac12]\to N$ 
restricting to the inclusion $Y\cap N\to N$ on $(Y\cap N)\times\{0\}$.

\begin{prop}\label{first properties}
There exists $K=K(\delta)$ such that all the following hold for $n>8\delta$ and any $n$--system of switches $\mscr{S}=(\mc{S},H)$.
\begin{enumerate}
\item The graph $\mc{G}(\mscr{S})$ is a forest.
\item Consider $\mf{u},\mf{v}\in\supp\mscr{S}$ such that $\mc{G}(\mf{u})$ and $\mc{G}(\mf{v})$ are contained in 
the same connected component of $\mc{G}(\mscr{S})$. Then $\mf{u}$ and $\mf{v}$ are transverse if and only if 
$\{\mf{u},\mf{v}\}\in\mc{S}$.
\item For every connected subgraph $A\cu\mc{G}(\mscr{S})$, the subset $U(A)\cu U(\mscr{S})$ is connected and 
$K$--quasiconvex.
\item A connected subgraph $A\cu\mc{G}(\mscr{S})$ is star-complete and two-sided if and only if the subset 
$U(A)\cu X$ is locally codimension-1.
\item The orbit $G\cdot U(A)$ is locally finite in $X$ if and only if no regular vertex of $\mc{G}(\mscr{S})$ 
lies in infinitely many pairwise-distinct $H$--translates of $A$.
\end{enumerate} 
\end{prop}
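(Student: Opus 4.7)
\medskip\noindent
\textbf{Proof plan.} The conceptual core is parts~(1) and~(3), both driven by Lemma~\ref{sequences of hyperplanes} applied to chains of hyperplanes extracted from paths in $\mc{G}(\mscr{S})$. Throughout, note that for each $\mf{w}\in\supp\mscr{S}$ the subgraph $\mc{G}(\mf{w})$ is the barycentric subdivision of the restriction quotient of $X$ to $\mc{S}_{\mf{w}}$; since any two elements of $\mc{S}_{\mf{w}}$ are transverse to $\mf{w}$ and at $\ell_1$--distance $\geq n$ from one another, they induce pairwise-disjoint hyperplanes of $\mf{w}$, so this restriction quotient is a tree, and hence $\mc{G}(\mf{w})$ is a tree.

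For part~(1), I would argue by contradiction: suppose $\mc{G}(\mscr{S})$ has an embedded cycle. To each type-1 vertex $C_i$ along this cycle I associate the hyperplane $\mf{w}_i\in\supp\mscr{S}$ containing~$C_i$. Consecutive entries $\mf{w}_i,\mf{w}_{i+1}$ are either equal (the intervening type-2 vertex being a ``within-$\mf{w}_i$'' vertex of the tree $\mc{G}(\mf{w}_i)$) or distinct (in which case the intervening type-2 vertex must equal $\{\mf{w}_i,\mf{w}_{i+1}\}\in\mc{S}$). The cycle cannot lie inside a single $\mc{G}(\mf{w})$, so after deleting consecutive repetitions we obtain a cyclic sequence $\mf{v}_0,\mf{v}_1,\dots,\mf{v}_m=\mf{v}_0$ of distinct consecutive hyperplanes with $\{\mf{v}_j,\mf{v}_{j+1}\}\in\mc{S}$ and (because $\mf{v}_{j-1},\mf{v}_{j+1}\in\mc{S}_{\mf{v}_j}$) $d(\mf{v}_{j-1},\mf{v}_{j+1})\geq n$. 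The case $m=2$ is ruled out combinatorially: the two transition type-2 vertices would both equal $\{\mf{v}_0,\mf{v}_1\}$, contradicting simplicity. For $m\geq 3$, Lemma~\ref{sequences of hyperplanes}(2) applied to the linearised chain yields $d(\mf{v}_0,\mf{v}_m)>0$, contradicting $\mf{v}_m=\mf{v}_0$. Part~(2) follows in the same spirit: if $\mf{u},\mf{v}\in\supp\mscr{S}$ are transverse and $\{\mf{u},\mf{v}\}\notin\mc{S}$ but $\mc{G}(\mf{u})$ and $\mc{G}(\mf{v})$ are in the same tree-component of $\mc{G}(\mscr{S})$ (by~(1), it is a forest), the unique path between them induces a transition chain of length $m\geq 2$; Lemma~\ref{sequences of hyperplanes}(2) forces $d(\mf{u},\mf{v})>0$, contradicting transversality.

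For part~(3), connectedness of $U(A)$ is immediate from connectedness of $A$. For the $K$--quasiconvexity, given vertices $x,y\in U(A)$ in closures of components $C_x,C_y\in A$, I take the unique path in the tree $A$ between $C_x$ and $C_y$, extract as above the transition sequence $\mf{v}_0,\dots,\mf{v}_m$ (with $x\in\mf{v}_0$, $y\in\mf{v}_m$), and build a standard path $\eta$ from $x$ to $y$ through gate projections. Lemma~\ref{sequences of hyperplanes}(1) then guarantees that every $\ell_1$--geodesic from $x$ to $y$ lies within Hausdorff distance $K=K(\delta)$ of $\eta$. The key point---and the technical heart of the proof---is that $\eta$ itself lies in $U(A)$. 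Each segment of $\eta$ within $\mf{v}_i$ is an $\ell_1$--geodesic between two points in the convex subcomplex $\mf{v}_i$; the components it traverses form a walk in the tree $\mc{G}(\mf{v}_i)$, which, because geodesics cross each hyperplane at most once, is a simple path. By the simplicity of our path in $\mc{G}(\mscr{S})$, the transition type-2 vertices $\{\mf{v}_{i-1},\mf{v}_i\}$ and $\{\mf{v}_i,\mf{v}_{i+1}\}$ are not revisited inside $\mf{v}_i$, so the unique tree-path from the entry component to the exit component in $\mc{G}(\mf{v}_i)$ is exactly the $\mf{v}_i$--segment of the path in $A$; consequently the geodesic in $\mf{v}_i$ visits only components of $A$, hence lies in $U(A)$.

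Part~(4) is a local analysis: at any interior point of a component $C\in A$, $U(A)$ locally coincides with a hyperplane and is automatically two-sided; at a point on a codimension--2 intersection $\mf{u}\cap\mf{v}$ with $\{\mf{u},\mf{v}\}\in\mc{S}$ the local picture is $\R^2\times Y$ with the four adjacent components showing as the four half-axes in~$\R^2$. Local two-sidedness is equivalent to $U(A)$ locally realising a ``line'' or ``L'' configuration of exactly two half-axes. A third half-axis (a ``T'') produces three local complementary components; all four (a ``$+$'') produces four; and a lone half-axis (an exposed end, from a type-1 vertex of $A$ lacking the adjacent type-2 vertex) yields one. These failures are ruled out, respectively, by two-sidedness of the type-2 vertex and by star-completeness. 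Part~(5) uses that $[G:H]<\infty$, which reduces local finiteness of $G\cdot U(A)$ to that of $H\cdot U(A)$; then a point in the interior of a component $C\in\comp\mscr{S}$ has a small neighbourhood meeting $U(hA)$ iff $C\in hA$, so local finiteness at such points is equivalent to each type-1 vertex lying in only finitely many distinct $H$--translates of $A$ (and the check at codimension--2 points follows from this via the finiteness of the neighbours of each type-2 vertex). The main obstacle will be the delicate argument in~(3) that $\eta\subseteq U(A)$, which crucially uses tree-uniqueness in $\mc{G}(\mf{v}_i)$, simplicity of the path in $\mc{G}(\mscr{S})$, and the convexity of $\mf{v}_i$ in $X$.
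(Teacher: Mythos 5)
Your proposal follows the same route as the paper's, resting on Lemma~\ref{sequences of hyperplanes} applied to hyperplane sequences extracted from paths in $\mc{G}(\mscr{S})$. You fill in two points where the paper is terse: you explicitly collapse consecutive repetitions in the hyperplane sequence before invoking the lemma (without this, the paper's assertion that ``$\mf{v}_i$ is transverse to $\mf{v}_{i+1}$'' can fail when a path re-enters the same $\mc{G}(\mf{w})$), and — more substantively — you verify that the standard path $\eta$ lies in $U(A)$ rather than merely in $U(\mscr{S})$. The latter is genuinely required for part~(3): observing that geodesics are $K$--close to a standard path contained in $U(\mscr{S})$ does not by itself yield quasiconvexity of the subset $U(A)$, and your argument via uniqueness of tree-paths in $\mc{G}(\mf{v}_i)$ together with embeddedness of the path in the tree $A$ is the right way to close this. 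Your treatments of parts~(4) and~(5) match the paper's (the paper calls (4) ``obvious'' and gives for (5) the same $[G:H]<\infty$ reduction and the equivalence via the vertex $[z]$).
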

\begin{proof}
Consider an immersed path $\g\cu\mc{G}(\mscr{S})$ between two regular vertices $[x],[y]\in\comp\mscr{S}$. Let 
$\mf{v}_0,\dots,\mf{v}_k\in\supp\mscr{S}$ be the hyperplanes containing the elements of $\g\cap\comp\mscr{S}$, 
in the same order as they appear moving from $[x]$ to $[y]$ along $\g$. In particular, $x\in [x]\cu\mf{v}_0$, 
$y\in [y]\cu\mf{v}_k$, the hyperplane $\mf{v}_i$ is transverse to $\mf{v}_{i+1}$ for each $0\leq i\leq k-1$ and 
we have $d(\mf{v}_{i-1},\mf{v}_{i+1})\geq n$ for every $0<i<k$.

It follows that $\mf{v}_0,\dots,\mf{v}_k$ satisfy the hypotheses of Lemma~\ref{sequences of hyperplanes}. Denoting by $\tilde\gamma\cu U(\mscr{S})\cu X$ any standard path joining $x$ and $y$, any geodesic joining $x$ and $y$ in $X$ is at Hausdorff distance $\leq K$ from $\tilde\g$. This shows part~(3), while parts~(1) and~(2) follow from part~(2) of Lemma~\ref{sequences of hyperplanes}. Part~(4) is obvious.

Finally, we prove part~(5). Observe that $G\cdot U(A)$ is locally finite if and only if $H\cdot U(A)$ is. This fails if and only if a point $z\in U(\mscr{S})$ lies in infinitely many pairwise-distinct $H$--translates of $U(A)$, say $h_iU(A)=U(h_iA)$. Without loss of generality, $z$ is a vertex of a hyperplane of $X$. The above then happens if and only if the vertex $[z]\in\mc{G}(\mscr{S})$ lies in the pairwise-distinct subgraphs $h_iA$. 
\end{proof}

\begin{defn}
A \emph{crooked hyperplane} is a connected, two-sided, star-com\-plete subtree $\G\cu\mc{G}(\mscr{S})$ such 
that no regular vertex of $\mc{G}(\mscr{S})$ lies in infinitely many pairwise-distinct $H$--translates of $\G$.
\end{defn}

Given a crooked hyperplane $\G\cu\mc{G}(\mscr{S})$, we denote by $G_{\G}$ the $G$--stabiliser of the subset\footnote{The 
$G$--stabiliser of $\G$ itself would not make sense, as, although $H$ acts on $\mc{G}(\mscr{S})$, the group $G$ does not.} $U(\G)\cu X$. Part~(5) of 
Proposition~\ref{first properties} and Lemma~2.3 in \cite{Hagen-Susse} show that the action $G_{\G}\acts U(\G)$ is proper and 
cocompact. By part~(3), the subgroup $G_{\G}\leq G$ is quasiconvex.  Moreover, from part~(4) and Mayer--Vietoris, we see 
that $X- U(\G)$ has exactly two connected components. We write $G_{\G}^0\leq G_{\G}$ for the subgroup (of index at 
most two) that leaves invariant both connected components of $X- U(\G)$.

\begin{rmk}\label{boundaries of crooked hyperplanes}
For every crooked hyperplane $\G\cu\mc{G}(\mscr{S})$, there is a natural map $\iota\colon\partial_{\infty}\G\ra\partial_{\infty}U(\G)$ taking the endpoint of a ray $\g$ in the tree $\G$ to the endpoint at infinity of any standard path $\tilde\g\cu U(\G)$. The map $\iota$ is a homeomorphism onto its image, and it satisfies:
\[\partial_{\infty}U(\G)=\iota(\partial_{\infty}\G)\sqcup\bigcup_{c\in\G\cap\comp\mscr{S}}\partial_{\infty}c.\]
Thus, $\partial_{\infty}U(\G)$ is nonempty even when each element of $\comp\mscr{S}$ is bounded.
\end{rmk}

\begin{prop}\label{existence}
Let $\mscr{S}$ be an $n$--system of switches. Every compact, two-sided subtree $A\cu\mc{G}(\mscr{S})$ is 
contained in a crooked hyperplane.
\end{prop}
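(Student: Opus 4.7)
The plan is to construct $\Gamma$ inductively as an ascending union $A = \Gamma_0 \subseteq \Gamma_1 \subseteq \cdots$ of finite, connected, two-sided subtrees of $\mc{G}(\mscr{S})$. At stage $i \to i+1$, choose a type-1 vertex $v \in \Gamma_i$ whose star in $\mc{G}(\mscr{S})$ is not fully contained in $\Gamma_i$, together with a missing type-2 neighbour $\tau = \{\mf{u}, \mf{v}\}$ of $v$ (say $v \subseteq \mf{u}$); then adjoin $\tau$ together with one of the two type-1 vertices $v' \subseteq \mf{v}$ adjacent to $\tau$. Since $\mc{G}(\mscr{S})$ is a forest by Proposition~\ref{first properties}(1), $\tau$ cannot already have had two neighbours in $\Gamma_i$, so $\Gamma_{i+1}$ remains a subtree in which $\tau$ has degree exactly~$2$. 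Iterating over every incomplete pair $(v,\tau)$ and setting $\Gamma = \bigcup_i \Gamma_i$ yields a connected, star-complete subtree of $\mc{G}(\mscr{S})$.

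Two-sidedness in the limit requires a compatible choice of each $v'$: we must avoid later forcing a third edge onto any type-2 vertex already in $\Gamma_i$ when the star of $v'$ is eventually completed. The forest structure rules out both of the two possible choices simultaneously being obstructed — the ``conflicting'' corners at any existing type-2 vertex sit on a determined side and cannot coexist in $\Gamma$ without producing a cycle in $\mc{G}(\mscr{S})$ — so there is always a safe selection of $v'$, and this yields two-sidedness in the union. Together with connectedness, the subtree property, and star-completeness, this handles three of the four conditions in the definition of a crooked hyperplane.

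The main obstacle is the final condition, that no type-1 vertex of $\mc{G}(\mscr{S})$ lies in infinitely many pairwise-distinct $H$-translates of $\Gamma$. To secure it, we first replace $H$ by a finite-index normal subgroup via Lemma~\ref{girthy cover}, preserving the same $\mc{S}$ and hence the $n$-system of switches structure, but separating distinct translates of each hyperplane $\mf{w}_j$ by an arbitrarily large distance in $X$. The consecutive type-1 vertices of $\Gamma$ along any geodesic path in $\Gamma$ lie in hyperplanes whose pairwise $X$-distances grow linearly with combinatorial distance in $\Gamma$, by Lemma~\ref{sequences of hyperplanes}(2). Coupled with quasiconvexity of hyperplane-stabilisers and Lemma~\ref{intersection of cosets}, this bounds how many $H$-translates of $\Gamma$ can simultaneously pass through a fixed type-1 vertex, which is exactly the condition needed. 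The hardest part of the proof is to verify that the combinatorial ``safe-choice'' procedure for $v'$ can be made compatibly throughout the construction while respecting the local-finiteness control — essentially a bookkeeping argument balancing the forest geometry of $\mc{G}(\mscr{S})$ against the $n$-separation in $X$.
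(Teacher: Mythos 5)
Your inductive star-completion procedure does produce a connected, star-complete, two-sided subtree $\Gamma$ containing $A$: in fact the forest structure makes \emph{every} choice of $v'$ automatically safe, not merely ``at least one''. Indeed, once a type-2 vertex $\tau$ is in $\Gamma_i$ with exactly two incident edges, adding any later edge of $\mc{G}(\mscr{S})$ at $\tau$ would create a cycle using the unique path in $\Gamma_i$ between its endpoints; the same forest argument shows $v'\notin\Gamma_i$. So your worry about ``compatibly'' choosing sides is unnecessary, though the conclusion of that paragraph is correct.

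The genuine gap is the last condition --- that no type-1 vertex lies in infinitely many pairwise-distinct $H$-translates of $\Gamma$ --- and what you write does not establish it. Passing to a deeper normal subgroup $H'\lhd H$ via Lemma~\ref{girthy cover} only separates translates of a given hyperplane; it says nothing about how many translates of your ad hoc tree $\Gamma$ pass through a fixed type-1 vertex $[y]$. Lemma~\ref{sequences of hyperplanes}(2) controls the $\ell^1$-divergence of hyperplanes along a branch of $\Gamma$, and Lemma~\ref{intersection of cosets} controls when finitely many cosets pairwise coarsely intersect; neither bounds the cardinality of $\{h\in H : h^{-1}[y]\in\Gamma\}$ modulo $\mathrm{Stab}_H(\Gamma)$. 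Since $\Gamma$ is an infinite subtree built by a sequence of independent choices with no equivariance, it can perfectly well contain infinitely many type-1 vertices from the $H$-orbit of $[y]$, each contributing a distinct translate. You concede at the end that the ``hardest part'' of the proof is exactly this bookkeeping, which is to say that the main difficulty has not been solved.

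The paper resolves this by a different mechanism. Using separability of vertex- and edge-stabilisers for $H\acts\mc{G}(\mscr{S})$ (Corollary~7.9 of \cite{Haglund-Wise-GAFA}), one finds a finite-index $L\lhd H$ such that $A$ embeds in the finite quotient $\overline{\mc{G}}=L\backslash\mc{G}(\mscr{S})$ and every type-2 vertex of $\mc{G}(\mscr{S})$ survives as a degree-4 vertex in $\overline{\mc{G}}$. One then chooses, in the \emph{finite} quotient, a partition of the four edges at each type-2 vertex into two pairs (compatibly with $\pi(A)$), and lifts those partitions $L$-equivariantly to $\mc{G}(\mscr{S})$. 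Splitting each type-2 vertex along its partition yields a graph $\mc{G}'$ with an induced $L$-action, and the crooked hyperplane is a connected component of $\mc{G}'$. Because the components of $\mc{G}'$ partition the type-1 vertices, each type-1 vertex lies in at most one $L$-translate of $\G$, and hence in finitely many $H$-translates since $[H:L]<\infty$. It is precisely this built-in $L$-equivariance that delivers the local-finiteness condition, and it is the ingredient your inductive argument lacks.
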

\begin{proof}
Let us write $\mc{G}=\mc{G}(\mscr{S})$ for simplicity and let $\mc{G}_2\cu\mc{G}$ be the subset of singular 
vertices; recall that the action $H\acts\mc{G}$ is cocompact. Since the action $H\acts\mc{G}$ has separable 
vertex- and edge-stabilisers by Corollary~7.9 in \cite{Haglund-Wise-GAFA}, there exists a finite-index subgroup 
$L\lhd H$ such that $A$ projects injectively to the quotient $\overline{\mc{G}}=L\backslash\mc{G}$ and such 
that every element of $\mc{G}_2$ projects to a degree-4 vertex of $\overline{\mc{G}}$. 
Let $\pi\colon\mc{G}\ra\overline{\mc{G}}$ denote the quotient projection.

For each $v\in\pi(\mc{G}_2)$, we choose a partition of the four edges incident to $v$ into two pairs. We do so ensuring that, if $v\in\pi(A)$, one element of the partition consists precisely of the two edges lying in $\pi(A)$. We now lift these partitions to $\mc{G}$. Let $\mc{G}'$ be the graph obtained from $\mc{G}$ by replacing every vertex in $\mc{G}_2$ with two vertices of degree $2$, according to the chosen partitions. The graph $\mc{G}'$ naturally comes equipped with an immersion $f\colon\mc{G}'\ra\mc{G}$.

By construction, there exists a connected component $\G\cu\mc{G}'$ such that $A\cu f(\G)$. It is clear that 
$f(\G)$ is a connected, two-sided, star-complete subtree of $\mc{G}$. Each regular vertex of $\mc{G}$ lies in 
at most one $L$--translate of $f(\G)$. It follows that every regular vertex of $\mc{G}$ lies in finitely many 
$H$--translates of $f(\G)$. This shows that $f(\G)$ is a crooked hyperplane, concluding the proof.
\end{proof}

If $\G\cu\mc{G}(\mscr{S})$ is a crooked hyperplane, we denote by $\mc{T}_{\G}$ the connected component of $\mc{G}(\mscr{S})$ that contains $\G$.

Let $D$ be as in Lemma~\ref{small projection}, let $K$ be as in Proposition~\ref{first properties} and let $M$ be the constant chosen at the beginning of this subsection.

\begin{prop}\label{skewering new}
Consider $n>2(M+D+2K)$ and a full $n$--system of switches $\mscr{S}=(\mc{S},H)$.  For every non-constant geodesic $\g\cu X$, there exists a regular vertex $v\in\mc{G}(\mscr{S})$ with the following property. For every crooked hyperplane 
$\G\cu\mc{G}(\mscr{S})$ intersecting the orbit $H\cdot v$, there exists $g\in G$ such that $g\g$ intersects $U(\mc{T}_{\G})$ in a single point, which lies in $U(\G)$.
\end{prop}

Before proving Proposition~\ref{skewering new}, we need to obtain a couple of lemmas. 

\begin{lem}\label{most stuff is far}
Let $\mc{T}$ be a connected component of $\mc{G}(\mscr{S})$. Consider a regular point $p\in U(\mc{T})$ and let $\mf{w}$ 
be the hyperplane containing $[p]\in\comp\mscr{S}$. Then, for every $x\in U(\mc{T})-[p]$, we have 
$d(p,\pi_{\mf{w}}(x))\geq d(p,\mf{w}-[p])-2K$.
\end{lem}
\begin{proof}
Let $\tilde\g$ be a standard path from $p$ to $x$ and let $\alpha$ be an $\ell_1$ geodesic joining $p$ and $x$.  Let 
$y\in\tilde\g$ be the first point that does not lie in $[p]$ and let $y'\in\alpha$ be a point that is closest to $y$; in 
particular, we have $d(y,y')\leq K$. It follows that:
\[(x\cdot y)_p\geq (x\cdot y')_p-K=d(p,y')-K\geq d(p,y)-2K\geq d(p,\mf{w}-[p])-2K.\]
Since $p,y\in\mf{w}$, every element of $\mscr{W}(p|x,y)$ is transverse to $\mf{w}$;  hence we have 
$\mscr{W}(p|x,y)\cu\mscr{W}(p|\pi_{\mf{w}}(x))$. As $(x\cdot y)_p=\#\mscr{W}(p|x,y)$, this concludes the proof.
\end{proof}

\begin{lem}\label{large components}
Let a hyperplane $\mf{w}\in\supp\mscr{S}$ and a vertex $x\in\mf{w}$ be given.  If $n>2M$, there exists $g\in G_{\mf{w}}$ such 
that $d(gx,\mf{w}-[gx])\geq\tfrac{n}{2}-M$.
\end{lem}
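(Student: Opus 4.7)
Since $\mf w\in\supp\mscr S$, the set $\mc S_{\mf w}$ is nonempty; fix $\mf v_0\in\mc S_{\mf w}$, so that $\mf w\cap\mf v_0$ is a hyperplane of the convex subcomplex $\mf w$. The connected component $[gx]\in\comp\mscr S$ is cut out of $\mf w$ by the hyperplanes $\mf w\cap\mf v$ with $\mf v\in\mc S_{\mf w}$. Any point of $\mf w-[gx]$ either lies on one of these hyperplanes or is separated from $gx$ by one, and conversely each $\mf w\cap\mf v$ is contained in $\mf w-[gx]$, so
\[
d(gx,\mf w-[gx])=\min_{\mf v\in\mc S_{\mf w}}d(gx,\mf w\cap\mf v).
\]
It thus suffices to exhibit $g\in G_{\mf w}$ with $d(gx,\mf w\cap\mf v)\geq\tfrac n2-M$ for every $\mf v\in\mc S_{\mf w}$.

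The plan is to place $gx$ at distance about $n/2$ from $\mf w\cap\mf v_0$, using $M$--density. By Lemma~\ref{cocompact hyperplanes} combined with hyperplane-essentiality, the action $G_{\mf w}\acts\mf w$ is cocompact and essential, so both halfspaces of $\mf w$ bounded by $\mf w\cap\mf v_0$ are unbounded. Traveling along a geodesic in $\mf w$ that crosses $\mf w\cap\mf v_0$ and escapes into one such halfspace, I can pick a point $y\in\mf w$ with $d(y,\mf w\cap\mf v_0)=\tfrac n2$. The $M$--density of $G_{\mf w}\cdot x$ in $\mf w$ then furnishes some $g\in G_{\mf w}$ with $d(gx,y)\leq M$, whence $d(gx,\mf w\cap\mf v_0)\leq\tfrac n2+M$ by the triangle inequality.

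For any other hyperplane in $\mc S_{\mf w}$, the plan is to invoke the defining separation property of an $n$--system of switches: any two distinct elements of $\mc S_{\mf w}$ are at distance $\geq n$ in $X$. Because $\mf w$ is convex, distances in $\mf w$ agree with those in $X$, so $d(\mf w\cap\mf v,\mf w\cap\mf v_0)\geq d(\mf v,\mf v_0)\geq n$ for every $\mf v\in\mc S_{\mf w}-\{\mf v_0\}$. A further triangle inequality then yields
\[
d(gx,\mf w\cap\mf v)\geq d(\mf w\cap\mf v,\mf w\cap\mf v_0)-d(gx,\mf w\cap\mf v_0)\geq n-\bigl(\tfrac n2+M\bigr)=\tfrac n2-M,
\]
completing the proof. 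The only mildly subtle step is the reduction in the first paragraph; once that is in place, essentiality and the triangle inequality do the rest, and the hypothesis $n>2M$ serves precisely to guarantee that the resulting bound $\tfrac n2-M$ is positive.
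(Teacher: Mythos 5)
Your argument is correct and follows the same strategy as the paper: find a vertex of $\mf w$ at distance roughly $n/2$ from every $\mf w\cap\mf v$ with $\mf v\in\mc S_{\mf w}$, then invoke the $M$-density of $G_{\mf w}\cdot x$. Your reduction formula $d(gx,\mf w-[gx])=\min_{\mf v\in\mc S_{\mf w}}d(gx,\mf w\cap\mf v)$ is a tidy way to set this up. The paper instead locates a point $q$ deep inside the component $[x]$ itself, using that $[x]$ is bounded by at least two hyperplanes of $\mf w$ at mutual distance $\geq n$, and then checks $[gx]=[q]$ once $x$ has been moved within $M$ of $q$. Your version sidesteps that bookkeeping about which component the auxiliary points lie in, at the modest cost of introducing a second triangle inequality.

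There is one small gap. After announcing that it suffices to show $d(gx,\mf w\cap\mf v)\geq\tfrac n2-M$ for \emph{every} $\mf v\in\mc S_{\mf w}$, you verify this only for $\mf v\neq\mf v_0$. The case $\mf v=\mf v_0$ must also be addressed; it follows from the other direction of the same triangle inequality, namely $d(gx,\mf w\cap\mf v_0)\geq d(y,\mf w\cap\mf v_0)-d(gx,y)\geq\tfrac n2-M$. Note that this direction requires $d(y,\mf w\cap\mf v_0)\geq\tfrac n2$, while the upper bound $d(gx,\mf w\cap\mf v_0)\leq\tfrac n2+M$ that you did record requires $d(y,\mf w\cap\mf v_0)\leq\tfrac n2$; so your choice of $y$ at distance exactly $\tfrac n2$ is doing double duty (and in practice you should pick $y$ to be a vertex of $\mf w$, losing at most an additive constant in the estimate, which is harmless for the lemma's use downstream).
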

\begin{proof}
Since $\mc{S}$ is $H_{\mf{w}}$--invariant and the action $H_{\mf{w}}\acts\mf{w}$ is essential, 
the component $[x]\cu\mf{w}$ must have at least two boundary components. Given that any two 
boundary components of $[x]$ are at distance $\geq n$ from each other and $[x]$ is connected, there exists a vertex 
$q\in\mf{w}$ such that $[q]=[x]$ and $d(q,\mf{w}-[q])\geq\tfrac{n}{2}$. By the definition of $M$, there exists an 
element $g\in G_{\mf{w}}$ such that $d(gx,q)\leq M<\tfrac{n}{2}$. Hence $[gx]=[q]$ and we obtain $d(gx,\mf{w}-[gx])\geq 
d(q,\mf{w}-[q])-M\geq\tfrac{n}{2}-M$.
\end{proof}

\begin{proof}[Proof of Proposition~\ref{skewering new}]
By Lemma~\ref{small projection} there exists a hyperplane $\mf{w}\in\mscr{W}(\g)$ for which $\diam\pi_{\mf{w}}(\g)\leq D$. 
Since $\mscr{S}$ is full, we can replace $\g$ with a $G$--translate and assume that $\mf{w}\in\supp\mscr{S}$. 
Let $x$ be the point of intersection between $\g$ and $\mf{w}$. Let $\mc{T}\cu\mc{G}(\mscr{S})$ be 
the connected component that contains the vertex $[x]\in\mc{G}(\mscr{S})$.
Up to replacing $\g$ with a $G_{\mf{w}}$--translate, 
Lemma~\ref{large components} allows us to assume that $d(x,\mf{w}-[x])>2K+D$.  
Then, Lemma~\ref{most stuff is far} gives:
\[d(x,\pi_{\mf{w}}(U(\mc{T})-[x]))>D\geq\diam\pi_{\mf{w}}(\g).\] 
We conclude that $\g$ and $U(\mc{T})-[x]$ are disjoint.

If $\G\cu\mc{G}(\mscr{S})$ is a crooked hyperplane containing $[x]$, we have $\mc{T}_{\G}=\mc{T}$ and:
\[\g\cap U(\mc{T}_{\G})=\g\cap[x]=\{x\}.\] 

Finally, set $v=[x]$. Let $\G'$ be a crooked hyperplane containing $hv$ for some $h\in H$. Then $h\g$ intersects $U(\mc{T}_{\G'})$ in the single point $hx\in U(\G')$.
\end{proof}

We say that $g\in G$ \emph{skewers} a crooked hyperplane $\G$ if we have $g\overline C\subseteq C$ for one of 
the two connected components $C\cu X- U(\G)$, where $\overline C$ denotes the closure. In this case, we have 
$d(gC,U(\G))>0$, by cocompactness of $G_{\G}^0\acts U(\G)$. We remark that, if $\G$ is skewered by an element 
of $G$, the subgroup $G_{\G}^0\leq G$ is codimension-one.

\begin{cor}\label{skewering cor}
Consider $n>2(M+D+2K)$, a full $n$--system of switches $\mscr{S}=(\mc{S},H)$ and a crooked hyperplane 
$\G\cu\mc{G}(\mscr{S})$ projecting surjectively to $H\backslash\mc{G}(\mscr{S})$. Then:
\begin{enumerate}
\item every non-constant geodesic $\g\cu X$ has a $G$--translate intersecting $U(\G)$ is a single point;
\item for every infinite-order element $g\in G$, a $G$--conjugate of a power of $g$ skewers $U(\G)$.
\end{enumerate}
\end{cor}
\begin{proof}
Part~(1) is immediate from Proposition~\ref{skewering new} and the fact that $U(\G)$ is contained in $U(\mc{T}_{\G})$. 

Regarding part~(2), we can replace $g$ with a power and assume that $g$ admits an axis $\g\cu X$ \cite{Haglund}. Again by Proposition~\ref{skewering new}, we can replace $g$ with a conjugate and assume that $\g$ intersects $U(\mc{T}_{\G})$ in a single point $x\in U(\G)$. It follows that one connected component $C_+\cu X- U(\G)$ contains the positive half of $\g-\{x\}$, while 
the other component $C_-\cu X- U(\G)$ contains the negative half. 

Let us pick $m>0$ such that $g^m\in H$. Note that $g^mU(\G)$ and $U(\G)$ are disjoint. Otherwise, we would have:
\[\emptyset\neq g^mU(\G)\cap U(\G)=U(g^m\G\cap\G).\] 
Hence $g^m\mc{T}_{\G}=\mc{T}_{\G}$ and
\[g^mx\in g^mU(\mc{T}_{\G})=U(\mc{T}_{\G}),\] 
contradicting the fact that $\g\cap U(\mc{T}_{\G})=\{x\}$.  

Now, since $g^mU(\G)$ and $U(\G)$ are disjoint, we have $g^mU(\G)\cu C_+$. Note moreover that $g^mC_-\cap C_-\neq\emptyset$, as both sets contain a sub-ray of $\g$. We conclude that $g^m\overline{C_+}\subseteq C_+$, i.e.\ $g^m$ skewers $\G$.
\end{proof}

Part~(2) of Corollary~\ref{skewering cor} and Proposition~\ref{cubulating with abstract hyperplanes} now yield:

\begin{cor}\label{main}
Consider $n>2(M+D+2K)$, a full $n$--system of switches $\mscr{S}=(\mc{S},H)$ and a crooked hyperplane 
$\G\cu\mc{G}(\mscr{S})$ projecting surjectively to $H\backslash\mc{G}(\mscr{S})$. There exists an essential $\CAT$ cube 
complex $X_{\G}$ and a proper cocompact action $G\acts X_{\G}$ with a single orbit of hyperplanes. All hyperplane-stabilisers 
of $G\acts X_{\G}$ are conjugate to $G_{\G}\leq G$.
\end{cor}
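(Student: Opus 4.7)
The plan is to cubulate $G$ using the single $G$-orbit of abstract hyperplanes determined by $\Gamma$, then extract the conclusions from Proposition~\ref{cubulating with abstract hyperplanes}. The first step is to associate to $\Gamma$ an abstract hyperplane in the sense of Definition~\ref{abstract hyperplanes defn}. By part~(3) of Proposition~\ref{first properties}, $U(\Gamma)$ is $K$-quasiconvex in $X$, and by part~(4) together with Mayer--Vietoris (as already noted in the text) $X - U(\Gamma)$ has exactly two connected components $C_+, C_-$; the subgroup $G_\Gamma^0 \le G_\Gamma$ preserves this bipartition and acts properly and cocompactly on $U(\Gamma)$, hence is quasiconvex in $G$. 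Applying Lemma~\ref{connected components in general} to the sets $\widetilde C_\pm = C_\pm \sqcup U(\Gamma)$ yields a partition $\partial_\infty G - \Lambda G_\Gamma^0 = \mathfrak{H}^+ \sqcup \mathfrak{H}^-$ into nonempty open $G_\Gamma^0$-invariant subsets; nonemptiness of each side uses that both components $C_\pm$ are unbounded (which in turn follows from the essentiality of $X$ and Remark~\ref{boundaries of crooked hyperplanes}). This gives an abstract hyperplane $(G_\Gamma^0, \mathfrak{H})$.

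Next I would set $\mathcal{H} := G \cdot (G_\Gamma^0, \mathfrak{H})$, a single $G$-orbit of abstract hyperplanes. By part~(1) of Proposition~\ref{cubulating with abstract hyperplanes}, this produces a cocompact $G$-action on an essential CAT(0) cube complex $X_\Gamma := X(\mathcal{H})$. Since $\mathcal{H}$ is a single orbit, $X_\Gamma$ has exactly one $G$-orbit of hyperplanes. The stabiliser of a hyperplane in the dual cube complex is the full setwise stabiliser of the unordered pair $\{\mathfrak{H}^+, \mathfrak{H}^-\}$, which is precisely $G_\Gamma$; hence every hyperplane-stabiliser is a $G$-conjugate of $G_\Gamma$, as required.

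The main obstacle, and the final step of the proof, is verifying properness via the criterion in part~(2) of Proposition~\ref{cubulating with abstract hyperplanes}: for each infinite-order $g \in G$, the boundary points $g^\pm$ must be separated by some element of $\mathcal{H}$. This is where the surjectivity hypothesis on $\Gamma \to H\backslash \mathcal{G}(\mathscr{S})$ and Proposition~\ref{skewering} combine. Given infinite-order $g \in G$, Proposition~\ref{skewering} provides a type-1 vertex $v \in H\backslash \mathcal{G}(\mathscr{S})$ such that any crooked hyperplane whose projection to $H\backslash \mathcal{G}(\mathscr{S})$ contains $v$ is skewered by a $G$-conjugate of a power of $g$. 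By surjectivity, $v$ lies in the projection of $\Gamma$, so $\Gamma$ itself satisfies the hypothesis; thus there exist $g_0 \in G$ and $N \ne 0$ with $g_0 g^N g_0^{-1}$ skewering $\Gamma$. Translating this back: $g_0 g^N g_0^{-1} \cdot \overline{C_\epsilon} \subsetneq C_\epsilon$ for one side, which on the Gromov boundary forces $(g_0 g g_0^{-1})^+ \in \mathfrak{H}^\epsilon$ and $(g_0 g g_0^{-1})^- \in \mathfrak{H}^{-\epsilon}$, so that $g^+$ and $g^-$ are separated by $g_0^{-1}(G_\Gamma^0, \mathfrak{H}) \in \mathcal{H}$. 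This completes the verification that $G \acts X_\Gamma$ is proper.
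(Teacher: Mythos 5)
Your proposal is correct and follows the same route the paper intends: build the abstract hyperplane $(G_\Gamma^0,\mf{H})$ from $U(\Gamma)$ and the two components of its complement, set $\mc{H}=G\cdot\mf{H}$, apply Proposition~\ref{cubulating with abstract hyperplanes} for cocompactness, essentiality and the single orbit, and feed Proposition~\ref{skewering} (via the surjectivity hypothesis on the projection of $\Gamma$) into part~(2) of Proposition~\ref{cubulating with abstract hyperplanes} to get properness. One small inaccuracy in your write-up: the nonemptiness of $\mf{H}^{\pm}$ does not follow from essentiality of $X$ (essentiality concerns halfspaces of hyperplanes of $X$, not the complementary regions of $U(\Gamma)$) nor from Remark~\ref{boundaries of crooked hyperplanes} (which only gives $\partial_{\infty}U(\Gamma)\neq\emptyset$); the clean justification is exactly the skewering you establish later --- $\Gamma$ being skewered by some $g'$ forces $(g')^{\pm}\in\partial_{\infty}C_{\pm}-\Lambda U(\Gamma)$, which is what the paper records in the sentence stating that a skewered $\Gamma$ makes $G_\Gamma^0$ codimension-one.
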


The following proves Theorem~\ref{one orbit intro} in the one-ended case.

\begin{cor}\label{cor1}
Every cocompactly cubulated one-ended hyperbolic group admits an essential, cocompact cubulation with a single orbit of hyperplanes.
\end{cor}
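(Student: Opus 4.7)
\medskip
\noindent\textbf{Proof plan.}
The plan is to reduce the statement to a direct application of Corollary~\ref{main}. First, by Proposition~\ref{facts about panel collapse}, I would replace the given cocompact cubulation of $G$ with a proper, cocompact, essential, hyperplane-essential action on a $\delta$-hyperbolic $\CAT$ cube complex $X$. With the constants $M$, $D$, $K$ from Section~\ref{systems of switches sect}, fix $n>\max(8\delta,\,2(M+D+2K))$ and invoke Lemma~\ref{constructing systems of switches 2} to produce a full $n$-system of switches $\mscr{S}=(\mc{S},H)$. Since $H\leq G$ has finite index and $G\acts X$ is cocompact, the quotient graph $H\backslash\mc{G}(\mscr{S})$ is finite.

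The core of the argument is to exhibit a crooked hyperplane $\G\cu\mc{G}(\mscr{S})$ whose projection to $H\backslash\mc{G}(\mscr{S})$ is surjective, since Corollary~\ref{main} then immediately delivers an essential cocompact $G$-action on a $\CAT$ cube complex $X_\G$ with a single orbit of hyperplanes. By Proposition~\ref{existence}, it suffices to construct a compact, two-sided subtree $A\cu\mc{G}(\mscr{S})$ that already projects surjectively onto $H\backslash\mc{G}(\mscr{S})$, because any crooked hyperplane $\G\supseteq A$ inherits this surjectivity. To build $A$, I would choose a finite set of representatives for each $H$-orbit of vertices and edges of $\mc{G}(\mscr{S})$, translate them by $H$ into a common connected component $C$ of the forest $\mc{G}(\mscr{S})$, and take the convex hull in $C$, attaching pendant lifts of any still-missing edge orbits. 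Since $\mc{G}(\mscr{S})$ is a forest by Proposition~\ref{first properties}(1), such additions branch off the existing subtree rather than creating loops, producing a finite subtree. Two-sidedness at each degree-$4$ type-2 vertex of $A$ is maintained by adding edges incrementally and selecting exactly two incident edges per type-2 vertex.

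The step I expect to be the main obstacle is guaranteeing that these representatives can be placed into a single connected component of $\mc{G}(\mscr{S})$, equivalently, that $H$ acts transitively on the components of $\mc{G}(\mscr{S})$: otherwise $H\backslash\mc{G}(\mscr{S})$ is disconnected and no connected subtree $A$ can surject onto it. I would address this by unpacking the construction of $\mscr{S}$ in Lemma~\ref{constructing systems of switches 2}: the chain of consecutively transverse hyperplanes $\mf{u}_1,\dots,\mf{u}_m$ gives rise to type-2 vertex orbits $[\{\mf{u}_i,\mf{u}_{i+1}\}]$ that link successive $H$-orbits of hyperplanes through the quotient, so $H\backslash\mc{G}(\mscr{S})$ is connected and hence $H$ acts transitively on the components of $\mc{G}(\mscr{S})$, as required.
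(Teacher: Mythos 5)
Your plan is essentially the paper's proof: pass to a bald/hyperplane-essential cubulation via Proposition~\ref{facts about panel collapse}, take a full $n$-system of switches with $n$ large, produce a compact two-sided subtree $A$ of one component of $\mc{G}(\mscr{S})$ surjecting onto $H\backslash\mc{G}(\mscr{S})$, enlarge it to a crooked hyperplane by Proposition~\ref{existence}, and conclude with Corollary~\ref{main}. Your argument for the connectedness of $H\backslash\mc{G}(\mscr{S})$ (and hence that some component surjects) via the chain $\mf{u}_1,\dots,\mf{u}_m$ from Lemma~\ref{constructing systems of switches 2} is a useful elaboration that the paper states without justification, and the deduction that $H$ is then transitive on components is indeed correct for graph actions without edge inversions, even though you don't actually need the full strength of transitivity.

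Where you are vaguer than the paper is in getting $A$ \emph{two-sided}. Taking a convex hull of orbit representatives in a component $\mc{T}$ will generically produce type-2 vertices of degree $3$ or $4$, since every such vertex of degree $\geq 3$ in a subtree is a branch point; you cannot simply delete edges to lower the degree without disconnecting $A$ or losing orbit representatives, and ``adding edges incrementally and selecting exactly two incident edges per type-2 vertex'' does not explain how to avoid the situation where reaching a missing orbit forces a third edge at an already-saturated type-2 vertex. The paper's route is to observe that hyperplane-essentiality of $X$ forces $\mc{T}$ to have no leaves (since each component of $\mf{w}-U_\pitchfork(\mscr{S})$ has at least two boundary walls; cf.\ the proof of Lemma~\ref{large components}), so that $H_{\mc{T}}$ acts cocompactly and minimally on $\mc{T}$; this is what lets one maneuver to a different translate of a saturated type-2 vertex and thereby close up a compact two-sided subtree hitting every orbit. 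So I would add the ``no leaves'' observation explicitly, and replace the convex-hull step with an incremental path-extension argument that uses it.
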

\begin{proof}
Let $G$ be a one-ended hyperbolic group with a proper cocompact action on a $\CAT$ cube complex $X$.  By 
Proposition~\ref{facts about panel collapse}, we can assume that $X$ is essential and hyperplane-essential. 
Lemma~\ref{constructing systems of switches 2} provides a full $n$--system of switches $\mscr{S}$ with $n>2(M+D+2K)$, where 
$D,K,M$ are as above. Every connected component $\mc{T}\cu\mc{G}(\mscr{S})$ is a tree that projects surjectively to 
$H\backslash\mc{G}(\mscr{S})$. Since $X$ is hyperplane-essential, $\mc{T}$ has no leaves. The stabiliser $H_{\mc{T}}$ acts 
cocompactly on $\mc{T}$, hence minimally. It follows that there exists a compact, two-sided subtree 
$A\cu\mc{T}\cu\mc{G}(\mscr{S})$ that projects surjectively to the finite graph $H\backslash\mc{G}(\mscr{S})$. By 
Proposition~\ref{existence}, there exists a crooked hyperplane $\G\cu\mc{G}(\mscr{S})$ containing $A$ and we can apply 
Corollary~\ref{main}.
\end{proof}

\begin{rmk}
The cubulation provided by Corollary~\ref{main} is in general not hyperplane-essential. This is due to the following configuration,
in which we may find two crooked hyperplanes $\mc{C}_1=U(\G)$ and $\mc{C}_2=gU(\G)$. Denoting by $\mc{C}_i^{\pm}$ the
two connected components of $X-\mc{C}_i$, all four intersections
$\mc{C}_1^{\pm}\cap\mc{C}_2^{\pm}$ might contain points arbitrarily far from $\mc{C}_1\cup\mc{C}_2$, even if, 
say, the intersection 
$\mc{C}_1\cap\mc{C}_2^+$ is bounded. In this case, the cubulation of $G$ arising from $G\cdot U(\G)$ has transverse
hyperplanes $\mf{w}_1,\mf{w}_2$ arising from $\mc{C}_1,\mc{C}_2$, 
but it is impossible to skewer $\mf{w}_1\cap\mf{w}_2$ with a hyperbolic element stabilising $\mf{w}_1$.
\end{rmk}

\subsection{The infinitely-ended case.}\label{infinitely-ended sect}

In this subsection, we complete the proof of Theorem~\ref{one orbit intro} by addressing the case where $G$ is not one-ended. 

The idea is to construct ``antennae'' (in the sense of \cite{Wise-antenna}) in the maximal Bass-Serre tree and to attach 
crooked hyperplanes constructed for the one-ended vertex groups.  We now describe the construction in detail.

Let $G$ be a cocompactly cubulated hyperbolic group. By \cite{Dunwoody1,Dunwoody2},  $G$ is the fundamental group of a 
finite graph of groups $\mscr{G}$ where edge groups are finite and vertex groups are either finite or one-ended. Let 
$G\acts\mc{T}$ be the action on the corresponding Bass-Serre tree. Let $G_1,\dots,G_k$ be the one-ended vertex groups and let 
$v_1,\dots,v_k$ be their fixed vertices in $\mc{T}$. 

\subsubsection{The orbicomplex $\overline X$}\label{subsubsec:orbicomplex}
By Proposition~1.2 in \cite{Bow-JSJ}, each $G_i$ is quasiconvex in $G$ and, by Theorem~H in \cite{Haglund-GD}, each $G_i$ is 
cocompactly cubulated.  By Proposition~\ref{facts about panel collapse}, we can pick a proper cocompact action of each $G_i$ 
on an essential, hyperplane-essential $\CAT$ cube complex $X_i$. Cubically subdividing if necessary, we can assume that each 
$G_i\acts X_i$ has no hyperplane inversions. Each finite subgroup $F\leq G_i$ has a global fixed point in $X_i$; hence $F$ 
preserves a cube of $X_i$ and, since there are no hyperplane inversions, $F$ must fix a vertex. 

We now construct a specific ``orbicomplex'' $\overline X$ with $G=\pi_1\overline X$.  We start with the disjoint union of the 
quotient orbicomplexes $\overline X_i:=G_i\backslash X_i$ for ${1\leq i\leq k}$, plus a singleton for every finite vertex 
group of $\mscr{G}$. For each edge of $\mscr{G}$, we add an edge connecting the corresponding orbicomplexes $\overline X_i$ 
or singletons. If $F$ is the associated edge group, we ensure that the attaching vertex in $\overline X_i$ is the projection 
of a vertex of $X_i$ that is fixed by the image of the homomorphism $F\ra G_i$. 

Let $G\acts X$ be the action on the universal cover of $\overline X$.  This is a proper cocompact action on an essential, 
hyperplane-essential $\CAT$ cube complex. 

We do not want to identify effectively parallel hyperplanes, as this can alter
the action $G\acts\mc{T}$. However, the construction of $X$ will also be required later on in the 
proof of Theorem~\ref{infinitely many intro}. For that purpose, we observe that Proposition~\ref{facts about panel collapse}
yields:

\begin{lem}\label{assembling cubulations}
Let $G$ be a cocompactly cubulated hyperbolic group and let $G_1,\dots,G_k$ be the one-ended factors of the maximal splitting 
of $G$ over finite subgroups. Given bald cubulations $G_i\acts X_i$, there exists a bald cubulation $G\acts X$ such that, 
for each $i$, the $G_i$--essential core of the restriction $G_i\acts X$ is $G_i$--equivariantly isomorphic to $X_i$.
\end{lem}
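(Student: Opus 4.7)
The plan is to feed the given bald cubulations $G_i\acts X_i$ into the orbicomplex construction from Subsection~\ref{subsubsec:orbicomplex} and then to invoke Proposition~\ref{facts about panel collapse} to make the resulting cubulation bald, checking that this last step does not disturb the prescribed $G_i$--structure.

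First, I would carry out the orbicomplex construction verbatim using the given $X_i$ as input: assemble $\overline X$ from the quotients $\overline X_i = G_i\backslash X_i$, singletons for each finite vertex group of $\mscr{G}$, and one edge per edge of $\mscr{G}$, attached at a vertex of $\overline X_i$ coming from a vertex of $X_i$ fixed by the image of the corresponding edge group. Let $G\acts X_0$ denote the universal cover. This action is proper, cocompact, essential and hyperplane-essential, and the inclusion $\overline X_i\hookrightarrow\overline X$ lifts to a $G_i$--equivariant isometric embedding of $X_i$ onto a convex subcomplex of $X_0$. Any hyperplane of $X_0$ not contained in this copy is either a Bass--Serre hyperplane or lies in a distinct vertex-space, and a single $G_i$--orbit then lies on only one of its sides; hence the $G_i$--essential core of the restriction $G_i\acts X_0$ coincides with this copy of $X_i$.

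Next, I would apply Proposition~\ref{facts about panel collapse} to $X_0$ to obtain a bald cubulation $G\acts X$. Because $X_0$ is already essential and hyperplane-essential, the only nontrivial step in the proof of that proposition is the compression $\cmp(X_0)$, which identifies each equivalence class of pairwise strongly parallel hyperplanes into a single hyperplane of $X$.

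The main obstacle is to verify that this compression acts trivially on each embedded copy of $X_i$. I would argue that a hyperplane $\mf{w}_1\in\mscr{W}(X_0)$ lying in a copy of $X_i$ cannot be strongly parallel to any hyperplane $\mf{w}_2\in\mscr{W}(X_0)$ outside of that copy: indeed, a slab between their bounding halfspaces would otherwise have to cross at least one Bass--Serre edge and so contain an entire vertex-space (a copy of some $X_j$ with $G_j$ one-ended and infinite, or an infinite subtree of vertex-spaces), contradicting the bounded Hausdorff width required for strong parallelism. Since each $X_i$ is itself bald, no two hyperplanes inside a single copy of $X_i$ are strongly parallel in $X_0$ either, so compression leaves every copy of $X_i$ pointwise fixed. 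Consequently $X_i$ embeds as a $G_i$--invariant convex subcomplex of $X$, and the same essential-core argument as above identifies the $G_i$--essential core of $G_i\acts X$ with this copy, completing the proof.
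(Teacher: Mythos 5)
Your overall strategy is the same as the paper's: run the orbicomplex construction of Section~\ref{subsubsec:orbicomplex} with the given $X_i$ as vertex spaces and then shave via Proposition~\ref{facts about panel collapse}, checking that the compression step does not disturb the copies of $X_i$. That much is right, and your argument that a hyperplane inside a copy of $X_i$ cannot be strongly parallel to a hyperplane outside it (since the region between them contains an unbounded piece of a vertex space or attached subtree, so the Hausdorff distance is infinite) is essentially the right reason, even if the phrase ``contain an entire vertex-space'' is a little off: the slab need not contain a whole vertex space, only an unbounded piece of one, but that already rules out bounded width.

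There is a genuine gap at the very start, however. You write that each edge is ``attached at a vertex of $\overline X_i$ coming from a vertex of $X_i$ fixed by the image of the corresponding edge group,'' taking for granted that such a vertex exists. A bald cubulation $G_i\acts X_i$ need not act stably without inversions, and a finite subgroup $F\le G_i$ need then only fix a cube, not a vertex (think of a reflection in a Niblo--Reeves cubulation of a Coxeter group, which inverts a wall and fixes only the midpoint of the dual edge). The paper's Section~\ref{subsubsec:orbicomplex} handles this by cubically subdividing $X_i$, but that move is unavailable here: after subdividing $X_i$ to $X_i'$, the two subdivision-doubles of a hyperplane $\mf w$ that passes through an attachment barycenter are no longer strongly parallel in $X_0$, since the strip between them contains the (unbounded) attached subtree. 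So compression does not undo the subdivision along those orbits, and the $G_i$--essential core of the shaved complex would be $X_i'$ with only some pairs collapsed, rather than $X_i$ itself. Your argument goes through verbatim whenever every edge group fixes a vertex of $X_i$ (e.g.\ when $G$ is torsion-free, where the edge groups are trivial), but in the presence of torsion some additional argument is needed to produce $G\acts X$ whose $G_i$-essential core is literally $X_i$ and not a partial subdivision of it. This subtlety is not addressed in your proof (and, to be fair, the paper's own one-line derivation elides it as well).
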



In fact, the case when $G$ has torsion will only be needed in 
Section~\ref{sec:bald_hyperbolic} when we prove Theorem~\ref{infinitely many intro}. 

In the remainder of this section, we can and shall assume that $\overline X=G\backslash X$ is a genuine cube 
complex by passing to a torsion-free finite-index subgroup of $G$ (whose existence is guaranteed by 
specialness~\cite{Agol,Haglund-Wise-GAFA}) and applying the following trick:

\begin{lem}\label{supergroups}
Let $G$ be a hyperbolic group with a finite-index subgroup $H\leq G$. Then, if $H$ admits a cocompact cubulation
with a single orbit of hyperplanes, so does $G$.
\end{lem}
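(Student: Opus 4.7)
The idea is to construct a single abstract hyperplane $\mf{H}$ for $G$ from the given $H$-action on $Y$, and then cubulate $G$ using the $G$-orbit $\mc{H}=G\cdot\mf{H}$ via Proposition~\ref{cubulating with abstract hyperplanes}. Since $[G:H]<\infty$, the inclusion $H\hookrightarrow G$ induces an $H$-equivariant identification $\partial_\infty H=\partial_\infty G$. Passing to the $H$-essential core if needed, one may assume that $Y$ is essential: indeed, the hyperplanes of $Y$ form a single $H$-orbit, so if $Y$ were not essential they would all be inessential, and the essential core (as in \cite[Proposition 3.5]{CS}) would be $0$-dimensional; but then $H$ would act properly and cocompactly on a discrete set, contradicting the fact that $H$ is infinite (which we may assume, else the lemma is trivial).

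Fix a hyperplane $\mf{w}$ of $Y$ with stabiliser $H_\mf{w}$; this is quasiconvex in $H$, hence in $G$. Both associated halfspaces $\mf{h}^\pm$ are unbounded and therefore have nonempty limit sets in $\partial_\infty Y=\partial_\infty G$. Using hyperbolicity of $G$ together with the cocompactness of $H_\mf{w}\acts\mf{w}$ (Lemma~\ref{cocompact hyperplanes}), one verifies the identities $\L\mf{h}^+\cup\L\mf{h}^-=\partial_\infty G$ and $\L\mf{h}^+\cap\L\mf{h}^-=\L H_\mf{w}$, so that the $H_\mf{w}$-invariant open sets $\mf{H}^\pm:=\L\mf{h}^\pm-\L H_\mf{w}$ yield a valid abstract hyperplane $\mf{H}=(H_\mf{w},\{\mf{H}^+,\mf{H}^-\})$ in the sense of Definition~\ref{abstract hyperplanes defn}. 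Set $\mc{H}:=G\cdot\mf{H}$, which consists of a single $G$-orbit of abstract hyperplanes; Proposition~\ref{cubulating with abstract hyperplanes}(1) then produces a cocompact $G$-action on an essential $\CAT$ cube complex $X(\mc{H})$ with exactly one orbit of hyperplanes.

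For properness, by Proposition~\ref{cubulating with abstract hyperplanes}(2) it suffices to separate $g^+$ from $g^-$ in $\partial_\infty G$ by some element of $\mc{H}$, for every infinite-order $g\in G$. Pick $n\geq 1$ with $g^n\in H$; then $g^n$ acts loxodromically on $Y$, and any edge along an axis of $g^n$ yields a hyperplane of $Y$ separating $(g^n)^\pm=g^\pm\in\partial_\infty Y=\partial_\infty G$. This hyperplane has the form $h\mf{w}$ for some $h\in H$, and the corresponding translate $h\mf{H}\in\mc{H}$ separates $g^\pm$ as required. The main obstacle is the limit-set identity $\L\mf{h}^+\cap\L\mf{h}^-=\L H_\mf{w}$, which guarantees that $\{\mf{H}^+,\mf{H}^-\}$ really is a partition of $\partial_\infty G-\L H_\mf{w}$ into two nonempty open subsets. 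This follows from hyperbolicity and the convexity of halfspaces: any two sequences in $\mf{h}^+$ and $\mf{h}^-$ converging to a common boundary point must eventually be close in the Gromov sense, and hyperbolicity forces the $\ell_1$-geodesics between them (which necessarily cross $\mf{w}$) to have crossing points converging to the same boundary point, which therefore lies in $\L\mf{w}=\L H_\mf{w}$.
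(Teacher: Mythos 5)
Your proposal is essentially the same as the paper's proof: build an abstract hyperplane $(\,\cdot\,,\mf{H})$ from a hyperplane of the $H$-cubulation, apply Proposition~\ref{cubulating with abstract hyperplanes} to $G\cdot\mf{H}$ for cocompactness and a single orbit, and verify properness by noting that each infinite-order $g\in G$ has a power in $H$ which skewers some hyperplane, so $g^{\pm}$ are separated. The one small deviation is that you take the hyperplane-stabiliser $H_{\mf w}$ as the underlying subgroup, whereas the paper uses the halfspace-stabiliser $H^{0}_{\mf w}$; the latter is the safer choice because it guarantees each side $\mf{H}^{\pm}$ is individually invariant under the subgroup, even when some element of $H_{\mf w}$ swaps the two halfspaces.
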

\begin{proof}
Let $H\acts X$ be a cocompact cubulation with a single $H$--orbit of hyperplanes. 
We pick a hyperplane $\mf{w}\in\mscr{W}(X)$, with associated halfspaces $\mf{h}^+,\mf{h}^-$ and halfspace-stabiliser 
$H^0_{\mf{w}}\leq H$. Consider the two sets $\mf{H}^{\pm}:=\L\mf{h}^{\pm}- \L H^0_{\mf{w}}$, where limit sets are 
taken in $\partial_{\infty}H=\partial_{\infty}G$. Since the action $H\acts X$ is necessarily essential, the sets $\mf{H}^{\pm}$ 
are nonempty and we obtain an abstract hyperplane $(H^0_{\mf{w}},\mf{H})$ for both $H$ and $G$.

We now apply Proposition~\ref{cubulating with abstract hyperplanes} to the collection $\mc{H}=G\cdot\mf{H}$. 
We obtain a cocompact action $G\acts X(\mc{H})$ with a single $G$--orbit of hyperplanes. 
If $g\in G$ has infinite-order, a power of $g$ lies in $H$, where it skewers a hyperplane of $X$. 
It follows that the points $g^{\pm}\in\partial_{\infty}G=\partial_{\infty}H$ 
are separated by an abstract hyperplane in $H\cdot\mf{H}\cu\mc{H}$. 
This shows that the action $G\acts X(\mc{H})$ is proper and, thus, the desired cubulation of 
$G$.
\end{proof}

We thus assume, in the rest of the discussion, that $G$ is {\bf torsion-free}. 

The $\CAT$ cube complex $X$ constructed right before Lemma~\ref{assembling cubulations} 
comes equipped with a natural $G$--equivariant projection $\pi\colon X\ra\mc{T}$.  For every open edge 
$e\cu\mc{T}$, the preimage $\pi^{-1}(e)\cu X$ consists of a single separating (open) edge of $X$. For every vertex $v\in\mc{T}$, the 
preimage $X_v:=\pi^{-1}(v)\cu X$ is a convex subcomplex of $X$ with a proper, cocompact, essential, hyperplane-essential 
action $G_v\acts X_v$; here $G_v\leq G$ denotes the stabiliser of the vertex $v\in\mc{T}$. We identify 
$X_{v_i}=\pi^{-1}(v_i)$ with $X_i$ and the action $G_{v_i}\acts X_{v_i}$ with $G_i\acts X_i$.

\subsubsection{Antennae}\label{subsubsec:antenna}

Recall that $G\acts X$ without inversions. Thus, by \cite{Haglund}, every $g\in G-\{1\}$ admits an axis $\g\cu X$. The projection $\pi(\g)\cu\mc{T}$ is either a single vertex (if $g$ is elliptic in $\mc{T}$), or an axis for $g$ in $\mc{T}$. 

Let $G_{BS}\cu G$ be the subset of elements that admit an axis $\g\cu X$ that projects \emph{injectively} to $\mc{T}$. In other words, this is an axis that does not contain any edges lying in one of the nontrivial fibres of the map $\pi\colon X\ra\mc{T}$.

Since $X$ is locally finite and the action $G\acts X$ is cocompact, we can find a finite collection $\mscr{P}$ of length-two paths in $\mc{T}$ with the following property. Every element of $G_{BS}$ has a conjugate whose axis (in $\mc{T}$) contains an element of $\mscr{P}$ as a sub-path.

Possibly replacing each element of $\mscr{P}$ by a $G$--translate, there exists a geodesic segment $\alpha_1\cu\mc{T}$ that intersects 
each element of $\mscr{P}$ in its middle vertex. Replacing each vertex $v_i\in\mc{T}$ with a $G$--translate if necessary, 
there exists another geodesic segment $\alpha_2\cu\mc{T}$ containing $v_1,\dots,v_k$ 
(recall that the $v_i$ are representatives of the $G$--orbits of vertices of $\mc{T}$ with infinite $G$--stabiliser, 
as chosen at the beginning of Section~\ref{infinitely-ended sect}). 
We can moreover assume that $\alpha_1$ 
and $\alpha_2$ intersect at an endpoint and only at that endpoint.

Let $\mc{A}\cu\mc{T}$ be the union of $\alpha_1$, $\alpha_2$ and all elements of $\mscr{P}$, shown in 
Figure~\ref{antenna1}.  This is an antenna with some missing arms (cf.\ Section~2.1 in \cite{Wise-antenna}). We also 
choose a finite tree $A\cu X$ with $\pi(A)=\mc{A}$.

\begin{figure}[h]
     \begin{overpic}[width=0.66\textwidth,keepaspectratio]{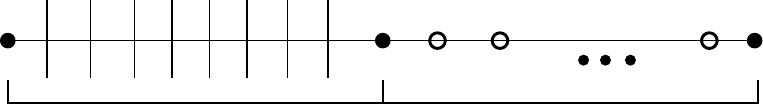}
          \put(20,-3){$\alpha_1$}
          \put(70,-3){$\alpha_2$}
          \put(22,13){$\mscr{P}$}
          \put(56,10){$v_1$}
          \put(92,10){$v_k$}
     \end{overpic}
\caption{The subtree $\mc{A}\cu\mc{T}$.}
\label{antenna1}
\end{figure}

%
%

\subsubsection{The cube complex $\overline U$}\label{subsubsec:orbicomplex_U}
As in the proof of Corollary~\ref{cor1}, there exist systems of switches $\mscr{S}_i$ in $X_i$ and crooked hyperplanes 
$\G_i\cu\mc{G}(\mscr{S}_i)$ that satisfy the hypotheses of Corollary~\ref{skewering cor}. Thus, every 
$g\in G_i-\{1\}$ has a conjugate of a power skewering 
$U(\G_i)\cu X_i$, and every geodesic in $X_i$ has a $G_i$--translate intersecting $U(\G_i)$ in a single point.

\begin{rmk}\label{annoying issue}
Replacing $\G_i$ with a $G_i$--translate, we can assume that there exists an element $a_i\in G_i$ such that $a_iU(\G_i)$ separates $U(\G_i)$ from $A\cap X_i$. This is a purely technical assumption to avoid an issue in the proof of Lemma~\ref{every g skewers} below.
\end{rmk}

Let us write $U_i=U(\G_i)$ for short and fix a shortest path $\beta_i\cu X_i$ from $U_i$ to $A\cap X_i$ 
(which is nonempty since $v_i\in\mc{A}$). 

Let $L_i\leq G_i$ denote the stabiliser of $U_i$ and let $L_i^0\leq L_i$ be the 
subgroup (of index at most two) that leaves invariant both connected components of $X_i- U_i$. Set $\overline 
U_i=L_i^0\backslash U_i$. This is a compact cube complex with a natural cubical immersion $\overline 
U_i\looparrowright\overline X_i'$ (recall that $\overline X_i'$ is the cubical subdivision of $\overline X_i$).

In fact, by construction, $U_i$ is CAT(0), because it is a tree of spaces whose vertex spaces are CAT(0) cube 
complexes and whose edge spaces are convex subcomplexes of the incident vertex spaces.  Since $L_i^0\acts U_i$ 
freely, we can identify $L_i^0$ with $\pi_1\overline U_i$, i.e.\ the immersion $\overline U_i\looparrowright\overline X_i$ 
induces the inclusion $L_i^0\hookrightarrow G_i$ at the level of fundamental groups.

\begin{figure}[h]
\begin{overpic}[width=0.66\textwidth,keepaspectratio]{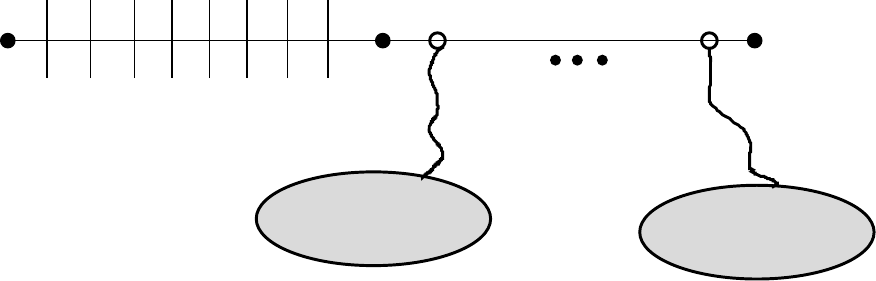}
\put(51,20){$\beta_1$}
\put(76,20){$\beta_k$}
\put(40,5){$\overline U_1$}
\put(85,5){$\overline U_k$}
\end{overpic}
\caption{The cube complex $\overline U\looparrowright G\backslash X$.}
\label{antenna2}
\end{figure}

We now assemble a cube complex $\overline U$ as in Figure~\ref{antenna2} by taking a copy of the tree 
$A\cup\beta_1\cup\dots\cup\beta_k\cu X$ and attaching a copy of $\overline U_i$ at the end of $\beta_i$ that does not lie on 
$A$. This comes equipped with a $\pi_1$--injective immersion $\overline U\looparrowright\overline 
X$.

The immersion $\overline U\looparrowright\overline X$ lifts to an embedding $U\hookrightarrow X$, where $U$ is the universal 
cover of $\overline U$; we also use the notation $U$ for the image of this embedding, which is shown in 
Figure~\ref{antenna3}.  (As shown in the figure, $U$ contains the tree $A\cup\beta_1\cup\dots\cup\beta_k\cu X$.)  
We identify the fundamental group $\pi_1\overline U$ with a 
subgroup $L\leq G$ that stabilises $U$ and acts cocompactly on it.  


\subsubsection{Quasiconvexity of $L$}\label{subsubsec:L_quasiconvex}
For each $v_i\in\mc{T}$, the intersection $U\cap X_i$ is the union of $U_i$ and all the $L_i^0$--translates of the path 
$\beta_i\cu X_i$. In particular, $U\cap X_i$ is at finite Hausdorff distance from $U_i$, hence quasiconvex.  For an 
arbitrary vertex $v\in\mc{T}$, the intersection $U\cap\pi^{-1}(v)$ is an $L$--translate of either some $U\cap X_i$, or some 
sub-path of the finite tree $A\cu X$. It follows that the intersections $U\cap\pi^{-1}(v)$ are uniformly quasiconvex 

Observe that $X$ decomposes as a tree of spaces with respect to the connected components of the various sets 
$\pi^{-1}(v)$, and $U$ is also a tree of spaces with respect to the components of $U\cap\pi^{-1}(v)$. It follows
from uniform quasiconvexity of the latter sets that $U$ is quasiconvex in $X$. Since the action $L\acts U$ is proper 
and cocompact, $L\leq G$ is a quasiconvex subgroup.

\begin{figure}[h]
\begin{overpic}[width=0.66\textwidth,keepaspectratio]{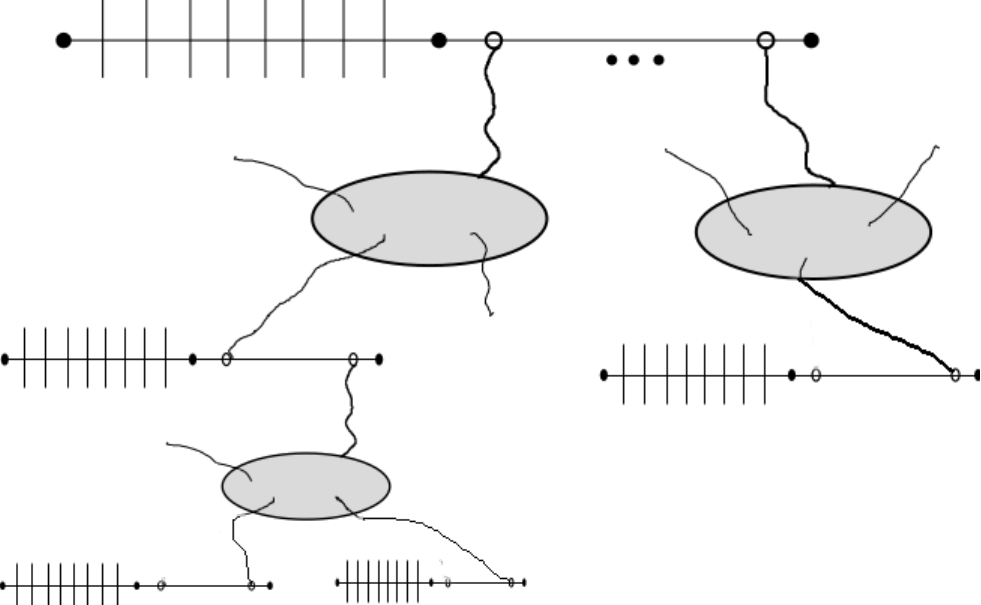}
     \put(43,37){$U_1$}
     \put(85,37){$U_k$}
     \put(52,51){$\beta_1$}
     \put(72,51){$\beta_k$}
\end{overpic}

\caption{The subcomplex $U\cu X$.}
\label{antenna3}
\end{figure}

\subsubsection{Cutting using $L$}\label{subsubsec:l_cut}
We now proceed to analyse the connected components of $X- U$.  Note that the projection 
$\mc{T}_U:=\pi(U)\cu\mc{T}$ is an $L$--invariant subtree and that the quotient $L\backslash\mc{T}_U$ is naturally identified 
with $\mc{A}$. We define an $L$--invariant map $\mf{p}\colon X\ra\mc{A}$ by composing the projection $\pi\colon X\ra\mc{T}$ 
with the nearest-point projection $\mc{T}\ra\mc{T}_U$ and, finally, the quotient projection $\mc{T}_U\ra 
L\backslash\mc{T}_U\cong\mc{A}$.

We also consider the $L$--invariant convex subset $\mc{U}=\pi^{-1}(\mc{T}_U)\cu X$, which contains $U$, and the 
$L$--equivariant gate-projection $p_{\mc{U}}\colon X\ra\mc{U}$. Observe that $\mf{p}\o p_{\mc{U}}=\mf{p}$. We denote by $C_i^+$ and $C_i^-$ the two connected components 
of $X_i- U_i$ and by $\mc{C}_i^{\pm}\cu\mc{U}$ the unions of all their $L$--translates.

\begin{lem}\label{labelling of components}
\begin{enumerate}
\item[]
\item For every vertex $w\in\mc{A}$, the set $\mf{p}^{-1}(w)- U$ is an $L$--invariant union of connected components 
of $X- U$.  Every connected component of $X- U$ is contained in one of these sets. 
\item  If $C$ is a connected component of $\mf{p}^{-1}(v_i)- U$, the projection $p_{\mc{U}}(C)$ is contained in 
either $\mc{C}_i^+$ or $\mc{C}_i^-$ (and this property is $L$--invariant).
\end{enumerate}
\end{lem}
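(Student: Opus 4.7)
The plan is to exploit the tree-of-spaces structure of $X$ over the Bass-Serre tree $\mc{T}$ together with the $L$-equivariance of the construction.

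For part~(1), observe first that $\mf{p}$ is $L$-invariant since it factors through the quotient $\mc{T}_U\ra L\backslash\mc{T}_U\cong\mc{A}$, and $U$ is $L$-invariant, so each set $\mf{p}^{-1}(w)- U$ is automatically $L$-invariant. The heart of the argument is that $\mf{p}$ is constant on every connected component of $X- U$. Since $U$ contains the tree $A\cu X$ and is $L$-invariant, every edge of $X$ projecting to an edge of $\mc{T}_U=L\cdot\mc{A}$ lies in $U$; hence for any path $\g\cu X- U$, its projection $\pi(\g)$ traverses no edge of the subtree $\mc{T}_U$. Because a path in a tree between two distinct vertices of a subtree must traverse the subtree-geodesic between them, $\pi(\g)$ touches $\mc{T}_U$ in at most one vertex, so $q\o\pi$ (and therefore $\mf{p}$) is constant along $\g$.

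For part~(2), by part~(1) and after applying a suitable $l\in L$, I may assume $C\cu\pi^{-1}(q^{-1}(v_i))$, so $q(\pi(x))=v_i$ for every $x\in C$. I first argue that $p_{\mc{U}}(C)\cu X_i$: if $\pi(x)=v_i$ then $p_{\mc{U}}(x)=x\in X_i$; otherwise, the $\ell_1$-geodesic from $x$ to $\mc{U}$ traverses the fibers along the $\mc{T}$-geodesic from $\pi(x)$ to $v_i$ and enters $\mc{U}$ at the $X_i$-endpoint of the unique edge connecting the last branch-fiber to $X_i$, so $p_{\mc{U}}(x)\in X_i$. The crucial observation is that $U_i$ is a union of pieces of hyperplanes of $X_i$, hence a codimension-one subcomplex containing no $0$-cube of $X_i$; thus every vertex of $X_i$ lies in $C_i^+\sqcup C_i^-$, and in particular $p_{\mc{U}}(x)\in C_i^+\sqcup C_i^-$ for every vertex $x\in C$.

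It remains to check that $p_{\mc{U}}$ sends pairs of adjacent vertices of $C$ to vertices on the same side of $U_i$. For an edge $e$ of $X$ joining two vertices of $C$, a short case analysis --- according to whether $e$ lies inside a single branch-fiber, between two branch-fibers, between a branch-fiber and $X_i$, or inside $X_i$ --- shows that the images under $p_{\mc{U}}$ of its endpoints either coincide or are joined by an edge of $X_i$ not dual to any hyperplane in $U_i$. Indeed, an edge of $X_i$ dual to a hyperplane in $U_i$ would cross $U$, forcing one of its endpoints out of $C$. Connectedness of $C$ now forces $p_{\mc{U}}(C)\cu C_i^+$ or $p_{\mc{U}}(C)\cu C_i^-$, hence $p_{\mc{U}}(C)\cu\mc{C}_i^+$ or $\mc{C}_i^-$; the $L$-invariance of the property is immediate from the reduction via $l$. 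The main obstacle is this final case analysis, which requires careful bookkeeping on how edges of $X- U$ interact with $U_i$ under gate projection, but it is ultimately straightforward given that $U_i$ meets no $0$-cube of $X_i$ and that $U$ contains $A$ together with all of its $L$-translates.
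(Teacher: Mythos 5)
Your proof is correct and follows essentially the same route as the paper's: for part~(1), show that $\mf{p}$ is constant on components of $X-U$ because preimages of edge-interiors of $\mc{A}$ lie in $U$; for part~(2), project $C$ into a single $L$-translate of $X_i$, observe that $U_i$ is a union of midcubes and so contains no $0$-cube of $X_i$ (so the image avoids $U_i$), and use connectedness. Two small slips are worth fixing. First, the reduction ``$C\cu\pi^{-1}(q^{-1}(v_i))$'' is not what you use; you mean $\mathrm{proj}_{\mc{T}_U}\circ\pi(C)=\{v_i\}$ after translation (indeed $\pi(C)$ typically leaves $\mc{T}_U$, which is exactly what your subsequent geodesic argument handles), and likewise in part~(1) ``$q\circ\pi$'' should be $q\circ\mathrm{proj}_{\mc{T}_U}\circ\pi=\mf{p}$. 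Second, your $L$-invariance claim at the end tacitly uses that $\mc{C}_i^+$ and $\mc{C}_i^-$ are disjoint, which the paper justifies by recalling that $L_i^0$ does not swap the two sides of $U_i$; that observation should be made explicit. Finally, the paper closes part~(2) a bit more cleanly than your edge-by-edge case analysis: once one knows $p_{\mc{U}}(\mf{p}^{-1}(v_i)-U)\cu\mc{C}_i^+\sqcup\mc{C}_i^-$, the sets $\mc{C}_i^{\pm}$ are open in that union, so connectedness of $p_{\mc{U}}(C)$ immediately forces it into one side.
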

\begin{proof}
Recall that the map $\mf{p}\colon X\ra\mc{A}$ is continuous and $L$--invariant.  If $x\in\mc{A}$ is a point in the interior 
of an edge, we have $\mf{p}^{-1}(x)\cu U$. Hence $X- U$ is a disjoint union of the finitely many, closed, 
$L$--invariant subsets $\mf{p}^{-1}(w)- U$, where $w\in\mc{A}$ is a vertex. This proves part~(1).
 
Recall that the $L$--stabiliser of $X_i\cu X$ is $L_i^0$.  Given that $L_i^0$ does not swap the two sides of $U_i\cu X_i$, 
the sets $\mc{C}_i^+$ and $\mc{C}_i^-$ are disjoint. Since $\mf{p}\o p_{\mc{U}}=\mf{p}$, a point $x\in X$ lies in 
$\mf{p}^{-1}(v_i)$ for some $1\leq i\leq k$ if and only if the projection $p_{\mc{U}}(x)$ lies in an $L$--translate of the 
subset $X_i\cu X$. In particular, $p_{\mc{U}}(\mf{p}^{-1}(v_i)- U)\cu\mc{C}_i^+\sqcup\mc{C}_i^-$. Observing that the 
$\mc{C}_i^{\pm}$ are open in the union $\mc{C}_i^+\sqcup\mc{C}_i^-$, we obtain part~(2).
\end{proof} 

Recall that each element $P\in\mscr{P}$ is a length-two sub-path $P\cu\mc{A}$; we denote by $z_P^{\pm}\in\mc{A}$ its two 
endpoints.  Let $\mc{H}^-\cu X$ be the union of all connected components of $X- U$ that are contained either in 
$\mf{p}^{-1}(z_P^-)$ for some $P\in\mscr{P}$, or in $p_{\mc{U}}^{-1}(\mc{C}_i^-)$ for some $1\leq i\leq k$. We also set 
$\mc{H}^+:=X-(\mc{H}^-\sqcup U)$. In particular, $\mc{H}^+$ contains all connected components of $X- U$ that 
are either contained in some $\mf{p}^{-1}(z_P^+)$ or in some $p_{\mc{U}}^{-1}(\mc{C}_i^+)$.

We obtain an $L$--invariant partition $X=\mc{H}^-\sqcup U\sqcup\mc{H}^+$.  By part~(3) of Lemma~\ref{connected components in general}, 
this gives rise to an abstract hyperplane $(L,\mf{H})$ (cf.\ Definition~\ref{abstract hyperplanes defn}), where 
$\mf{H}^{\pm}=\L(U\sqcup\mc{H}^{\pm})-\L U$. Note that the sets $\mf{H}^{\pm}$ are both nonempty as, for each $1\leq 
i\leq k$, the intersection $p_{\mc{U}}(U)\cap X_i$ is at finite Hausdorff distance from $U_i$ and 
$\partial_{\infty}C_i^{\pm}-\partial_{\infty}U_i\neq\emptyset$.

Now, applying Proposition~\ref{cubulating with abstract hyperplanes} to the collection of abstract hyperplanes 
$G\cdot\mf{H}$, we obtain a cocompact, essential $G$--action on a $\CAT$ cube complex with a single orbit of hyperplanes. In 
order to complete the proof of Theorem~\ref{one orbit intro}, we are only left to show that this action is proper. By 
part~(2) of Proposition~\ref{cubulating with abstract hyperplanes}, this amounts to the following:

\begin{lem}\label{every g skewers}
Every $g\in G-\{1\}$ has a conjugate $h$ with $h^+\in\mf{H}^+$ and $h^-\in\mf{H}^-$ (or $h^-\in\mf{H}^+$ and $h^+\in\mf{H}^-$).
\end{lem} 
\begin{proof}
There are two cases to consider, depending on whether $g$ lies in $G_{BS}$. 

If $g\in G_{BS}$, we can replace $g$ with a conjugate so that a path $P\in\mscr{P}$ is contained in its axis $\g\cu\mc{T}$.  
Any axis $\g'\cu X$ will satisfy $\pi(\g')=\g$ and $\pi(\g'\cap U)=P$. It 
follows that $\g'- U$ contains sub-rays lying in $\mc{H}^+$ and $\mc{H}^-$. Without loss of generality, we have
$g^+\in\L(U\sqcup\mc{H}^+)$ and $g^-\in\L(U\sqcup\mc{H}^-)$. Since no power of $g$ stabilises $U$, Lemma~\ref{intersection of 
limit sets} guarantees that $g^{\pm}\not\in\L U$. Thus $g^+\in\mf{H}^+$ and $g^-\in\mf{H}^-$, as required.

Suppose instead that $g\not\in G_{BS}$. Then, replacing $g$ with a conjugate, we can assume that $g$ admits an axis $\g\cu X$ that intersects one of the spaces $X_i\cu X$ in a non-trivial geodesic. By our choice of the crooked hyperplanes $\G_i$, the geodesic $\g\cap X_i$ intersects $U_i$ in a single point. Thus, either $\g\cu X_i$ and $C_i^{\pm}$ each contain a sub-ray of $\g$, or $\g\cap X_i$ is a finite segment with one endpoint in $C^+$ and one endpoint in $C^-$. 

We conclude that the sets $\mc{H}^{\pm}$ each contain a sub-ray of $\g$, unless possibly if $\g\cap X_i$ is a segment with one of its endpoints in $A\cap X_i$. This issue can be avoided simply by conjugating $g$ by the element $a_i\in G_i$ mentioned in Remark~\ref{annoying issue}. 

Without loss of generality, we have $g^+\in\L(U\sqcup\mc{H}^+)$ and $g^-\in\L(U\sqcup\mc{H}^-)$ once again. It is clear that no power of $g$ stabilises $U$. As before, we conclude that  $g^+\in\mf{H}^+$ and $g^-\in\mf{H}^-$.
\end{proof}

We have proved:

\begin{cor}\label{cor:one_orbit}
Every cocompactly cubulated hyperbolic group admits a cocompact, essential cubulation with a single orbit of hyperplanes.
\end{cor}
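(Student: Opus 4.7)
The plan is to assemble the corollary from the results already established in this section. Let $G$ be a cocompactly cubulated hyperbolic group. If $G$ is one-ended, Corollary~\ref{cor1} provides the desired cubulation directly, so I may assume $G$ has more than one end.

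For the infinitely-ended case, the first step is to reduce to the torsion-free setting. Since $G$ is cocompactly cubulated and hyperbolic, by the results of Agol and Haglund--Wise it is virtually special, hence contains a torsion-free finite-index subgroup $G' \leq G$. By Lemma~\ref{supergroups}, if $G'$ admits a cocompact cubulation with a single orbit of hyperplanes, then so does $G$. Thus I may assume $G$ is torsion-free, and by Dunwoody's accessibility theorem $G$ splits as the fundamental group of a finite graph of groups with trivial edge groups and vertex groups that are either trivial or one-ended.

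The next step is to apply the machinery developed in Section~\ref{infinitely-ended sect}. For each one-ended vertex group $G_i$, Proposition~\ref{facts about panel collapse} gives a proper, cocompact, essential, hyperplane-essential cubulation $G_i \acts X_i$. Assembling the orbicomplex $\overline{X}$ as in~\ref{subsubsec:orbicomplex} produces a $G$-action on a $\CAT$ cube complex $X$ with projection $\pi \colon X \to \mc{T}$ to the Bass--Serre tree. The antenna construction in~\ref{subsubsec:antenna} yields a finite subtree $\mc{A} \subseteq \mc{T}$, to which I attach the crooked hyperplanes $U_i \subset X_i$ provided by Section~\ref{systems of switches sect} (with systems of switches as in the proof of Corollary~\ref{cor1}, each skewered by conjugates of powers of every infinite-order element of $G_i$). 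This assembles the subcomplex $U \subset X$ with stabilizer $L$ quasiconvex in $G$ (by~\ref{subsubsec:L_quasiconvex}), and the partition $X = \mc{H}^- \sqcup U \sqcup \mc{H}^+$ from~\ref{subsubsec:l_cut} gives an abstract hyperplane $(L, \mf{H})$.

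Finally, I apply Proposition~\ref{cubulating with abstract hyperplanes} to the $G$-orbit $\mc{H} = G \cdot \mf{H}$. Part~(1) of that proposition gives a cocompact, essential action of $G$ on a $\CAT$ cube complex with a single orbit of hyperplanes, and the immediately preceding lemma verifies the hypothesis of part~(2): every infinite-order $g \in G$ has a conjugate with $g^+ \in \mf{H}^+$ and $g^- \in \mf{H}^-$, treating separately the cases $g \in G_{BS}$ (where the antenna $\mc{A}$ guarantees that an axis crosses $U$ transversely) and $g$ fixing a vertex of $\mc{T}$ (where the crooked hyperplane $U_i$ does the work). This yields the required proper action. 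The real technical content already lives in the antenna construction and the properness verification in Section~\ref{infinitely-ended sect}; the corollary itself is a clean bookkeeping step that glues together Corollary~\ref{cor1}, Lemma~\ref{supergroups}, and the abstract-hyperplane output of~\ref{subsubsec:l_cut}.
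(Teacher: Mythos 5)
Your proposal is correct and follows essentially the same route as the paper: dispatch the one-ended case via Corollary~\ref{cor1}, reduce to the torsion-free setting via specialness and Lemma~\ref{supergroups}, split via Dunwoody accessibility, build the orbicomplex and antenna as in Section~\ref{infinitely-ended sect}, assemble the wall $U$ from the crooked hyperplanes $U_i$, form the abstract hyperplane $(L,\mf{H})$, and invoke Proposition~\ref{cubulating with abstract hyperplanes} together with the final skewering lemma. The only cosmetic difference is that the paper performs the Dunwoody splitting and orbicomplex construction before passing to a torsion-free finite-index subgroup (because the full construction is reused later for Theorem~\ref{infinitely many intro}), whereas you reduce to the torsion-free case first — this does not affect correctness.
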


\section{The number of bald cubulations.}

\subsection{Bald cubulations of hyperbolic groups.}\label{sec:bald_hyperbolic}

Let $G$ be a cocompactly cubulated, non-elementary hyperbolic group. We first assume that $G$ is one-ended.

\begin{lem}\label{minimal hyperplane}
Suppose that $G$ admits only finitely many bald cubulations up to equivariant cubical isomorphism. Then there 
exists a bald cubulation $G\acts X$ 
and a hyperplane $\mf{w}\in\mscr{W}(X)$ with the following property. Let $G\acts Y$ be a bald cubulation.  
Then for each hyperplane $\mf u\in\mscr W(Y)$, $\L\mf u$ is not properly contained in $\L\mf w$.
\end{lem}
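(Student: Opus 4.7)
The plan is to argue by contradiction, leveraging finite dimensionality of $X$ via the transversality dichotomy. Assume that no such $(X,\mf w)$ exists. Let $\mc L$ denote the collection of all limit sets of hyperplanes across all bald cubulations of $G$. Since there are finitely many bald cubulations (by hypothesis), each cocompact, $\mc L$ decomposes into finitely many $G$--orbits. Under the contradiction hypothesis, $\mc L$ has no $\subseteq$--minimal element, so one may inductively construct an infinite strictly descending chain $L_1\supsetneq L_2\supsetneq\cdots$ in $\mc L$. By pigeonhole there exist $i<j$ with $L_i$ and $L_j$ in the same $G$--orbit, producing $g\in G$ and $L=L_i\in\mc L$ with $gL\subsetneq L$. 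Writing $L=\L\mf w$ for some hyperplane $\mf w$ of a bald cubulation $G\acts X$, iteration of $g$ yields $L\supsetneq gL\supsetneq g^2L\supsetneq\cdots$, realised in $X$ by the hyperplanes $g^n\mf w$.

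For each $n\geq1$, the hyperplanes $\mf w$ and $g^n\mf w$ are distinct (different limit sets), so in $X$ they are either transverse or disjoint. First, since $G$ is one--ended hyperbolic, $X$ has no $\R$--factor in its de Rham decomposition (an $\R$--factor would virtually produce a $\Z\times H$--subgroup of $G$ with $H$ infinite, contradicting hyperbolicity); hence by baldness no two hyperplanes of $X$ are strongly parallel. Suppose for some $m\geq1$ that $\mf w$ and $g^m\mf w$ are disjoint. Then $g^m$ skewers $\mf w$, with fixed points $g^{\pm}$ (of both $g$ and $g^m$) lying on opposite open sides of $\mf w$ at infinity. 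If $K=G_\mf w$ is virtually cyclic then $L$ consists of exactly two points, so $g^mL$ also has two points and $g^mL\subsetneq L$ is impossible. If $K$ is not virtually cyclic, then $\mf w$ is not quasi--isometric to $\R$, so the axis of $g^m$ cannot lie in any bounded neighbourhood of $\mf w$; otherwise $K$ and $g^mKg^{-m}$ would be at bounded Hausdorff distance and hence commensurable in the hyperbolic group $G$, forcing $g^mL=L$ and contradicting strict inclusion. It follows that $g^{\pm}\notin L$. Consider $L_\infty=\bigcap_{n\geq0}g^{mn}L$: as a decreasing intersection of non--empty closed subsets of the compact space $\partial_\infty G$, it is non--empty, and it is closed and $g^m$--invariant. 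By the north--south dynamics of $g^m$ on $\partial_\infty G$, every non--empty closed $g^m$--invariant subset must contain $g^+$ or $g^-$, contradicting $L_\infty\subseteq L$ and $g^{\pm}\notin L$.

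Therefore $\mf w$ and $g^m\mf w$ are transverse for every $m\geq1$. By $G$--equivariance of transversality (applying $g^i$), the entire family $\{g^n\mf w\}_{n\in\Z}$ is pairwise transverse in $X$. But $n$ pairwise transverse hyperplanes in a $\CAT$ cube complex share a common $n$--cube by the Helly property, so $\dim X\geq n$ for every $n$; this contradicts the finite dimensionality of $X$. The main subtlety I foresee is the ``$g^{\pm}\notin L$'' step in the disjoint case: the reduction through commensurability of $K$ and $g^mKg^{-m}$ has to be phrased carefully in terms of one--endedness and quasiconvexity, but once this is in hand, the $L_\infty$ argument closes the disjoint case cleanly, and the transverse case yields the desired contradiction with $\dim X<\infty$.
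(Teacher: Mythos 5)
Your proof takes a genuinely different route from the paper's: after pigeonholing to get $g\in G$ with $gL\subsetneq L$ (the paper does this inside a torsion-free finite-index subgroup, which makes the ``$g$ has infinite order'' observation immediate), you move all the relevant hyperplanes $g^n\mf w$ into the single complex $X$ and exploit the transverse/disjoint dichotomy plus finite-dimensionality. The transverse case is correct and nicely economical. The paper instead stays entirely in $\partial_\infty G$: from $hL\subsetneq L$ one gets $h^+\in L$ by nesting and north--south dynamics, and then Lemma~\ref{intersection of limit sets} gives $\L(\langle h\rangle\cap G_{\mf w})\ne\emptyset$, hence $\langle h\rangle\cap G_{\mf w}$ is infinite, so a power of $h$ stabilises $\mf w$ and hence $L$, an immediate contradiction; no case split is needed.

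There is, however, a genuine gap in your disjoint case. The sentence ``the axis of $g^m$ cannot lie in any bounded neighbourhood of $\mf w$ \ldots\ It follows that $g^{\pm}\notin L$'' is a non-sequitur. The axis lying in a bounded neighbourhood of $\mf w$ is equivalent (by quasiconvexity of $\mf w$ and the Morse lemma) to \emph{both} endpoints $g^+,g^-$ lying in $\L\mf w$; so ruling out the former only rules out the conjunction, not each fixed point individually. To conclude $g^+\notin L$ (and likewise $g^-\notin L$) you need a separate argument: for instance, the forward ray of the axis crosses the nested hyperplanes $g^{mn}\mf w$, and $d(\mf w,g^{mn}\mf w)\to\infty$ (at least $n-1$ separating hyperplanes), so the forward ray exits every bounded neighbourhood of $\mf w$ --- incompatible with $g^+\in\L\mf w$ and quasiconvexity. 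Alternatively, and more cleanly, observe that $g^+\in L=\L G_{\mf w}$ would force $\L(\langle g\rangle\cap G_{\mf w})\ne\emptyset$ by Lemma~\ref{intersection of limit sets}, hence some $g^j\in G_{\mf w}$, hence $g^jL=L$, contradicting the strict descending chain. With one of these repairs the disjoint case closes, but as written the claimed implication does not hold. (Two minor points: the ``no $\R$--factor / no strongly parallel hyperplanes'' remark is never actually used, and ``$g^m$ skewers $\mf w$'' should really be ``some power of $g^m$ skewers $\mf w$'', since disjointness alone allows the intermediate halfspace configuration in which only $g^{2m}$ gives the strict containment --- though this does not affect the conclusion.)
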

\begin{proof}
If the lemma did not hold, there would exist an infinite sequence of bald cubulations $G\acts X_n$ and hyperplanes $\mf{w}_n\in\mscr{W}(X_n)$ 
such that  $\L\mf{w}_{n+1}\subsetneq\L\mf{w}_n$. Since $G$ is virtually special, it has a finite-index torsion-free subgroup $H\leq G$. 
Given that $G$ has only finitely many bald cubulations, and each has only 
finitely many $H$--orbits of hyperplanes, there exist $h\in H$ and $m<n$ with 
$h\L\mf{w}_m=\L\mf{w}_n\subsetneq\L\mf{w}_m$. 

Observing that $h$ has infinite order, it follows that $h^+\in\L\mf{w}_m$. Hence, by Lemma~\ref{intersection of limit sets}, we have $\L(\langle h\rangle\cap{\rm Stab}_G(\mf{w}_m))=\{h^{\pm}\}\cap\L\mf{w}_m\neq\emptyset$. It follows that $\langle h\rangle\cap{\rm Stab}_G(\mf{w}_m)$ is infinite, i.e.\ a positive power of $h$ stabilises $\L\mf{w}_m$. This is a contradiction.
%
%
%
\end{proof}

Suppose $G$ has finitely many bald cubulations.  Let $G\acts X$ and $\mf{w}\in\mscr{W}(X)$ be the cubulation and hyperplane 
provided by Lemma~\ref{minimal hyperplane}. By essentiality of $G\acts X$, there exists an element $g\in G$ skewering 
$\mf{w}$. By \cite{Haglund}, we can replace $g$ with a power to ensure that it admits an axis $\g\cu X$. Let $p$ be the 
vertex of $\mf{w}$ that lies on $\g$. Let $K$ be the constant in Proposition~\ref{first properties} and define $M$ as at the 
beginning of Section~\ref{systems of switches sect}.

For every $n>2M$, let $\mscr{S}_n=(\mc{S}_n,H_n)$ be a full $n$--system of switches (its existence is guaranteed by 
Lemma~\ref{constructing systems of switches 2}). Replacing $\mscr{S}_n$ with 
\[k\cdot\mscr{S}_n:=(k\mc{S}_n,kH_nk^{-1})=(k\mc{S}_n,H_n)\] 
for some $k\in G$, we can assume that $\mf{w}\in\supp\mscr{S}_n$. Let $[p]_n\in\comp\mscr{S}_n$ denote the component that 
contains $p$.  Again replacing $\mscr{S}_n$ with $k\cdot\mscr{S}_n$ for some $k\in G_{\mf{w}}$, Lemma~\ref{large components} 
enables us to assume that $d(p,\mf{w}-[p]_n)\geq\tfrac{n}{2}-M$.

Now, Proposition~\ref{existence} guarantees the existence of a crooked hyperplane $\G_n\cu\mc{G}(\mscr{S}_n)$ that contains 
the vertex $[p]_n\in\mc{G}(\mscr{S}_n)$,  but not the entire subtree $\mc{G}(\mf{w})\cu\mc{G}(\mscr{S}_n)$. We will need the 
following observations.

\begin{lem}\label{required properties}
\begin{enumerate}
\item []
\item We have $\partial_{\infty}\mf{w}-\partial_{\infty}U(\G_n)\neq\emptyset$.
\item If $n>2(2K+M+\diam\pi_{\mf{w}}(\g))$, then $U(\G_n)$ is skewered by a power of $g$. 
\item For every open neighbourhood $\mc{V}$ of $\Lambda\mf{w}$ in $\partial_{\infty}G$,  there exists $\overline n$ such that 
$\Lambda U(\G_n)\cu\mc{V}$ for all $n\geq\overline n$.
\end{enumerate}
\end{lem}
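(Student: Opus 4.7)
Part~(2) is the easiest and will follow directly from Lemma~\ref{skewering lemma}. The preparation of $\mscr{S}_n$ preceding the lemma yields $d(p,\mf{w}-[p]_n)\geq\tfrac{n}{2}-M$; the hypothesis $n>2(2K+M+\diam\pi_{\mf{w}}(\g))$ rearranges to $\tfrac{n}{2}-M>2K+\diam\pi_{\mf{w}}(\g)$, which is precisely what is needed to invoke Lemma~\ref{skewering lemma}. Since $g$ skewers $\mf{w}\in\supp\mscr{S}_n$, the point $p$ is the unique intersection of $\g$ with $\mf{w}$, and $\G_n\ni[p]_n$, we deduce that $\G_n$, and hence $U(\G_n)$, is skewered by a power of $g$.

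For part~(1), I will exploit the flexibility built into the construction of $\G_n$ from Proposition~\ref{existence}, namely the choice of edge partitions at each type-2 vertex. At every type-2 vertex $\{\mf{w},\mf{v}\}$ in the star of $[p]_n$ (which lies in $\G_n$ by star-completeness), I will partition so that the pair of $\G_n$-edges consists of the edge to $[p]_n$ together with an edge into $\mf{v}$ on the same side of $\mf{w}$ as $[p]_n$, rather than an edge to another $\mf{w}$-component. This excludes from $\G_n$ the $\mf{w}$-component $[q]_n$ on the far side of $\mf{v}$ from $[p]_n$. Letting $\mf{h}$ be the halfspace of $X$ bounded by $\mf{v}$ containing $[q]_n$, the combination of these choices with the spacing condition in the system of switches (any two elements of $\mc{S}_n$ at distance $\geq n$) will force $U(\G_n)\cap\mf{h}$ into a bounded neighbourhood of $\mf{v}\cap\mf{w}$. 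Since $\mf{w}$ is essential, $\mf{w}\cap\mf{h}$ is unbounded, and an escape ray through $[q]_n$ converges to a point $\xi\in\partial_{\infty}\mf{w}\cap\partial_{\infty}\mf{h}$. Hyperbolicity of $X$ combined with the boundedness of $U(\G_n)\cap\mf{h}$ in the direction of $\xi$ then yields $\xi\notin\partial_{\infty}U(\G_n)$.

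Part~(3) is the most delicate and is the principal obstacle. As $n\to\infty$, the component $[p]_n\subseteq\mf{w}$ grows, with radius about $p$ at least $\tfrac{n}{2}-M$. Given $\xi\in\L U(\G_n)$ and a sequence $x_i\in U(\G_n)$ converging to $\xi$, the $K$-quasiconvexity of $U(\G_n)$ from Proposition~\ref{first properties}(3) guarantees that geodesics in $X$ from $p$ to $x_i$ stay $K$-close to standard paths in $U(\G_n)$ based at $p$. Every such standard path begins with a genuine geodesic segment of length at least $\tfrac{n}{2}-M$ lying inside $[p]_n\subseteq\mf{w}$, before any transition through a type-2 vertex to a transverse hyperplane. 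In a $\delta$-hyperbolic space, two quasi-geodesic rays whose initial segments of length $L$ agree up to bounded Hausdorff distance have endpoints at visual distance in $\partial_{\infty}X$ bounded by a function of $L$ tending to $0$. Consequently, every $\xi\in\L U(\G_n)$ lies within an arbitrarily small visual neighbourhood of $\L\mf{w}$ as $n\to\infty$, and for any open $\mc{V}\supseteq\L\mf{w}$ we have $\L U(\G_n)\subseteq\mc{V}$ once $n$ is large.

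The hard part will be part~(3): making precise the decay rate of the visual distance between the endpoint $\xi$ of a ray in $U(\G_n)$ and the closed set $\L\mf{w}$, as a function of the length of the ray's initial segment inside $\mf{w}$. Such a quantitative translation from length in $X$ to distance in $\partial_{\infty}G$ is standard in hyperbolic geometry, but the bookkeeping between the standard-path structure of $U(\G_n)$, the Morse lemma, and the cone topology on the boundary requires care.
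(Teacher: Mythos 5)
Part~(2) of your argument matches the paper exactly: the rearrangement $\tfrac{n}{2}-M>2K+\diam\pi_{\mf{w}}(\g)$ is precisely the hypothesis of Lemma~\ref{skewering lemma}, applied with the preparation $d(p,\mf{w}-[p]_n)\geq\tfrac{n}{2}-M$ from the paragraph preceding the lemma. No issues there.

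For part~(1), there is a genuine gap. You propose modifying the partitions in Proposition~\ref{existence} to exclude a particular $\mf{w}$--component $[q]_n$, and then assert that the spacing condition forces $U(\G_n)\cap\mf{h}$ into a bounded neighbourhood of $\mf{v}\cap\mf{w}$. This boundedness claim is unjustified and in fact fails in general: $\G_n$ can turn from $\mf{v}$ into a third hyperplane $\mf{u}$ transverse to $\mf{v}$ (at a type-2 vertex $\{\mf{v},\mf{u}\}$ far from $\mf{w}$, as guaranteed by the $n$-spacing), and from there re-enter $\mf{h}$ arbitrarily far from $\mf{v}\cap\mf{w}$. Nothing in your partition choice controls this. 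The paper avoids the issue entirely: $\G_n$ was \emph{already} chosen (in the sentence right before the lemma) to contain $[p]_n$ but not all of $\mc{G}(\mf{w})$. Since $\mc{G}(\mf{w})$ is a leafless subtree of the tree $\mc{T}_p$, there is a ray $r_n\cu\mc{G}(\mf{w})$ disjoint from $\G_n$, and Remark~\ref{boundaries of crooked hyperplanes} describes $\partial_\infty U(\G_n)$ explicitly as $\iota(\partial_\infty\G_n)\sqcup\bigcup_{c\in\G_n\cap\comp\mscr{S}}\partial_\infty c$; the endpoint of the standard path $\tilde r_n\cu\mf{w}$ lies in $\partial_\infty\mf{w}$ but is manifestly outside that set. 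This is much cleaner and sidesteps any claim about the geometry of $U(\G_n)\cap\mf{h}$.

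For part~(3), your idea — that rays from $p$ into $U(\G_n)$ have an initial segment of length roughly $\tfrac{n}{2}-M$ inside $\mf{w}$, and that this pushes endpoints visually close to $\partial_\infty\mf{w}$ — is the right intuition and is essentially the content of the paper's argument. The precise quantitative statement you need is Lemma~\ref{most stuff is far}: $d(p,\pi_{\mf{w}}(U(\G_n)-\mf{w}))\geq\tfrac{n}{2}-M-2K$. Combined with the fact (Remark~\ref{boundaries of crooked hyperplanes}) that points of $\partial_\infty U(\G_n)$ are endpoints of \emph{uniform} quasi-geodesic rays in $U(\G_n)$, this gives the Hausdorff convergence of $\partial_\infty U(\G_n)$ to $\partial_\infty\mf{w}$ directly, without the separate bookkeeping between initial-segment length and the cone topology that you flag as the hard part. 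You should cite that lemma instead of redoing the computation.
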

\begin{proof}
Since $\mc{G}(\mf{w})\not\subseteq\G_n$, there exists a ray $r_n$ contained in the subtree 
$\mc{G}(\mf{w})\cu\mc{G}(\mscr{S}_n)$ and disjoint from $\G_n$.  As in Remark~\ref{boundaries of crooked hyperplanes}, the 
corresponding standard path $\tilde r_n\cu\mf{w}\cu X$ is a quasi-geodesic ray defining a point of 
$\partial_{\infty}\mf{w}-\partial_{\infty}U(\G_n)$. This shows part~(1). 

Regarding part~(2), note that $d(p,\mf{w}-[p]_n)\geq\tfrac{n}{2}-M>2K+\diam\pi_{\mf{w}}(\g)$. 
By Lemma~\ref{most stuff is far}, $p$ is the only point of intersection between $\g$ and $U(\G_n)$. 
As in the proof of Corollary~\ref{skewering cor}, we conclude that a power of $g$ skewers $U(\G_n)$.

Finally, Lemma~\ref{most stuff is far} yields $d(p,\pi_{\mf{w}}(U(\G_n)-\mf{w}))\geq\tfrac{n}{2}-M-2K$, which 
diverges for $n\ra+\infty$.  Recall that, by Remark~\ref{boundaries of crooked hyperplanes}, points of 
$\partial_{\infty}U(\G_n)$ are represented by uniform quasi-geodesic rays contained in $U(\G_n)$. It follows that the limit 
sets $\partial_{\infty}U(\G_n)$ Hausdorff-converge to $\partial_{\infty}\mf{w}$ with respect to the visual metric on 
$\partial_{\infty}X$ determined by the point $p$. This proves part~(3).
\end{proof}

We are now ready to prove Theorem~\ref{infinitely many intro}.

\begin{proof}[Proof of Theorem~\ref{infinitely many intro}]
Let us assume for a moment that the theorem has been proved in the one-ended case. If $G$ is virtually 
free, then the theorem follows from Lemma~\ref{lem:virtually_free} below. If $G$ is neither virtually free nor one-ended, 
$G$ has 
at least one one-ended factor in its maximal splitting over finite subgroups and the theorem follows from 
Lemma~\ref{assembling cubulations}. It remains to handle the one-ended case.

\textbf{The one-ended case:}  Suppose now that $G$ is one-ended. Let the cubulation $G\acts X$ and the crooked hyperplanes 
$\G_n$ be those constructed above.  We denote by $\mc{H}$ the collection of abstract hyperplanes arising from the hyperplanes 
of $X$, and by $\mf{H}_n$ the abstract hyperplane determined by $U(\G_n)$. 

Let $G\acts X_n$ be the essential cubulation arising from the set ${\mc{H}\cup G\cdot\mf{H}_n}$ via 
Proposition~\ref{cubulating with abstract hyperplanes}.  Let $G\acts (X_n)_{\bullet}$ be the bald cubulation provided by 
Proposition~\ref{facts about panel collapse}. Recall that, by part~(2) of Lemma~\ref{required properties}, the hyperplane of $X_n$ 
corresponding to $\mf{H}_n$ is skewered by a power of $g$ for all large $n$. Thus, part~(2) of 
Proposition~\ref{facts about panel collapse} guarantees that a power of $g$ 
also skewers a hyperplane $\mf{u}_n\in\mscr{W}((X_n)_{\bullet})$ with $\L\mf{u}_n\cu\L U(\G_n)$.

In each bald cubulation of $G$, only finitely many $\langle g\rangle$--orbits of hyperplanes are skewered by a power of $g$.  
If $G$ admitted only finitely many bald cubulations, infinitely many limit sets $\L\mf{u}_n$ would lie in the same $\langle g 
\rangle$--orbit. There are two cases to consider. Note that $\Lambda\mf{u}_n\neq\emptyset$ for all $n$, as $G$ is one-ended.

{\bf Case~1:} \emph{there exist two diverging sequences $(a_k)$ and $(b_k)$ with the property that 
$g^{b_k}\L\mf{u}_{a_0}=\L\mf{u}_{a_k}$.}  Since a power of $g$ skewers $\mf{u}_{a_0}$, the subsets $g^{b_k}\L\mf{u}_{a_0}$ 
Hausdorff-converge to $g^+$. This contradicts part~(3) of Lemma~\ref{required properties}, as $\L\mf{u}_{a_k}\cu\L 
U(\G_{a_k})$ and $g^+\not\in\L\mf{w}$.

{\bf Case~2:} \emph{there exists a diverging sequence $(a_k)$ such that $\L\mf{u}_{a_k}$ is constant.}  Call $\Delta$ this 
subset of $\partial_{\infty}G$. Note that $\Delta\cu\bigcap_k\L U(\G_{a_k})$, which is contained in $\L\mf{w}$ by part~(3) of 
Lemma~\ref{required properties}. By minimality of $\L\mf{w}$, we have $\Delta=\L\mf{w}$. This implies that $\L\mf{w}\cu\L 
U(\G_{a_k})$, contradicting part~(1) of Lemma~\ref{required properties}.
\end{proof}

\begin{lem}\label{lem:virtually_free}
Let $G$ be a group with $\Out(G)$ infinite, and suppose that $G$ admits a proper, cocompact action on a $\CAT$ 
cube complex.  Then $G$ admits infinitely many bald cubulations, no two of which are $G$--equivariantly 
isomorphic.
\end{lem}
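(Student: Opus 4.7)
The plan is to fix any bald cubulation $\rho\colon G\acts X$ (provided by Proposition~\ref{facts about panel collapse}) and produce the desired infinite family by twisting $\rho$ by automorphisms of $G$. For $\varphi\in\Aut(G)$, the twisted action $\rho_\varphi:=\rho\circ\varphi$ has the same orbits and stabilisers as $\rho$, so it is again a proper, cocompact, essential, hyperplane-essential action on the same cube complex $X$, and hence still bald. The equivalence class $[\rho_\varphi]$ depends only on $[\varphi]\in\Out(G)$, since an inner automorphism $\text{ad}_h$ produces a cubulation equivalent to $\rho_\varphi$ via conjugation by $\rho(\varphi(h))\in\rho(G)\leq\Aut(X)$. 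So $\Out(G)$ acts on the set of equivalence classes of bald cubulations, and I aim to show the orbit of $[\rho]$ is infinite.

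Two twists $\rho_{\varphi_1}$ and $\rho_{\varphi_2}$ are $G$-equivariantly isomorphic exactly when there exists $f\in\Aut(X)$ with $f\rho(\varphi_1(g))f^{-1}=\rho(\varphi_2(g))$ for all $g$; equivalently, $f\in N:=N_{\Aut(X)}(\rho(G))$ and conjugation by $f$ on $\rho(G)$ corresponds to $\varphi_2\varphi_1^{-1}$. Let $H\leq\Out(G)$ be the image of the natural homomorphism $N\to\Aut(\rho(G))\cong\Aut(G)\to\Out(G)$. Then the stabiliser of $[\rho]$ in $\Out(G)$ is exactly $H$, and the orbit has cardinality $|\Out(G)/H|$. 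Since $\Out(G)$ is infinite, it suffices to prove $H$ is finite.

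Since $\rho(G)\leq N$ acts cocompactly on $X$, so does $N$. I would then show $N$ acts properly: once this is done, the normal inclusion $\rho(G)\lhd N$ of proper cocompact actions forces $[N:\rho(G)]<\infty$ (since $N/\rho(G)$ acts properly cocompactly on the compact orbispace $\rho(G)\backslash X$), and so $H$ is finite. For properness, fix $v\in X^{(0)}$ and consider $N_v$. Conjugation yields a homomorphism $\alpha\colon N_v\to\Aut(G)$, and the identity $f(\rho(g)v)=\rho(\alpha(f)(g))\cdot v$ shows that each $\alpha(f)$ is an isometry of the orbit distance function $d_v(g,g'):=d_X(\rho(g)v,\rho(g')v)$ on $G$. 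By the \v{S}varc--Milnor lemma, $(G,d_v)$ is quasi-isometric to $G$ with any word metric, so preserving $d_v$ exactly sends each element of a finite generating set into a bounded $d_v$-ball; as only finitely many group elements lie in any such ball, $\alpha(N_v)$ is finite. The kernel consists of $f\in\Aut(X)_v$ commuting with $\rho(G)$; such $f$ fixes the orbit $\rho(G)\cdot v$ pointwise, and $\rho(G)$-equivariance means that $f$ is entirely determined by the permutation it induces on the finite link of $v$. So $\ker\alpha|_{N_v}$ is finite as well, and $N_v$ is finite.

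The main subtlety is the rigidity statement for $\ker\alpha|_{N_v}$: an automorphism fixing only the single vertex $v$ is not determined by its action on the link, but commuting with $\rho(G)$ forces the action on the link of $v$ to propagate $G$-equivariantly to the link of every orbit point, and since the orbit is cobounded in the locally finite complex $X$ this pins the automorphism down uniquely from finitely many choices.
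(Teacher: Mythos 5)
Your overall strategy — twist a fixed bald cubulation $\rho$ by automorphisms and show that the $\Out(G)$-stabiliser of $[\rho]$ is finite — is the same as the paper's. The difference is in how the finiteness is established. The paper never introduces the normaliser $N$: given any $\phi$ with $\rho\circ\phi$ equivalent to $\rho$ via some $\iota\in\Aut(X)$, it replaces $\phi$ by an inner-equivalent automorphism for which each generator $g_i$ has its image $\phi(g_i)$ displacing a fixed basepoint by a bounded amount, and concludes directly that there are finitely many outer classes. You instead try to prove the stronger statement that $N=N_{\Aut(X)}(\rho(G))$ acts properly, deduce $[N:\rho(G)]<\infty$, and then pass to $\Out(G)$. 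This buys a cleaner structural picture, but it forces you to control the kernel of the conjugation map $\alpha\colon N_v\to\Aut(G)$, which is precisely the centraliser $C_{\Aut(X)}(\rho(G))\cap\Aut(X)_v$ — a quantity the paper's argument never needs to touch. Your finiteness proof for $\alpha(N_v)$ is correct and is in fact the same displacement-bounding computation that appears in the paper, just phrased isometrically (the appeal to \v{S}varc--Milnor is a red herring: properness of $\rho$ already gives that balls for $d_v$ are finite).

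The gap is in the claim that $\ker\alpha|_{N_v}$ is finite. You assert that an $f\in\ker\alpha|_{N_v}$, which fixes the orbit $\rho(G)\cdot v$ pointwise, is determined by its action on $\lk(v)$ because commuting with $\rho(G)$ propagates that data to every orbit point, and coboundedness then pins $f$ down. That last step does not follow: coboundedness of the orbit does \emph{not} imply that every hyperplane of $X$ is dual to an edge incident to some orbit point (take $X=\R^2$ with the standard tiling and orbit $2\Z\times2\Z$ — the hyperplane $\{x=1\}$ touches no edge at any orbit vertex). So knowing $f$ on the links of all orbit points leaves the action of $f$ on infinitely many hyperplanes unconstrained by that data alone, and the ``pins down uniquely'' assertion is not justified. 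The conclusion is nevertheless true and can be proved, for instance, as follows: by essentiality (hence baldness), every hyperplane $\mf w$ separates two orbit points $a,b$; since $f$ fixes $a$ and $b$ and permutes the finite set $\mscr W(a\,|\,b)$, the $\langle f\rangle$--orbit of $\mf w$ is finite; picking one hyperplane from each of the finitely many $\rho(G)$--orbits and using that $f$ commutes with $\rho(G)$, the action of $f$ on $\mscr H(X)$ (and hence $f$ itself, as a median automorphism fixing $v$) is determined by finitely many choices, each from a finite set. You should either supply an argument of this kind or, more in keeping with the paper, drop the detour through $N$ and argue directly that the bounded-displacement representatives $\phi$ of an outer class in the stabiliser form a finite set.
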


\begin{proof}
By Proposition~\ref{facts about panel collapse}, $G$ admits a bald cubulation $\rho\colon G\to\Aut(X)$.  Each $\phi\in\Aut(G)$ 
defines an action $\rho\circ\phi\colon G\to\Aut(X)$, which is again a bald cubulation.  For simplicity, given $g\in G$ and $x\in X$, we write 
$gx$ to mean $\rho(g)(x)$.

Let $\{x_1,\dots,x_k\}$ contain exactly one vertex of $X$ from each $\rho(G)$--orbit.  Let 
$g_1,\dots,g_m\in G$ generate $G$. Consider the constants $s=\max_jd(x_1,x_j)$ and $r=\max_id(x_1,g_ix_1)$.

Let $\phi\in\Aut(G)$ and suppose that $\rho\circ\phi$ and $\rho$ are equivalent. By definition, there is 
$\iota\in\Aut(X)$ such that $\iota(hx)=\phi(h)\iota(x)$ for all $h\in G,x\in X$.

Choose $h\in G$ so that $\iota(x_1)=hx_j$ for some $j\leq k$.  Then, for each $i\leq m$, we have 
$\phi(g_i)hx_j=\phi(g_i)(\iota(x_1))=\iota(g_ix_1)$.  So $d(\phi(g_i)hx_j,hx_j)=d(g_ix_1,x_1)\leq r$, from which the 
triangle inequality gives $d(h^{-1}\phi(g_i)hx_1,x_1)\leq r+2s$ for all $i\leq m$.  Hence we can re-choose $\phi$ in its 
outer class so that each $\phi(g_i)$ displaces $x_1$ by at most a distance depending only on $\rho$ and the (fixed) 
generating set of $G$.  There are finitely many possible choices for each $\phi(g_i)$, and hence there are only finitely 
many $\Phi\in\Out(G)$ such that $\Phi$ has a representative $\phi\in\Aut(G)$ with $\rho$ and $\rho\circ\phi$ equivalent.

Thus, if there were only finitely many equivalence classes of actions $\rho\colon G\to\Aut(X)$, we would have that $\Out(G)$ is 
finite, a contradiction.
\end{proof}

\begin{rmk}
In order to deal with general virtually free groups in the proof of Theorem~\ref{infinitely many intro}, 
one might be tempted to behave as in 
Section~\ref{subsubsec:antenna}: work in a torsion-free finite-index subgroup, and then use 
the same idea as Lemma~\ref{supergroups} to cubulate the original group.  Unfortunately, this 
does not preserve hyperplane-essentiality.
\end{rmk}

\subsection{Groups with few bald cubulations.}\label{sec:unique_bald}

In this subsection, we prove Proposition~\ref{bald intro} (cf.\ Proposition~\ref{unique bald cubulation} below) and the
following result mentioned in the introduction.

\begin{prop}\label{bald Z^n}
Let $X$ be an essential, hyperplane-essential $\CAT$ cube complex endowed with a proper, cocompact action of $\Z^n$.
Then $X$ is isomorphic to the standard tiling of $\R^n$.
\end{prop}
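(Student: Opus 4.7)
The plan is to combine the Cubical Flat Torus Theorem of Woodhouse--Wise with cocompactness to trap $X$ in a bounded neighbourhood of a canonical copy of $\R^n$, and then use essentiality to force equality. Since $\Z^n$ is torsion-free and acts properly (hence freely) on $X$, the Cubical Flat Torus Theorem~\cite{WW} supplies a $\Z^n$--invariant convex subcomplex $F\subseteq X$ that is cubically isomorphic to the standard tiling of $\R^n$, with $\Z^n$ acting by the standard integer translations. Since $\Z^n$ acts cocompactly on both $X$ and $F$, there is a constant $R\geq 0$ such that every point of $X$ lies within $\ell_1$--distance $R$ of $F$.

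The task then reduces to showing $F=X$. Because $F$ is convex and hence a full subcomplex, this is equivalent to $F^{(0)}=X^{(0)}$: any vertex $v\in X^{(0)}\setminus F^{(0)}$ can be separated from its gate-projection $\pi_F(v)\in F$ by a hyperplane of $X$, and such a hyperplane is necessarily disjoint from $F$. So it suffices to show that every hyperplane of $X$ meets $F$.

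The main step is to suppose for a contradiction that some hyperplane $\mf w$ satisfies $\mf w\cap F=\emptyset$. Then $F$ lies entirely in one of the two associated halfspaces, say $F\cu\mf h^*$, while $\mf h$ is disjoint from $F$. Fix any vertex $x_0\in X^{(0)}$. By essentiality of $\Z^n\acts X$, for each $k\geq 0$ one finds $g_k\in\Z^n$ with $g_kx_0\in\mf h$ and $d(g_kx_0,\mf w)\geq k$. On the other hand, $g_kx_0\in X$ is within $\ell_1$--distance $R$ of some vertex $y_k\in F\cu\mf h^*$; any combinatorial path from $g_kx_0\in\mf h$ to $y_k\in\mf h^*$ crosses $\mf w$, which forces $R\geq d(g_kx_0,y_k)\geq d(g_kx_0,\mf w)\geq k$. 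Taking $k>R$ yields the desired contradiction.

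I do not anticipate substantive obstacles in this plan: the argument uses only essentiality and cocompactness together with the Cubical Flat Torus Theorem, bypassing entirely the de Rham and Chatterji--Fern\'os--Iozzi machinery exploited in the proof of Proposition~\ref{bald intro}. Hyperplane-essentiality, despite appearing in the hypotheses, does not seem to play any role in this particular argument.
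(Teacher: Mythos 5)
The invocation of the Cubical Flat Torus Theorem is where the argument breaks. What \cite[Theorem~3.6]{WW} actually provides --- and what the paper's proof relies on --- is a $\Z^n$--invariant convex subcomplex $Y\cu X$ splitting as a product of \emph{quasi-lines} $C_1\x\cdots\x C_n$, not a copy of the standard tiling of $\R^n$. A quasi-line is merely a $\CAT$ cube complex quasi-isometric to $\R$, and there are plenty of essential, proper, cocompact $\Z$--actions on quasi-lines that are not $\R$: take, for instance, an infinite string of $2$--cubes glued along opposite vertices, with $\Z$ acting by translation. Were the theorem to give $\R^n$ directly, the proposition would hold without assuming hyperplane-essentiality, and that example would be a counterexample. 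The fact that hyperplane-essentiality appeared unused in your argument was the warning sign that something had been over-claimed.

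Your remaining steps are fine and, in spirit, match the paper's compressed ``Since the $\Z^n$--action is essential, we have $Y=X$'': cocompactness traps $X$ in a bounded neighbourhood of the invariant convex subcomplex $Y$, essentiality then rules out any hyperplane of $X$ disjoint from $Y$, and the gate-projection argument upgrades this to $Y=X$. But this only yields $X\cong C_1\x\cdots\x C_n$ with the $C_j$ quasi-lines, and the missing --- and essential --- half of the proof is identifying each $C_j$ with $\R$. Each $C_j$ inherits essentiality and hyperplane-essentiality as a de Rham factor of $X$. In an essential quasi-line, every hyperplane $\mf w$ is bounded (since each of its two halfspaces must be unbounded and hence must capture one of the two ends, leaving $\mf w$ nowhere to escape to infinity). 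A bounded hyperplane of positive dimension is itself an inessential cube complex, so hyperplane-essentiality forces every hyperplane of $C_j$ to be a point, i.e.\ $\dim C_j=1$. Thus $C_j$ is a tree quasi-isometric to $\R$, and essentiality of the cocompact $\Z$--action then forces $C_j\cong\R$.
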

\begin{proof}
By the Cubical Flat Torus theorem \cite[Theorem~3.6]{WW}, there exists an invariant convex subcomplex $Y\cu X$ 
that splits as product of quasi-lines $C_1,\dots,C_n$. Since the $\Z^n$--action is essential, we have $Y=X$. 
Since $X$ is essential and hyperplane-essential, so is each $C_j$. Every hyperplane of an essential quasi-line is bounded. 
Thus, since each $C_j$ is hyperplane-essential, it follows that $C_j\cong\R$. In conclusion, $X\cong\R^n$.
\end{proof}

Before proving Proposition~\ref{unique bald cubulation}, we need to obtain a couple of lemmas.

\begin{lem}\label{median sub-algebras}
Let $G$ act cocompactly on a bald cube complex $X$ with no $\R$--factors. If $A\cu X^{(0)}$ is a $G$--invariant, nonempty median subalgebra, then $A=X^{(0)}$.
\end{lem}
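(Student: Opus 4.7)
The plan is to argue by contradiction: assume $A \subsetneq X^{(0)}$ and find a vertex outside $A$ that is a median of three elements of $A$, contradicting the subalgebra hypothesis.

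First, I would show that every hyperplane of $X$ is cut by $A$, equivalently that the convex hull $\hat{A}$ of $A$ in $X$ equals $X$. The hull $\hat{A}$ is $G$-invariant and convex; if it were proper, it would lie in some halfspace $\mf{h}$. Essentiality of $X$ (a consequence of baldness) provides $g \in G$ skewering the bounding hyperplane $\mf{w}$, and after replacing $g$ by $g^{-1}$ if needed, $g\mf{h} \subsetneq \mf{h}$. Then $\hat{A} = g^n \hat{A} \subseteq g^n\mf{h}$ for all $n \geq 0$; any vertex of $\bigcap_{n} g^n\mf{h}$ would be separated from $\mf{w}$ by each of the nested hyperplanes $g\mf{w}, g^2\mf{w}, \ldots$, hence lie at infinite distance from $\mf{w}$, which is impossible. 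So $\hat{A} = X$, and in particular $A$ meets every halfspace of $X$.

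Next, suppose $A \subsetneq X^{(0)}$ and choose $v \in X^{(0)} \setminus A$ with $d(v,A)$ minimal. Stepping from $v$ along a geodesic to a nearest vertex of $A$ yields some $a \in A$ adjacent to $v$; let $\mf{w}$ be the hyperplane of the edge $\{a,v\}$, with halfspaces $\mf{h} \ni a$ and $\mf{h}^* \ni v$. A direct check gives, for any $a_2, a_3 \in X^{(0)}$,
\[ \mu(a, a_2, a_3) = v \iff a_2, a_3 \in \mf{h}^* \text{ and } \mscr{W}(a|a_2) \cap \mscr{W}(a|a_3) = \{\mf{w}\}. \]
Finding such $a_2, a_3 \in A$ would force $v \in A$, contradicting $v \notin A$. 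Any $a_2 \in A \cap \mf{h}^*$ (which exists by the previous step) works; writing $\mf{h}_a(\mf{k})$ for $a$'s side of a hyperplane $\mf{k}$, the condition on $a_3$ becomes $a_3 \in A \cap C$, where
\[ C := \mf{h}^* \cap \bigcap_{\mf{k} \in \mscr{W}(a|a_2) \setminus \{\mf{w}\}} \mf{h}_a(\mf{k}) \]
is a non-empty convex subcomplex (it contains $v$).

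The crux is therefore to show $A \cap C \neq \emptyset$, and this is the main obstacle. The plan is to establish that $A$ meets every non-empty intersection of finitely many halfspaces of $X$. When $X$ is irreducible, iterating the Corner property (Proposition~\ref{corner}) shows that every such intersection contains a halfspace of $X$, which $A$ meets by the previous step. When $X$ is reducible, the de Rham decomposition (Proposition~\ref{prop:de_rham}) writes $X = X_1 \times \cdots \times X_m$ with each $X_i$ irreducible, bald, and without $\R$-factors; I would apply the irreducible case factorwise to show $\pi_i(A) = X_i^{(0)}$, then reassemble an $a_3 \in A \cap C$ using that $A$ is a median subalgebra of the product. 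Baldness and the no-$\R$-factor hypothesis enter essentially: they guarantee irreducibility of each de Rham factor and rule out degenerate configurations (e.g.\ strongly parallel hyperplanes) that could prevent $A$ from meeting $C$. Once $a_3$ is produced, $\mu(a, a_2, a_3) = v \in A$ yields the desired contradiction.
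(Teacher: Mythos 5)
The setup (convex hull $\widehat A = X$, hence $A$ meets every halfspace; the characterisation $\mu(a,a_2,a_3)=v$ iff $a_2,a_3\in\mf h^*$ and $\mscr W(a|a_2)\cap\mscr W(a|a_3)=\{\mf w\}$; the reduction to finding $a_3\in A\cap C$) is all correct. The problem is the step you yourself flag as the crux: showing $A\cap C\neq\emptyset$.

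Your proposed route is to prove that $A$ meets every nonempty finite intersection of halfspaces, by iterating Proposition~\ref{corner} to find a halfspace inside such an intersection. This fails for two reasons. First, the statement ``$A$ meets every nonempty finite intersection of halfspaces'' is not an intermediate step --- in a locally finite cube complex every vertex $v$ \emph{is} the intersection of the finitely many halfspaces whose hyperplanes are adjacent to $v$ and which contain $v$, so this statement is literally equivalent to $A=X^{(0)}$. You have reformulated the lemma, not reduced it. Second, the iterative Corner argument cannot possibly produce a halfspace inside such an intersection: a singleton $\{v\}$ is a nonempty finite intersection of halfspaces and contains no halfspace at all. Proposition~\ref{corner} only says that the intersection of \emph{two transverse} halfspaces contains a halfspace; once you pass to three or more halfspaces, replacing two of them by a smaller halfspace $\mf k$ can make the remaining intersection $\mf k\cap\mf h_3\cap\cdots$ empty, so the induction does not close. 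In particular nothing stops $C$ from being exactly $\{v\}$, which would make $A\cap C\neq\emptyset$ the original problem again. There is also a tell-tale sign that something is off: your outline uses baldness and the no-$\R$-factor hypothesis only as a vague safety net (``rule out degenerate configurations''), whereas in fact these hypotheses must do decisive work --- the conclusion is false without them (e.g.\ for a subdivision of $\R$, the even vertices form a proper $\Z$-invariant subalgebra).

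The paper's proof sidesteps all of this by arguing at the level of halfspaces rather than vertices. One shows the restriction map $r_A\colon\mscr H(X)\to\mscr H(A)$, $\mf h\mapsto\mf h\cap A$, is a bijection: surjectivity is Bowditch's Lemma~6.5 in \cite{Bow1} (every wall of a median subalgebra is induced by a hyperplane of the ambient complex); injectivity is where baldness enters, because cocompactness makes $A\cap\mf h$ coarsely dense in $\mf h$, so two distinct halfspaces with the same trace on $A$ would be at finite Hausdorff distance, i.e.\ strongly parallel, which baldness together with the absence of $\R$--factors forbids. A bijection on halfspaces then forces $A=X^{(0)}$. If you want to salvage your vertex-level approach, you would still need some such halfspace-level input to control $C$, and at that point you may as well run the halfspace argument directly.
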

\begin{proof}
Since the action $G\acts X$ is essential, every halfspace of $X$ 
intersects $A$ nontrivially. We obtain a $G$--equivariant map $r_A\colon\mscr{H}(X)\ra\mscr{H}(A)$ that takes each halfspace 
of $X$ to its intersection with $A$. By Lemma~6.5 in \cite{Bow1}, this map is surjective. By Lemma~\ref{cocompact 
hyperplanes}, every halfspace of $X$ is at finite Hausdorff distance from its intersection with $A$. By definition, no two 
halfspaces of $X$ are at finite Hausdorff distance, so the fibres of $r_A$ are singletons. We conclude that $r_A$ is a 
bijection, hence $A=X^{(0)}$.
\end{proof}

\begin{lem}\label{automatically non-elementary}
Let $G$ be a group such that no finite-index subgroup of $G$ admits a nontrivial additive homomorphism to 
$\R$.
Let $G$ act cocompactly on a proper, 
unbounded $\CAT$ space $\mc{X}$. Then every $G$--orbit 
in the visual boundary $\partial_{\infty}\mc{X}$ is infinite.
\end{lem}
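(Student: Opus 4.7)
The plan is to argue by contradiction. Suppose some $\xi\in\partial_\infty\mc{X}$ has finite $G$--orbit $\{\xi_1,\dots,\xi_n\}$; the strategy is to assemble the Busemann functions at the $\xi_i$ into an additive homomorphism $F\colon G\to\R$, and then to use cocompactness to show that $F$ is nontrivial, contradicting the hypothesis on $G$.

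First I fix a basepoint $x_0\in\mc{X}$ and let $b_i\colon\mc{X}\to\R$ denote the Busemann function at $\xi_i$ normalised so that $b_i(x_0)=0$. I will need the standard transformation rule
\[b_{g\eta}(y)=b_\eta(g^{-1}y)-b_\eta(g^{-1}x_0)\]
for every isometry $g$ of $\mc{X}$, which follows at once from the definition $b_\eta(y)=\lim_t[d(y,\gamma_\eta(t))-t]$ together with the observation that the unique geodesic ray from $gx_0$ to $g\eta$ is $g\circ\gamma_\eta$. Using this rule and the fact that each $g\in G$ permutes $\{\xi_1,\dots,\xi_n\}$, a short computation yields $F(gh)=F(g)+F(h)$ for $F(g):=\sum_{i=1}^n b_i(g^{-1}x_0)$. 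Hence $F$ is an additive homomorphism $G\to\R$, which by hypothesis must vanish identically.

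The second step is to derive a contradiction from $F\equiv 0$. Set $S:=\sum_i b_i$; then $S$ vanishes on the orbit $G\cdot x_0$, which is $R$--dense in $\mc{X}$ for some $R<\infty$ by cocompactness. Since $S$ is $n$--Lipschitz (each $b_i$ being $1$--Lipschitz), this forces $S$ to be bounded on all of $\mc{X}$. On the other hand, along a geodesic ray $\gamma$ from $x_0$ asymptotic to $\xi_1$ one has $b_1\circ\gamma(t)=-t\to-\infty$ linearly, while the remaining $b_i\circ\gamma$ are convex, $1$--Lipschitz, and possess well-defined asymptotic slopes in $[-1,1]$. A careful comparison of these slopes, exploiting properness and unboundedness of $\mc{X}$, will force $S\circ\gamma$ to be unbounded, yielding the required contradiction.

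I expect the main obstacle to be exactly this last slope-comparison step: a priori the asymptotic slopes of the $b_i\circ\gamma$ can cancel exactly, so the hypothesis on $G$ must be used decisively to rule such a degeneracy out --- most likely by showing that any balanced asymptotic configuration of Busemann slopes produces a further nontrivial $\R$--valued character of $G$, possibly after first passing to a finite-index subgroup and invoking an averaging or transfer argument.
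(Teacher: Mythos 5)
Your homomorphism $F(g)=\sum_i b_i(g^{-1}x_0)$ is correctly constructed, the computation $F(gh)=F(g)+F(h)$ is right, and you correctly deduce that $F\equiv 0$ forces $S=\sum_i b_i$ to be bounded on $\mc X$. But the obstacle you flag at the end is not a technicality to be finessed away: it is a genuine obstruction, and no slope-comparison argument can overcome it. Consider $G=D_\infty=\Z\rtimes\Z/2$ acting by translations and a reflection on $\mc X=\R$. Then $\mc X$ is proper, unbounded and $\CAT$, the action is cocompact, $\Hom(D_\infty,\R)=0$ (the reflection $b$ has $\phi(b)=0$, and $bab^{-1}=a^{-1}$ forces $\phi(a)=0$), yet the orbit of $+\infty$ is the two-point set $\{\pm\infty\}$. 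Here $b_1(x)=-x$, $b_2(x)=x$, so $S\equiv 0$ and $F\equiv 0$: the slopes really do cancel exactly. In other words, the lemma as literally stated is false, and your $F$ correctly detects nothing because there is nothing to detect under the stated hypothesis.

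The paper's proof takes a different and shorter route: it passes to the finite-index subgroup $G_0=\mathrm{Stab}_G(\xi)$ and uses the single Busemann character $\phi\colon G_0\to\R$, $\phi(g)=b_\xi(gx)-b_\xi(x)$, which (once it is known to vanish) confines $G_0\cdot x$ to a horosphere, contradicting cocompactness. This avoids your summation and hence the cancellation problem entirely. However, concluding $\phi\equiv 0$ requires that $\Hom(G_0,\R)=0$ for this finite-index subgroup $G_0$, which does not follow from $\Hom(G,\R)=0$ in general (again $D_\infty\supset\Z$), so the paper's proof, like yours, really uses a strengthened hypothesis — namely that \emph{every} finite-index subgroup of $G$ has no nontrivial $\R$-valued character. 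In the one place the lemma is invoked (Proposition~\ref{unique bald cubulation}), this stronger property does hold for $\G_0$, because every finite-index subgroup of $\G_0$ is again a uniform lattice with dense projections (simplicity of the $U_i$ rules out proper finite-index open subgroups), and Shalom's theorem applies to it as well. So: to fix your write-up, replace the final ``slope comparison'' paragraph by a reduction to $G_0$ and the single character $\phi$, and note explicitly that the hypothesis needed is $\Hom(G_0,\R)=0$ for finite-index $G_0\leq G$, not merely $\Hom(G,\R)=0$.
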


\begin{proof}
By our assumptions, the visual boundary $\partial_{\infty}\mc{X}$ is nonempty.  Suppose for the sake of 
contradiction that $G$ has a finite orbit in $\partial_\infty\mc{X}$, so a 
finite-index  subgroup $G_0\leq G$ fixes a point $\xi\in\partial_{\infty}\mc{X}$.

Let 
$b_{\xi}\colon\mc{X}\ra\R$ be any 
Busemann function determined by $\xi$. Given any $x\in \mc{X}$, the map $\phi\colon G_0\ra\R$ defined by 
$\phi(g)=b_{\xi}(gx)-b_{\xi}(x)$ is easily seen to be an additive homomorphism. By our assumption on $G$, the map $\phi$ must 
vanish identically. Hence $G_0$ leaves invariant each horosphere around $\xi$, contradicting cocompactness of 
$G_0\acts\mc{X}$.
\end{proof}

\begin{prop}\label{unique bald cubulation}
For $i=1,2$, let $T_i$ be locally finite trees with all vertices of degree $\geq 3$. Let $U_i\leq\Aut(T_i)$ be closed,  
locally primitive subgroups generated by edge stabilisers and satisfying Tits' independence property. Then, for any 
uniform lattice $\G\leq U_1\x U_2$ with dense projections to $U_1$ and $U_2$, 
the standard action $\G\acts T_1\x T_2$ is the only bald cubulation of $\G$.
\end{prop}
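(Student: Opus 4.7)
The plan is to take an arbitrary bald cubulation $\Gamma\acts X$, provided by Proposition~\ref{bald exist intro}, and show that it must be equivalent to the standard action on $T_1\times T_2$. I would first apply the de Rham decomposition (Proposition~\ref{prop:de_rham}) to write $X=\prod_{i=1}^m X_i$ with each $X_i$ irreducible, and pass to a finite-index subgroup $\Gamma_0\leq\Gamma$ that preserves every factor. Each induced action $\Gamma_0\acts X_i$ is then cocompact, essential, and hyperplane-essential, and $X_i$ inherits baldness. Using Tits' simplicity machinery for the ``plus part'' of $U_i$ \cite{Tits} together with the cohomological results of \cite{Shalom00}, I would rule out nontrivial homomorphisms $\Gamma_0\to\R$, so no $X_i$ can be isomorphic to $\R$. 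Combined with Lemma~\ref{automatically non-elementary}, this ensures that each action $\Gamma_0\acts X_i$ is non-elementary.

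With these hypotheses in place, I would apply the Chatterji--Fern\'os--Iozzi superrigidity theorem \cite{CFI} to each action $\Gamma_0\acts X_i$. This produces a continuous extension of the $\Gamma_0$--action to an action of $U_1\times U_2$ on $X_i$; irreducibility of $X_i$ together with the denseness of both projections of $\Gamma$ then force this extension to factor through one of the projections $U_1\times U_2\to U_{j(i)}$. The Caprace--de Medts theorem \cite{Caprace-DeMedts} now applies to show that the hyperplanes of $X_i$ are compact for the continuous cocompact $U_{j(i)}$--action. Hyperplane-essentiality of $\Gamma_0\acts X_i$ together with cocompactness of the induced action of each hyperplane-stabiliser on its (compact) hyperplane then imply that the hyperplane must be a single vertex, since an essential action on a compact $\CAT$ cube complex is only possible on a point. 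Hence each $X_i$ is a locally finite tree; essentiality and baldness further force every vertex to have degree at least three.

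Since $\Gamma\acts X$ is faithful and both projections of $\Gamma$ to $U_1$ and $U_2$ are dense, both $U_1$ and $U_2$ must act nontrivially on $X$, so exactly two irreducible factors $X_1,X_2$ appear, one acted on through each $U_j$. Lemma~1.4.7 of \cite{BMZ} then identifies the $\Gamma$--equivariant product $X_1\times X_2$ with $T_1\times T_2$, concluding that $\Gamma\acts X$ is equivalent to the standard action on $T_1\times T_2$.

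The main obstacle will be verifying the precise hypotheses of the Chatterji--Fern\'os--Iozzi theorem for the actions $\Gamma_0\acts X_i$, namely non-elementarity (handled via Lemma~\ref{automatically non-elementary}), irreducibility of the lattice (which follows from denseness of the projections), and the integrability/unitarity assumptions on $\Gamma$. This is precisely where the combination of Tits' simplicity results and Shalom's cohomology vanishing is essential. A secondary subtle point is guaranteeing that the continuous extension factors through a single factor $U_{j(i)}$ rather than diagonally; this should follow from irreducibility of $X_i$ together with the dense-projections hypothesis, but will need to be argued carefully to invoke the Caprace--de Medts result in the intended form.
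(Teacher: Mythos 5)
Your outline follows the same strategy as the paper's proof: de~Rham decomposition, passing to a finite-index subgroup $\Gamma_0$, Shalom's vanishing to rule out $\R$--factors and to apply Lemma~\ref{automatically non-elementary}, CFI superrigidity to extend the $\Gamma_0$--action on each factor to one of the $U_i$, Caprace--de~Medts to force compact hyperplanes, hyperplane-essentiality to force trees, and the BMZ lemma to identify the trees with $T_1,T_2$. However, there is a genuine gap in the way you invoke Chatterji--Fern\'os--Iozzi.

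You assert that their superrigidity theorem ``produces a continuous extension of the $\Gamma_0$--action to an action of $U_1\times U_2$ on $X_i$.'' But the version of Theorem~1.5 in \cite{CFI} that produces a continuous extension to an action on the cube complex itself is not correct as stated (the paper records this explicitly in a footnote, pointing to Section~4 of \cite{Fio3}). The correct statement only yields a continuous extension of the action on a $\Gamma_0$--invariant \emph{median subalgebra} $A_j\subseteq X_j^{(0)}$, which a priori can be a proper subalgebra. This matters: Caprace--de~Medts applies to actions on the ambient cube complex, so one first needs $A_j=X_j^{(0)}$. Closing this gap is precisely the role of Lemma~\ref{median sub-algebras}, and that lemma is exactly where baldness (specifically, the absence of halfspaces at finite Hausdorff distance once the $\R$--factors are ruled out) is used in an essential way. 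Your argument records that ``each $X_i$ inherits baldness'' but never uses this; as written, you only use essentiality and hyperplane-essentiality, which is not enough to guarantee $A_j=X_j^{(0)}$. So as stated the proposal has a real hole at the superrigidity step.

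A second, minor imprecision: your claim that faithfulness of $\Gamma\acts X$ and density of both projections already force exactly two de~Rham factors with each $U_j$ acting on a distinct one is not justified as written. The paper does not argue this at the stage you do; it is absorbed into the final identification via Lemma~1.4.7 of \cite{BMZ} (and could alternatively be obtained from quasi-flat rank, as noted in a footnote). This is a lesser issue, but you would need to spell it out.
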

\begin{proof}
Let $\G\acts X$ be a bald cubulation.  Let ${X_1\x \dots\x X_k}$ be the de Rham decomposition\footnote{We 
immediately have $k=2$ by quasi-flat rank considerations, but this is not necessary in the proof.} of $X$ and let 
$\G_0\leq\G$ be a finite-index subgroup leaving each factor invariant. Each $X_j$ is a locally finite, bald cube complex 
endowed with a cocompact $\G_0$--action. 

Observe that $U_1$ and $U_2$ are simple groups by the argument in \cite{Tits} (see e.g.\ Theorem~3.3 in 
\cite{Caprace-DeMedts}).  Theorem~0.8 in \cite{Shalom00} thus implies that every additive homomorphism ${\G_0\ra\R}$ vanishes 
identically, and the same holds for any finite-index subgroup of $\Gamma_0$. Lemma~\ref{automatically 
non-elementary} then yields that there are no finite $\G_0$--orbits in the visual 
boundaries $\partial_{\infty}X_j$.

Again by simplicity, $U_i$ has no finite-index open subgroups, so the projection of $\G_0$ to $U_i$ is dense.  By Theorem~1.5 
in \cite{CFI}, each action $\G_0\acts X_j$ extends to a continuous action of some $U_{i_j}$ on a $\G_0$--invariant median 
subalgebra\footnote{The stronger statement appearing as Theorem~1.5 in \cite{CFI} is not true in general. See Section~4 of 
\cite{Fio3} for a discussion (in particular Theorem~4.4 and Example~4.7).} $A_j\cu X_j^{(0)}$. By Lemma~\ref{median 
sub-algebras}, we actually have $A_j=X_j^{(0)}$.

Observe that each $U_{i_j}\acts X_j$ is cocompact and essential, since so is the $\G_0$--action.  Thus, 
hyperplane-stabilisers are proper, open subgroups of $U_{i_j}$ and they act cocompactly on the respective hyperplanes by 
Lemma~\ref{cocompact hyperplanes}. Theorem~A in \cite{Caprace-DeMedts} shows that all hyperplane-stabilisers of $U_{i_j}\acts 
X_j$ are compact, which means that all hyperplanes of each $X_j$ are compact. Since $X_j$ is hyperplane-essential, it must be 
a tree. Finally, by Lemma~1.4.7\footnote{This appears as Lemma~3.7 in the unpublished version of the article available 
online.} in \cite{BMZ}, $X_j$ must be $U_{i_j}$--equivariantly isomorphic to $T_{i_j}$. We conclude that $k=2$ and that $X$ 
is $\G$--equivariantly isomorphic to $T_1\x T_2$.
\end{proof}

\bibliography{mybib}
\bibliographystyle{alpha}

\end{document}